\documentclass{siamart0516}



\usepackage{lipsum}
\usepackage{amsfonts}
\usepackage{amsmath}
\usepackage{amssymb}
\usepackage{graphicx}
\usepackage{xcolor}
\usepackage{epstopdf}
\usepackage{algorithmic}
\usepackage{dsfont}
\usepackage{mathtools}
\usepackage{multirow}
\ifpdf
  \DeclareGraphicsExtensions{.eps,.pdf,.png,.jpg}
\else
  \DeclareGraphicsExtensions{.eps}
\fi

\newcommand{\TheTitle}{Existence of Weak Solutions for a pseudo-parabolic system Coupling Chemical Reactions, Diffusion and Momentum Equations}
\newcommand{\TheAuthors}{A. J. Vromans, A. A. F. van de Ven and A. Muntean}
\newcommand{\ShortTitle}{Exist. of weak sol. for coupled parab. and pseudo-parab. sys.}
\headers{\ShortTitle}{\TheAuthors}

\title{{\TheTitle}\thanks{Submitted to the editors February 7th, 2017.
\funding{This work was funded by the Netherlands Organization for Scientific Research (NWO) under contract no.~NWO-MPE 657.000.004.}}}

\author{
  Arthur J. Vromans$^{\dag\ddag}$
  \and
  Fons van de Ven\thanks{Centre for Analysis, Scientific Computing and Applications (CASA), Technische Universiteit Eindhoven, Eindhoven, The Netherlands
    (\email{a.j.vromans@tue.nl}, \email{A.A.F.v.d.Ven@tue.nl}).}
  \and
  Adrian Muntean\thanks{Department of Mathematics and Computer Science, Karlstads Universitet, Karlstad, Sweden (\email{adrian.muntean@kau.se}).}
}

\usepackage{amsopn}


\ifpdf
\hypersetup{
  pdftitle={\TheTitle},
  pdfauthor={\TheAuthors}
}
\fi




\DeclareMathOperator*{\esssup}{ess\,sup}
\newsiamthm{assumption}{Assumption}
\newsiamthm{claim}{Claim}
\crefname{assumption}{Assumption}{Assumptions}
\crefname{claim}{Claim}{Claims}
\begin{document}

\maketitle

\begin{abstract}
 We study the weak solvability of a nonlinearly coupled system of parabolic and pseudo-parabolic equations describing the interplay between mechanics, chemical reactions, diffusion and flow in a mixture theory framework. Our approach relies on suitable discrete-in-time energy-like estimates and discrete Gronwall inequalities. In selected parameter regimes, these estimates ensure the convergence of the Rothe method for the discretized partial differential equations.
\end{abstract}

\begin{keywords}
  System of nonlinear parabolic and pseudo-parabolic equations,  reaction-diffusion, weak solutions, existence, Rothe method.
\end{keywords}

\begin{AMS}
  Primary, 35A01, 35K51; Secondary, 35D30, 35K70, 74D05, 74F10, 74F20, 74F25
\end{AMS}
\section{Introduction}$\;$\\
\label{s: sec1}
 We investigate the existence of weak solutions to a system of partial differential equations coupling chemical reaction, momentum transfer and diffusion, cast in the framework of mixture theory \cite{Bowen1980}. We use the Rothe method \cite{Kacur1985,Rothe1984} as main tool. For simplicity, we restrict ourselves to a model with a single non-reversible chemical reaction in a one-dimensional bounded spatial domain $[0,1]$ enclosed by unlimited (or instantly replenished) reservoirs of the reacting chemicals. The chemical reaction is of the $N$-to-$1$-type with the reacting chemicals consisting out of solids and a single fluid, while the produced chemical is a solid. New mathematical challenges arise due to the strong nonlinear coupling between all unknowns and their transport fluxes. \\

Evolution systems, in which chemical reactions, momentum transfer, diffusion and stresses interplay, occur practically in every physical or biological system where there is enough knowledge to describe completely the balances of masses and forces; see e.g. \cite{Chabaud2012,Eden2016,Fatima2014,Piatniski2015}. In all these situations, the interest lies in capturing  the effects flows have on deformations, deformations and chemical reactions on structures, and structures on chemical reactions and flow. In biology, such a system is used, for instance, to better understand and eventually forecast the plant growth and development \cite{Piatniski2015}. In structural engineering, one wants to delimit the durability of a concrete sample exposed to ambiental corrosion, for example sulfate attack in sewer pipes \cite{Fatima2014}. Our initial interest in this topic originates from mathematical descriptions of sulfate corrosion \cite{Bohm1998}. We have realized that the mathematical techniques used for a system describing sulfate attack [when within a porous media (concrete) sulfuric acid reacts with slaked lime to produce gypsum], could be equally well applied to some more general systems sharing similar features (e.g. types of flux couplings and nonlinearities).\\

At a general level, the system outlined in this paper is a combination of parabolic equations  of diffusion-drift type with production terms by chemical reactions and pseudo-parabolic stress equations based on viscoelastic terms. On their own, both  parabolic equations (cf. e.g. \cite{Evans2010,Lady1967,LaxMilgram1954}, and pseudo-parabolic equations (see e.g. \cite{BohmShowalter1985,FanPop2013,Ford1976,PtashnykPHD,Ptashnyk2007,Showalter1970}) are well-understood from mathematical and numerical analysis perspectives. However, coupling these objects leads to systems of equations with a less understood structure. Many systems in the literature seem similar to ours at a first glance. A coupling remotely resembling our case appears in \cite{AbelsLiu2016}, but with different nonlinear terms, others like those in \cite{AbelsLiu2016,Chabaud2012} do not have the pseudo-parabolic part, \cite{Eden2016,Fatima2014} refer to a different domain situation, while in  \cite{Piatniski2015} higher-order derivatives are involved.\\

Due to the strong coupling present in our system, we chose to investigate in this paper the simplest case: a one-dimensional bounded domain, benefiting this way of an easier control of the nonlinearities by relying on the embedding $H^1\hookrightarrow L^\infty$ within a decoupling strategy of the model equations inspired by the method of Rothe. The study of the multidimensional case will be done elsewhere.\\

We apply our techniques to a general system introduced in \Cref{s: sec2}, which covers e.g. mathematical models describing sulfate attack on concrete. In this section, we also introduce a set of assumptions based on which the existence of weak solutions can be proven. In our setting, the parabolic equations contain only coupling terms consisting of time-derivative terms of the unknowns of the pseudo-parabolic equations, while the pseudo-parabolic equations contain only coupling terms through Lipschitz-like non-linearities coupling back to the parabolic part of the system. In \Cref{s: sec3}, we apply a time discretization decoupling the evolution system, inspired by the method of Rothe,  such that the Lipschitz functions are evaluated at a different time-slice than the unknowns involved in the pseudo-parabolic part.
The decoupled pseudo-parabolic equations can now be solved given the solution of the parabolic system posed at the previous time slice, while the new parabolic part can be solved with the just obtained solution of the pseudo-parabolic equations. The discrete-in-time a priori energy-like estimates are derived  in \Cref{s: sec4} by testing the discretized system with suitable functions leading to quadratic terms and then by applying the discrete Gronwall lemma to the resulting quadratic inequalities. Based on these a-priori estimates, we show in \Cref{s: sec5}  that our assumptions stated in  \Cref{s: sec2} are  valid in certain parameter regions. Furthermore, based on our a-priori estimates, we prove in \Cref{s: sec6} that the linear interpolation functions of the solutions to the discrete system converge strongly to a weak solution of the original system.

\section{Description of the system}$\;$\\
\label{s: sec2}
We define our system on a time-space domain $[t_0,T]\times[0,1]$, where $t_0$ is the initial time and $T$ is the final time defined at a later stage by \Cref{a: ass3}. The unknowns of our system are two vector functions, $\phi: ([t_0,T]\times[0,1])^d\rightarrow\mathbf{R}^d$ and $w: ([t_0,T]\times[0,1])^{d-1}\rightarrow\mathbf{R}^{d-1}$, and one scalar function $v: [t_0,T]\times[0,1]\rightarrow\mathbf{R}$. The vector $\phi$ consists of the volume fractions of the $d$ chemical components active in a chemical reaction mechanism of redox type. The vector $w$ refers to the displacements of the mixture components with respect to a reference coordinate system. The scalar function $v$ denotes the velocity of the single chemical fluid. We identify the different components of the vectors with the different chemicals and use the following convention. The subscript 1 is related to the produced chemical. The subscript $d$ is related to the fluid. All the other subscripts are related to the remaining solid chemicals. The unknowns interplay in the following system of the evolution equations:  For $l\in\{1,\ldots,d\}$, $m\in\{1,\ldots,d-1\}$, we have
\begin{subequations}
\begin{align}
\partial_t\phi_l-\delta_l\partial_z^2\phi_l+I_l(\phi)\partial_z\left(\Gamma(\phi)v\right)+\sum_{m=1}^{d-1}\sum_{i,j=0}^1\partial_z^i\left(B_{lijm}(\phi)\partial_t^jw_m\right)&=G_{\phi,l}(\phi)\label{eq: sys1}\\
\partial_z\left(\Gamma(\phi)v\right)+\sum_{m=1}^{d-1}\sum_{j=0}^1\partial_z\left(H_{jm}(\phi)\partial_t^jw_m\right)&=G_{v}(\phi)\label{eq: sys2}\\
\partial_tw_m-D_m\partial_z^2w_m-\gamma_m\partial_z^2\partial_tw_m+F_m(\phi)v&\label{eq: sys3}\\
+\sum_{j=1}^{d-1}\sum_{\substack{i+n=0\\i,n\geq0}}^1\partial_z\left(E_{minj}(\phi)\partial_z^i\partial_t^nw_j\right)&=G_{w,m}(\phi)\nonumber
\end{align}
\end{subequations}
with functions $I_l,\,\Gamma,\,B_{lijm},\,H_{jm},\,F_m,\,E_{minj},\,G_{\phi,l},\,G_v,\,G_{w,m}\in W^{1,\infty}\!\!\left((0,1)^d\right)$ and\\
constants $\delta_l,\,D_m,\,\gamma_m\in\mathbf{R}_+$. Furthermore, abuse notation with $\|f(\cdot)\|_{W^{1,\infty}\left((0,1)^d\right)}\leq f\in\mathbf{R}_+$. Notice that this system must satisfy the conditions $\sum_{l=1}^d\phi_l=1$, the fundamental equation of fractions, which allows for the removal of the $l=d-1$ component of \Cref{eq: sys1}.\\
We assume the volume fractions are insulated at the boundary, thus implying $\partial_z\phi=0$ at the boundaries $z=0$ and $z=1$. The boundary at $z=0$ is assumed to be fixed, while the boundary at $z=1$ has a displacement $W(t) = h(t)-1$, where $h(t)$ is the height of the reaction layer at the present time $t$. The Rankine-Hugoniot relation, see e.g. \cite{Muntean2015}, states that the velocity of a chemical at a boundary is offset from the velocity, $U$, of the boundary by influx or outflux of the chemical, i.e.
\begin{equation}
\begin{dcases}
\phi_m\left(U-\partial_tw_m\right)\cdot\hat{n} = \hat{J}_m\mathcal{L}\left(\phi_{m,res}-\phi_m\right)\cr
\,\,\quad\phi_d\left(U-v\right)\cdot\hat{n} = \hat{J}_d\mathcal{L}\left(\phi_{d,res}-\phi_d\right)
\end{dcases}\nonumber
\end{equation}
In general the function $\mathcal{L}(\cdot)$ denotes the concentration jump across the boundary. However, we assume the boundary to be semi-permeable in such a way that only influx can occur. Hence $\mathcal{L}(f):=f\mathcal{H}(f)= f_+$ denotes the positive part of $f$. Furthermore, we assume that the fluid reservoir is at the boundary $z=1$: $\phi_{d,res}$ is positive at $z>1$, but 0 at $z<0$. The produced chemical does not have any reservoir at the boundaries. Therefore $\phi_{1,res}=0$ at both $z<0$ and $z>1$. The other chemicals have a reservoir below the $z=0$ boundary: $\phi_{m,res}$ is positive at $z<0$ and 0 at $z>1$ for $1<m<d$. We generalize the Rankine-Hugoniot relations by replacing $\phi_m$ with $H_{1m}(\phi)$ and $\phi_d$ with $\Gamma(\phi)$.\\
The influx due to the Rankine-Hugoniot relations shows that the displacement $\left.w_m\right|_{z=1}$ will not be equal to the boundary displacement $W(t)$. This will result in stresses, which we incorporate with a Robin boundary condition at these locations \cite[section 5.3]{ViFlFl99}. Collectively for all $t\in[t_0,T]$ these boundary conditions are
\begin{subequations}
\begin{align}
&\left\{\begin{array}{rll}
\left.\partial_z\phi_l\right|_{z=0}&=0,&l\neq d-1\cr
\left.\partial_z\phi_l\right|_{z=1}&=0,&l\neq d-1
\end{array}\right.\label{eq: phiBC}\\
&\left\{\begin{array}{rll}
\left.w_1\right|_{z=0}&=0\cr
\left.\partial_zw_1\right|_{z=1} &= A_1\left(\left.w_1\right|_{z=1}-W(t)\right)\cr
\left.H_{1m}(\phi)\partial_tw_m\right|_{z=0} &= \hat{J}_m\mathcal{L}\left(\phi_{m,res}-\left.\phi_m\right|_{z=0}\right),&1<m<d\cr
\left.\partial_zw_m\right|_{z=1} &= A_m\left(\left.w_m\right|_{z=1}-W(t)\right),&1<m<d\cr
\left.v\right|_{z=0}&=0\cr
\quad\left.\Gamma(\phi)\left(\partial_th(t)-v\right)\right|_{z=1}&= \hat{J}_d\mathcal{L}\left(\phi_{d,res}-\left.\phi_d\right|_{z=1}\right)
\end{array}\right.\label{eq: rankhugBC}
\end{align}
\end{subequations}
It is worth noting that  in the limit $|A_m|\rightarrow\infty$ one formally obtains the Dirichlet boundary conditions, which are the natural boundary conditions for this system from a physical perspective.\\
The initial conditions describe a uniform and stationary equilibrium solution at $t=t_0$:
\begin{equation}
\phi_l(t_0,z) = \phi_{l0}\qquad\text{and}\qquad w_m(t_0,z)=0\quad\text{for all }z\in[0,1].\label{eq: init}
\end{equation}
The collection of \Cref{eq: sys1,eq: sys2,eq: sys3,eq: phiBC,eq: rankhugBC,eq: init} forms our continuous system. Notice that this continuous system needs the pseudo-parabolic terms: our existence method (of weak solutions) fails when we choose $\gamma_m=0$. Moreover, the height function $h(t)$ in \Cref{eq: rankhugBC} cannot be chosen freely. Integration of \Cref{eq: sys2} in both space and time together with \Cref{eq: rankhugBC} will yield an ODE of $h(t)$, and, in special cases, a closed expression of $h(t)$. Furthermore, the initial conditions \cref{eq: init} do not contain a description of $v(t_0,z)$, because this function can be calculated explicitly: \Cref{eq: init} together with \Cref{eq: sys2,eq: sys3} define a subsystem of the variables $\frac{\partial w_m}{\partial t}(t,z),\, v(t,z)$ on $(t,z)\in\{t_0\}\times(0,1)$ with boundary conditions of \Cref{eq: rankhugBC}. This subsystem can be written in the form on to which \Cref{t: strong1} can be applied, which shows that there exists a unique solution of this subsystem in $(C^2(0,1))^d$ iff both $\Gamma(\phi(t_0,z))\neq 0$ and $H_{1m}(t_0,0)\neq0$ are satisfied.\\
$\;$\\
Our existence proof relies on the following set of assumptions:
\begin{assumption}\label{a: ass1}
$d\geq2$, $\hat{J}_d,\hat{J}_m\geq0$ and $\phi_{d,res},\phi_{m,res}\in[0,1]$ for $1\leq m<d$ and\\ \vphantom{a}\qquad\quad $\sum_{l=1}^d\phi_{l,res}=1$.\\
\vphantom{a}\qquad\quad All chemical reactions, where $d-1$ chemicals react to form 1 chemical, are\\ \vphantom{a}\qquad\quad allowed. The reacting solid chemicals ($1<m<d$) flow into the domain from\\ \vphantom{a}\qquad\quad a reservoir at $z<0$, while the reacting liquid chemical flows into the domain\\ \vphantom{a}\qquad\quad from a reservoir at $z>1$. Moreover, the reservoir volume fractions are in the\\ \vphantom{a}\qquad\quad physical range $[0,1]$.
\end{assumption}
\begin{assumption}\label{a: ass4}
For all $0\!<\!\alpha\!<\!1/d$ introduce $\mathcal{I}_\alpha$ as $(\alpha,1-(d-1)\alpha)$. Then\\
\vphantom{a}\qquad\quad$\Gamma_{\alpha}:=\inf\limits_{\phi\in\mathcal{I}_\alpha^d}\Gamma(\phi)\in\mathbf{R}_+$ and $H_{\alpha}:=\min\limits_{1\leq m<d}\inf\limits_{\phi\in\mathcal{I}_\alpha^d}H_{1m}(\phi)\in\mathbf{R}_+$.\\
\vphantom{a}\qquad\quad The velocity $v(t,z)$ is now guaranteed to be bounded if the other velocities $\partial_tw$\\
\vphantom{a}\qquad\quad are bounded. Moreover, $v(t_0,z)$ is now given by \Cref{t: strong1}.
\end{assumption}
Next to these assumptions we have additional, which will be introduced pointwise at the appropriate moment. The pointwise introduced assumptions only list necessary conditions at that moment. If a more stringent condition is needed, then a new assumption will be introduced. For completeness, we list here the reasons for introducing the additional assumptions in their most stringent form.\\
\Cref{a: ass5} is guaranteeing the pseudo-parabolicity. \Cref{a: ass7} guarantees that the initial volume fractions are physical and non-zero. \Cref{a: ass9} guarantees that a $L^2(t_0,T;H^1(0,1))$ bound for $v^k$ can be found. \Cref{a: ass6} gives the necessary upper bound for the time discretization time step $\Delta t$, for which \Cref{a: ass3} can be proven.\\
$\,$\\
Next to these assumptions, we want our solutions to be physical at almost every time $t$. So, the volume fractions must lie in $[0,1]$ and the velocity $v(t)$ must be both essentially bounded and of bounded variation, which implies that both $v$ and $\partial_zv$ must be functions in the Bochner space $L^2(t_0,T;L^2(0,1))$. However, the volume fractions $\phi$ cannot become $0$ without creating problems for the original Rankine-Hugoniot boundary conditions or allowing singularities in the domain implying $\phi\in((0,1))^d$. To this end, we introduce a new time interval for which all of these constraints hold and we claim that such an interval exists.
\begin{claim}\label{a: ass3}
 There exists a time domain $[t_0,T]$, a velocity $V>0$ and a volume\\
  \vphantom{a}\qquad\qquad\qquad fraction $\phi_{\min}\in(0,1/d)$ such that $$\qquad\qquad\qquad T = \inf\left\{t\left|\begin{array}{rl}\int_{t_0}^t\int_0^1|v(s,z)|^2\mathrm{d}z\mathrm{d}s&>V^2,\cr
    \int_{t_0}^t\int_0^1\left|\partial_z v(s,z)\right|^2\mathrm{d}z\mathrm{d}s&>V^2,\cr
    \inf\limits_{z\in(0,1)}\left\{\min\limits_{1\leq l\leq d}\phi_l(t,z)\right\}&<\phi_{\min}\end{array}\right.\right\},$$ \qquad \qquad\qquad which, in line with \Cref{a: ass1}, implies\\
    $$\qquad\qquad\qquad\phi_l(t,z)\in\overline{\mathcal{I}_{\phi_{\min}}}=\left[\phi_{\min},1-(d-1)\phi_{\min}\right]\subset(0,1)$$
    \vphantom{a}\qquad\qquad\qquad for all $(t,z)\in[t_0,T]\times[0,1]$ and $1\leq l\leq d$.
\end{claim}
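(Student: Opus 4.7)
The plan is to establish $T > t_0$ by a short-time continuity argument that exploits the strict inequalities available at the initial time. By the initial condition \cref{eq: init}, $\phi_l(t_0,z) = \phi_{l0}$ is spatially constant, and the physical admissibility of the initial data (a hypothesis that will be pinned down in \Cref{a: ass1} and the later assumption on initial volume fractions) guarantees $\phi_{l0} \in (0,1)$ with $\sum_l \phi_{l0}=1$. I would pick $\phi_{\min} \in (0, \min_l \phi_{l0})$, shrinking if necessary to keep $\phi_{\min} < 1/d$. Then at $t=t_0$ the third condition defining the ``bad'' set strictly fails, because $\inf_{z}\min_{l}\phi_l(t_0,z) = \min_l \phi_{l0} > \phi_{\min}$; the first two conditions trivially fail at $t = t_0$ for any $V>0$ since both integrals reduce to zero. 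The upper bound $\phi_l \leq 1-(d-1)\phi_{\min}$ then follows automatically from $\sum_l\phi_l = 1$ together with the lower bound applied to the remaining $d-1$ components, giving the interval $\overline{\mathcal{I}_{\phi_{\min}}}$ stated in the claim.

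Next I would invoke the regularity of the weak solution being constructed. The a priori energy estimates of \Cref{s: sec4}, once passed to the limit in the Rothe scheme of \Cref{s: sec3,s: sec6}, yield $\phi \in C([t_0,T^\ast]; H^1(0,1))^d$ on some provisional interval $[t_0,T^\ast]$, and via the one-dimensional embedding $H^1(0,1)\hookrightarrow C([0,1])$ this gives joint continuity of $(t,z)\mapsto\phi_l(t,z)$. Moreover, $v\in L^2(t_0,T^\ast; H^1(0,1))$, so the maps $t\mapsto\int_{t_0}^t \|v(s)\|_{L^2}^2\,\mathrm{d}s$ and $t\mapsto\int_{t_0}^t \|\partial_z v(s)\|_{L^2}^2\,\mathrm{d}s$ are absolutely continuous and vanish at $t_0$. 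Combining these, for any fixed $V>0$ there exist $\varepsilon_1,\varepsilon_2>0$ such that the two integrals remain below $V^2$ on $[t_0,t_0+\varepsilon_1)$ and $\phi_l(t,z)\geq\phi_{\min}$ on $[t_0,t_0+\varepsilon_2)\times[0,1]$. Setting $\varepsilon=\min(\varepsilon_1,\varepsilon_2)>0$, none of the three defining conditions fires on $[t_0,t_0+\varepsilon)$, so $T\geq t_0+\varepsilon > t_0$; by \Cref{a: ass4} this simultaneously forces $\Gamma(\phi), H_{1m}(\phi)>0$ throughout $[t_0,T]\times[0,1]$, which is exactly what the claim promises.

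The main obstacle is a mild circularity: the regularity of $\phi$ and $v$ just invoked is what the Rothe construction produces, and that construction's uniform bounds rest on $\phi_l \geq \phi_{\min}$ and $\Gamma(\phi)\geq\Gamma_{\phi_{\min}}>0$ holding throughout the interval. One therefore has to run the discrete scheme on a \emph{tentative} interval $[t_0,T']$ and verify by self-consistency that the quantitative bounds from the discrete-in-time energy estimates on $\|v\|_{L^2((t_0,T')\times(0,1))}$, $\|\partial_z v\|_{L^2((t_0,T')\times(0,1))}$, and on the $L^\infty_{t,z}$ modulus of continuity of $\phi_l - \phi_{l0}$ can all be made strictly smaller than the respective thresholds $V^2$, $V^2$, and $\min_l\phi_{l0}-\phi_{\min}$ by taking $T'-t_0$ sufficiently small. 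Since those bounds vanish as $T'-t_0 \to 0$, a positive-length $T'$ exists; taking $T$ as the supremum of admissible $T'$ yields the stopping time of the claim with $T > t_0$. The technical care is in keeping the constants appearing in the discrete energy inequalities (which depend on $\Gamma_{\phi_{\min}}$, $H_{\phi_{\min}}$ and the $W^{1,\infty}$-bounds of the nonlinearities) uniform in the time-step $\Delta t$, so that the continuity argument survives the limit $\Delta t\to 0$.
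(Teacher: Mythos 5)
Your overall strategy---exploit strict interiority at $t=t_0$ and shrink $T-t_0$ so that a bootstrap/self-consistency argument closes---captures part of what the paper does, but the claim at the heart of your resolution of the circularity is false, and fixing it requires two of the paper's most substantive inputs that your proof omits.

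You write that the quantitative a priori bounds on $\|v\|_{L^2(Q)}^2$, $\|\partial_z v\|_{L^2(Q)}^2$, and the $L^\infty$ modulus of $\phi-\phi_0$ ``vanish as $T'-t_0\to 0$,'' so a self-consistent short interval exists. For the velocity this is not true. The discrete estimate bounding $\sum_{j=0}^k\|\partial_z v^j\|_{L^2}^2\Delta t$ (the function $Q_{\Delta t}(x,y)$ in \Cref{s: app0} and \Cref{l: 1Vbound}, with $x=T_{\Delta t}-t_0$ and $y=V$) has, at $x=0$, the form $Q_0\Delta t + Q_1 V^2 + Q_2 V^4\exp(Q_3 V^2)$. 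This does \emph{not} tend to zero as $x\downarrow 0$ for fixed $V$: the term $Q_1 V^2$ persists, because $v^k$ is reconstructed from \Cref{eq: disc2} in terms of $\mathcal{D}_{\Delta t}^k(w_m)$, whose a priori bound itself contains $V^2$ via $\tilde{\mathcal{C}}^2$. The self-consistency $Q_{\Delta t}(0,V)\le V^2$ can therefore only close if the feedback coefficient satisfies $Q_1<1$; this is precisely \Cref{a: ass8} (equivalently \Cref{a: ass9}), and the paper must also restrict $V$ to the interval $(y^{**},y^{***})$ constructed in \Cref{l: 1Vbound}. Without such a structural smallness condition on the coefficients, no choice of $T'-t_0$ or $\Delta t$ makes the argument close, and the claim fails.

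A second, milder gap concerns the $\phi$-constraint. Your plan is to control $\|\phi_l(t)-\phi_{l0}\|_{L^\infty}$ and make it smaller than the slack $\min_l\phi_{l0}-\phi_{\min}$. This is in spirit what the paper does via \Cref{l: constraint}, where the lower bound $\phi_{\alpha_d}\ge\phi_{\min}$ is derived from upper bounds $\sum_{j\neq d}\|\phi_{\alpha_j}^k\|_{L^\infty}\le 1-\phi_{\min}$ across all cyclic permutations $\alpha$. But the a priori $H^1$ bounds on the $\phi_l^k$ start, at $T-t_0=0$, from $\sum_{j}\phi_{\alpha_j 0}^2$, so ``strict interiority'' of $\phi_0$ is needed \emph{quantitatively} --- namely $\sum_{j\neq d}\phi_{\alpha_j 0}^2 < \tfrac12\left(\tfrac{1-\phi_{\min}}{C_\infty}\right)^2$ for every $\alpha$. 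That is a genuine geometric restriction on the initial data (the Steinmetz-solid condition of \Cref{a: ass7}, analysed in \Cref{l: tint}), not something automatic from $\phi_{l0}\in(0,1)$. Your write-up treats it as a consequence of \Cref{a: ass1} plus picking $\phi_{\min}$ small, but for $d>2$ and $C_\infty=\sqrt{2}$ the condition is nontrivial and can fail even for physically admissible $\phi_0$.

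In short: the ``shrink $T$'' reflex handles the terms that scale with $T-t_0$, but both the velocity and the $\phi$-constraints contain $T$-independent parts, and it is exactly those parts that force \Cref{a: ass7,a: ass8,a: ass9}. The paper's proof (via $P_\alpha$, $Q_{\Delta t}$ and \Cref{l: tint,l: 1Vbound,l: 1intersect}) is structured to expose and control these $T$-independent obstructions; your proposal does not.
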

This claim guarantees that all chemicals are omnipresent, while velocities and deformations remain bounded. This claim can be related to the theories on parabolic and pseudo-parabolic equations. The claim, which will be proven in \Cref{s: sec5}, mimics the necessary $L^2(0,1)$ and $H^1(0,1)$ regularity of the coefficients in the parabolic and pseudo-parabolic equation theory. Combining the claim with \Cref{a: ass4} directly introduces the constants $\Gamma_{\phi_{\min}}$ and $H_{\phi_{\min}}$ respectively as lower bounds of $\Gamma(\phi)$ and all $H_{1m}(\phi)$ for $\phi\in\overline{\mathcal{I}_{\phi_{\min}}}^d$.\\
$\;$\\
In this paper we shall prove the existence of a volume fraction $\phi_{\min}\in(0,1/d)$, a velocity $V>0$ and a nonempty time interval $[t_0,T]$ such that a weak solution of the continuous system exists if \Cref{a: ass1,a: ass4,a: ass5,a: ass6,a: ass7,a: ass9} are satisfied. We will prove this statement in 4 steps:
 \begin{itemize}
 \item First, we discretise the continuous system in time with a regular grid of step size $\Delta t$, apply a specific Euler scheme and prove that this new discretised system can be solved iteratively in the classical sense at each time slice.
 \item Second, we make a weak version of the discretised system and prove that there exists a weak solution of the continuous system. This will be done by choosing specific test functions such that we obtain quadratic inequalities. By application of Young's inequality and using the Gronwall lemmas we obtain the energy-like estimates called the a priori estimates, which are step size $\Delta t$ independent upper bounds of the Sobolev norms of the weak solutions.
 \item Third, we prove \Cref{a: ass3} by showing that the upper bounds of the a priori estimates are increasing functions of $T-t_0$ and $V$ that have to satisfy specific upper bounds in order to guarantee the validity of \Cref{a: ass3}. Then, in certain parameter regions, regions in $(T-t_0,V)$-space exist for which \Cref{a: ass3} holds.
 \item Fourth, we will introduce temporal interpolation functions $\overline{u}(t) = u^{k}$ and $\hat{u}(t) = u^{k-1}+(t-t_{k-1})\mathcal{D}_{\Delta t}^k(u)$ to justify the existence of functions on $[t_0,T]\times[0,1]$ and we will show that the weak limit for $\Delta t\downarrow0$ exists and is a weak solution of the continuous system.
 \end{itemize}
\section{The discretised system and its classical solution}$\;$\\
\label{s: sec3}
We discretise time with regular temporal grid $t_k = t_0+k\Delta t$ for $\Delta t>0$. This discretization is applied to \Cref{a: ass3} with the infimum replaced with a minimum, the time $t$ replaced with the discrete time $t_k$ and the time integrals replaced with Riemann sums. Consequently, the time $T$ is now dependent on the discretization. To highlight this fact we introduce the notation $T_{\Delta t}$ for the time $T$ of \Cref{a: ass3} with regular temporal grid $t_k = t_0+k\Delta t$.\\
$\;$\\
We discretise the continuous system in such a way that the equations become linear elliptic equations with respect to evaluation at time $t_k$, and only contain two time evaluations: at $t_k$ and $t_{k-1}$. The time derivative $\partial_tu$ is replaced with the standard first order finite difference $\mathcal{D}_{\Delta t}^k(u):=\frac{u^k-u^{k-1}}{\Delta t}$, where $u^k(z) := u(t_k,z)$. The discretised system takes the form
\begin{subequations}
\begin{align}
\mathcal{D}_{\Delta t}^k(\phi_l)-\delta_l\partial_z^2\phi_l^k+I_l(\phi^{k-1})\partial_z\left(\Gamma(\phi^{k-1})v^{k-1}\right)\label{eq: disc1}\\
+\sum_{m=1}^{d-1}\sum_{i=0}^1\partial_z^i\left(B_{li0m}(\phi^{k-1})w_m^{k-1}+B_{li1m}(\phi^{k-1})\mathcal{D}_{\Delta t}^k(w_m)\right)&=G_{\phi,l}(\phi^{k-1})\cr
\sum_{m=1}^{d-1}\partial_z\left(H_{0m}(\phi^{k-1})w_m^{k-1}+H_{1m}(\phi^{k-1})\mathcal{D}_{\Delta t}^k(w_m)\right)\label{eq: disc2}\\
+\partial_z\left(\Gamma(\phi^{k-1})v^k\right)&=G_{v}(\phi^{k-1})\cr
\mathcal{D}_{\Delta t}^k(w_m)-D_m\partial_z^2w_m^k-\gamma_m\partial_z^2\mathcal{D}_{\Delta t}^k(w_m)+F_m(\phi^{k-1})v^{k-1}\label{eq: disc3}\\
+\sum_{j=1}^{d-1}\sum_{i=0}^1\partial_z\left(E_{mi0j}(\phi^{k-1})\partial_z^iw_j^{k-1}+E_{m01j}(\phi^{k-1})\mathcal{D}_{\Delta t}^k(w_j)\right)&=G_{w,m}(\phi^{k-1})\nonumber
\end{align}
\end{subequations}
with boundary conditions
\begin{subequations}
\begin{align}
&\left\{\begin{array}{rll}
\left.\partial_z\phi_l^k\right|_{z=0}&=0,&l\neq d-1\quad\cr
\left.\partial_z\phi_l^k\right|_{z=1}&=0,&l\neq d-1\quad
\end{array}\right.\label{eq: phiBCD}\\
&\left\{\begin{array}{rl}
\left.H_{1m}(\phi^{k-1}|_{z=0})\mathcal{D}_{\Delta t}^k(w_m)\right|_{z=0}\!\!\!&= \hat{J}_m\mathcal{L}\left(\phi_{m,res}-\left.\phi_m^{k-1}\right|_{z=0}\right)\cr
\left.\partial_zw_m^k\right|_{z=1}\!\!\!&= A_m\left(\left.w_m^k\right|_{z=1}-\mathcal{W}^k\right)\cr
\left.v^k\right|_{z=0}\!\!\!&=0\cr
\quad\left.\Gamma\left(\phi^{k-1}|_{z=1}\right)\left(\mathcal{D}_{\Delta t}^k(\mathcal{W})-v^{k-1}\right)\right|_{z=1}\!\!\!&= \hat{J}_d\mathcal{L}\left(\phi_{d,res}-\left.\phi_d^{k-1}\right|_{z=1}\right)\quad,
\end{array}\right.\label{eq: rankhugBCD}
\end{align}
\end{subequations}
where $\mathcal{W}^k:=W(t_k)$, and $\hat{J}_1=0$ in accordance with the first boundary condition of \cref{eq: rankhugBC}, and initial conditions \cref{eq: init}.\\
A powerful property of this discretised system is its sequential solvability at time $t_k$: the existence of a natural hierarchy in attacking this problem. First we obtain results for \Cref{eq: disc3}, then we use these results to obtain similar results for both \Cref{eq: disc1,eq: disc2}. Moreover, the structure of the discretised system is that of an elliptic system. Hence, the general existence theory for elliptic systems can be extended directly to cover our situation:
\begin{theorem}
\label{t: strong1}
Let $\Omega\subset\mathbf{R}$ bounded, $A_\pm,B_\pm,C_\pm\in M(\mathbf{R}^n,\mathbf{R}^n)$, $D$ a diagonal positive definite matrix of size $n\times n$, and $E_{ij},f_i\in (L^2(\Omega))^n$ for all $i,j\in\{1,\ldots,n\}$. There exists a unique solution $u\in (C^2(\Omega))^n$ of the system
\begin{equation}
\begin{dcases}\partial_z^2u-\partial_z(E(z)u)-Du=f &\text{ on }(x_-,x_+)=:\Omega\cr
A_\pm u(x_\pm)+B_\pm u'(x_\pm)= C_\pm.&
\end{dcases}\nonumber
\end{equation}
Moreover, if the conditions
\begin{equation}
\left|\begin{array}{cc}
A_++B_+E(x_+)&B_+\cr
A_-+B_-E(x_-)&B_-
\end{array}\right|\neq0\qquad\text{ and }\qquad\min_{z\in\Omega}|\mathrm{Tr}\left(E(z)\right)|>0\nonumber
\end{equation}
are satisfied, then the solution $u$ is given by
\begin{multline}
\begin{pmatrix}u(z)\cr U(z)\end{pmatrix} = \Psi(z)\int_{x_-}^z\Psi(s)\begin{pmatrix}0\cr f(s)\end{pmatrix}\mathrm{d}s+\begin{pmatrix}
A_++B_+E(x_+)&B_+\cr
A_-+B_-E(x_-)&B_-
\end{pmatrix}^{-1}\begin{pmatrix}C_+\cr C_-\end{pmatrix}\cr
+\begin{pmatrix}
A_++B_+E(x_+)&B_+\cr
0&0
\end{pmatrix}\Psi(x_+)\int_{x_-}^{x_+}\Psi(s)\begin{pmatrix}0\cr f(s)\end{pmatrix}\mathrm{d}s\nonumber
\end{multline}
with
\begin{equation}
U(z) = \partial_zu-E(z)u,\nonumber
\end{equation}
and with $\Psi(z)\in M(\mathbf{R}^n,\mathbf{R}^n)$ for all $z\in\Omega$ satisfying
\begin{equation}
\begin{dcases}
\partial_z\Psi(z) = \begin{pmatrix}
E(z)&\mathbb{I}_n\cr
D^{-1}&0
\end{pmatrix}\Psi(z) &\text{ for } z\in\Omega\cr
\,\,\Psi(x_-)= \mathbb{I}_n&
\end{dcases}\nonumber
\end{equation}
\end{theorem}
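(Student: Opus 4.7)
The plan is to reduce the second-order elliptic boundary value problem to a first-order linear ODE system of size $2n$ via the auxiliary variable $U(z):=\partial_z u(z)-E(z)u(z)$. A short calculation using the equation yields $\partial_z U=Du+f$ together with $\partial_z u=U+E(z)u$, so that $Y:=(u,U)^T$ satisfies a linear system $\partial_z Y=M(z)Y+g(z)$ with a block matrix $M(z)$ matching the one that defines $\Psi$ in the statement (up to a probable typographical convention on $D$ versus $D^{-1}$) and with $g=(0,f)^T$. The Robin boundary conditions translate cleanly into $(A_\pm+B_\pm E(x_\pm))u(x_\pm)+B_\pm U(x_\pm)=C_\pm$, which already exhibits the block structure appearing in the hypothesis.

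Next I would invoke standard linear ODE theory on a bounded interval to construct the fundamental matrix $\Psi$ as the unique solution of the matrix initial value problem with $\Psi(x_-)=\mathbb{I}_{2n}$; $\Psi$ is invertible throughout $\Omega$ by Liouville's formula. Variation of parameters then expresses every solution of the first-order system as $Y(z)=\Psi(z)Y(x_-)+\Psi(z)\int_{x_-}^z\Psi(s)^{-1}g(s)\,ds$, reducing the problem to determining the $2n$ scalars in $Y(x_-)$.

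Third, I would impose the two boundary conditions jointly. The condition at $x_-$ gives the vector equation $(A_-+B_-E(x_-))u(x_-)+B_-U(x_-)=C_-$, while the condition at $x_+$, after propagating $Y(x_+)=\Psi(x_+)Y(x_-)+\Psi(x_+)\int_{x_-}^{x_+}\Psi(s)^{-1}g(s)\,ds$ into the corresponding relation at $x_+$, yields a second vector equation in $Y(x_-)$. Stacking these produces a $2n\times 2n$ linear system whose coefficient matrix is precisely the block matrix appearing in the hypothesis. The non-vanishing determinant assumption then delivers a unique $Y(x_-)$, and substituting back recovers the closed form for $u$ displayed in the theorem. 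Existence and uniqueness of a classical solution follow because the first-order system has a unique continuous solution, and the regularity $u\in C^2$ is read off \emph{a posteriori} from $\partial_z^2 u=\partial_z(Eu)+Du+f$ together with smoothness of $E$ and $f$ (in the $L^2$ setting, in the natural weak sense that makes the formula meaningful).

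The main obstacle I expect is the algebraic bookkeeping of the third step: matching particular-solution contributions against propagated boundary data so that the precise expression stated in the theorem emerges, including the somewhat asymmetric-looking correction term built from $\Psi(x_+)\int_{x_-}^{x_+}\Psi(s)g(s)\,ds$ and the seemingly non-standard appearance of $\Psi(s)$ rather than $\Psi(s)^{-1}$ inside the integrals. A related subtlety is the role of the trace condition $\min_{z\in\Omega}|\mathrm{Tr}\,E(z)|>0$: it is not needed for existence and uniqueness alone, so I would hunt for the exact step at which it becomes essential, presumably to guarantee invertibility of an intermediate boundary block or to exclude a degeneracy that would make the explicit representation ill-defined.
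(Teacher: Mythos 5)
Your proposal follows exactly the route the paper indicates: rewrite the second-order equation as a first-order $2n$-system in $(u,U)$ with $U=\partial_z u-Eu$, build the fundamental matrix $\Psi$, apply variation of parameters, and fix the $2n$ free constants from the stacked boundary conditions, whose $2n\times2n$ coefficient block is precisely the determinant appearing in the hypothesis; the paper's own proof is a one-line sketch of this same reduction citing Polyanin--Zaitsev and Evans. The discrepancies you flagged are genuine: the block matrix generating $\Psi$ should carry $D$ (not $D^{-1}$) since $\partial_z U=Du+f$, the variation-of-parameters integrand should carry $\Psi(s)^{-1}$ rather than $\Psi(s)$, and the trace condition is not actually used anywhere in the argument (Liouville's formula already gives $\det\Psi(z)=\exp\!\bigl(\int_{x_-}^z\mathrm{Tr}\,E\bigr)\neq0$ unconditionally), so these appear to be typographical or redundant artifacts in the theorem statement rather than gaps in your reasoning.
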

\begin{proof}
See \cite[p.130]{PolZait1995} for the general calculus result or see Chapter 6 of \cite{Evans2010} for the elliptic theory result. In specific: rewrite system in terms of $u$ and $U$. This system has a fundamental matrix $\Psi(z)$ yielding the above solution after satisfying boundary conditions.
\end{proof}
\begin{corollary}\label{c: strongdisc}
Let $\Delta t>0$. For all $t_0<t_k$ such that $\phi^{k-1}\in(0,1)^d$ there exists a unique solution $u^k:=(\phi^k,w^k,v^k)\in\left(C^2(0,1)\right)^{2d-1}\times C^1(0,1)$ of the system \cref{eq: disc1,eq: disc2,eq: disc3} with boundary conditions \cref{eq: phiBCD,eq: rankhugBCD} and initial conditions \cref{eq: init}.
\end{corollary}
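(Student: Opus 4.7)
The plan is to establish the three unknowns $(w^k, v^k, \phi^k)$ in sequence, exploiting the decoupling structure that the Euler scheme was designed to produce. In Step 1, treating $(\phi^{k-1}, w^{k-1}, v^{k-1})$ as data, \Cref{eq: disc3} is a linear second-order ODE system for $w^k = (w_1^k, \ldots, w_{d-1}^k)$ alone: expanding $\mathcal{D}_{\Delta t}^k(w_m) = (w_m^k - w_m^{k-1})/\Delta t$ and collecting the $w^k$-contributions,
\[
\frac{1}{\Delta t}\, w_m^k - \Bigl(D_m + \frac{\gamma_m}{\Delta t}\Bigr)\partial_z^2 w_m^k + \sum_{j=1}^{d-1}\partial_z\!\Bigl(\frac{E_{m01j}(\phi^{k-1})}{\Delta t}\, w_j^k\Bigr) = \tilde{f}_m(z),
\]
while the bottom Rankine--Hugoniot boundary relation in \Cref{eq: rankhugBCD} can be divided by $H_{1m}(\phi^{k-1}|_{z=0}) > 0$---strict positivity follows from $\phi^{k-1}\in (C^2([0,1]))^d$ having compact image in $(0,1)^d$ together with \Cref{a: ass4}---to produce a genuine Dirichlet condition on $w_m^k(0)$. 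Dividing the bulk equation by $(D_m + \gamma_m/\Delta t)$ then places the system in the normal form required by \Cref{t: strong1}, with diagonal positive-definite zeroth-order matrix $D = \diag((D_m\Delta t + \gamma_m)^{-1})$, so the theorem delivers a unique $w^k \in (C^2(0,1))^{d-1}$.

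With $w^k$ fixed, Step 2 reduces \Cref{eq: disc2} to the first-order scalar ODE $\partial_z(\Gamma(\phi^{k-1})v^k) = g(z)$ with $g$ known and continuous; together with the boundary condition $v^k(0)=0$ and the strict positivity $\Gamma(\phi^{k-1}(z)) \geq \Gamma_{\phi_{\min}} > 0$ from \Cref{a: ass4}, direct quadrature produces $v^k(z) = \Gamma(\phi^{k-1}(z))^{-1}\int_0^z g(s)\,\mathrm{d}s \in C^1(0,1)$. In Step 3, \Cref{eq: disc1} reduces---for each $l \neq d-1$---to the uncoupled Neumann problem $\phi_l^k/\Delta t - \delta_l\partial_z^2 \phi_l^k = \tilde h_l(z)$ with $\partial_z\phi_l^k|_{z=0,1} = 0$, a second direct application of \Cref{t: strong1} with $E = 0$ and $D = 1/(\delta_l\Delta t) > 0$; the remaining component $\phi_{d-1}^k$ is recovered from the conservation identity $\sum_l \phi_l^k \equiv 1$ and inherits $C^2$-regularity automatically.

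The main obstacle is Step 1: recasting the coupled vector system for $w^k$, with its first-order coupling through the tensor $E_{m01j}(\phi^{k-1})$ and its mixed Dirichlet/Robin boundary data, precisely into the template of \Cref{t: strong1}. The induced coefficient $E(z) = [E_{m01j}(\phi^{k-1}(z))/(D_m\Delta t+\gamma_m)]_{mj}$ must be shown to lie in $L^2(0,1)$ with the required regularity, which is immediate from $\phi^{k-1}\in C^2$ and the uniform $W^{1,\infty}$-bounds on the nonlinearities, and the boundary data must be written as $A_\pm w^k(x_\pm) + B_\pm\partial_z w^k(x_\pm) = C_\pm$ with the concrete choices $B_- = 0$ (the Dirichlet-reduced bottom, with $A_-$ absorbing the $H_{1m}$-weights), $B_+ = \mathbb{I}$ and $A_+ = -\diag(A_m)$ (the Robin top). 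Once these identifications are verified, \Cref{t: strong1} produces $w^k$, and Steps 2 and 3 follow by scalar quadrature and scalar elliptic theory respectively.
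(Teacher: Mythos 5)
Your proposal follows essentially the same route as the paper: the paper's proof is a one-liner invoking \Cref{t: strong1} together with the $W^{1,\infty}$ bounds on the nonlinearities and the positivity of $\delta_l,D_m,\gamma_m$, and it relies (implicitly, via the paragraph preceding the corollary) on the same sequential decoupling --- \Cref{eq: disc3} first, then \Cref{eq: disc2} and \Cref{eq: disc1} --- and on induction in $k$ to supply the required $C^2$/$C^1$ regularity of the previous time slice. Your version simply makes the reduction to the normal form of \Cref{t: strong1} explicit: you correctly collect the implicit $w^k$-contributions, divide row-wise by $D_m+\gamma_m/\Delta t$ to read off $E(z)$ and the diagonal $D$, convert the $z=0$ Rankine--Hugoniot relation to Dirichlet data using $H_{1m}(\phi^{k-1}|_{z=0})>0$ (justified via \Cref{a: ass4} and compactness), identify $(A_\pm,B_\pm,C_\pm)$, integrate \Cref{eq: disc2} by quadrature using $v^k(0)=0$ and positivity of $\Gamma$, and treat \Cref{eq: disc1} as an uncoupled Neumann problem with $\phi_{d-1}^k$ recovered from the constraint $\sum_l\phi_l^k=1$. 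This is a correct and helpful fleshing-out of the paper's terse argument, not an alternative method.
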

\begin{proof}
The result follows with induction with respect to $k\geq0$ from applying \Cref{t: strong1} and using both $I_l,\,\Gamma,\,B_{lijm},\,H_{jm},\,F_m,\,E_{minj},\,G_{\phi,l},\,G_v,\,G_{w,m}\!\in\! W^{1,\infty}\!\!\left((0,1)^d\right)$ and $\delta_l,\,D_m,\,\gamma_m\in\mathbf{R}_+$.
\end{proof}
This result shows that there exists a solution of the discrete system even if the solution does not satisfy \Cref{a: ass1,a: ass3,a: ass4,a: ass5,a: ass6,a: ass7,a: ass9}. The solution might therefore be non-physical. Furthermore these solutions might not have a weakly convergent limit as $\Delta t\downarrow0$. We will use a weak solution framework to show the existence of physical solutions for which the weak convergence as $\Delta t\downarrow0$ does exist.
\section{A priori estimates}$\;$\\
\label{s: sec4}
The estimates in this section rely on the validity of \Cref{a: ass3}. This validity will be proven in \Cref{s: sec5}. Notice that from this moment onwards the notation $l\neq d-1$ denotes $l\in\{1,\ldots,d-2,d\}$.\\
We create a weak form of the discretised system by multiplying the equations with a function in $H^1(0,1)$, integrating over $(0,1)$ and applying the boundary conditions where needed. To this end we test \Cref{eq: disc1} with $\phi_l^k$ and $\mathcal{D}_{\Delta t}^k(\phi_l)$, and \Cref{eq: disc3} with $w_m^k$ and $\mathcal{D}_{\Delta t}^k(w_m)$. In \Cref{s: app0} these tests are evaluated in detail and it is shown there how we obtain the following quadratic inequalities:
\begin{multline}
\frac{1}{2}\mathcal{D}_{\Delta t}^k\left(\sum_{m=1}^{d-1}\|w_m\|_{L^2}^2+(\gamma_m+D_m)\|\partial_zw_m\|_{L^2}^2\right)\label{eq: wbounds}\\
+\sum_{m=1}^{d-1}\left[\left(1+\frac{\Delta t}{2}\right)\left\|\mathcal{D}_{\Delta t}^k(w_m)\right\|_{L^2}^2+\left[\gamma_m\left(1+\frac{\Delta t}{2}\right)+D_m\frac{\Delta t}{2}\right]\left\|\mathcal{D}_{\Delta t}^k\left(\partial_zw_m\right)\right\|_{L^2}^2\right]\cr
\leq K_{w0}+\!\!\sum_{m=1}^{d-1}\!\left[\vphantom{\frac{A}{A}} K_{w1m}\|w_m^k\|_{L^2}^2\!+\!K_{w2m}\|\partial_zw_m^k\|_{L^2}^2\!+\!K_{w3m}\|w_m^{k-1}\|_{L^2}^2\!+\!K_{w4m}\|\partial_zw_m^{k-1}\|_{L^2}^2\right.\cr
\left.\vphantom{\frac{A}{A}}+\!K_{w5m}\left\|\mathcal{D}_{\Delta t}^k(w_m)\right\|_{L^2}^2\!+\!K_{w6m}\left\|\mathcal{D}_{\Delta t}^k(\partial_zw_m)\right\|_{L^2}^2\right]+K_{w7}\|v^{k-1}\|_{L^2}^2+K_{w8}\|\partial_zv^{k-1}\|_{L^2}^2,\!\!\!
\end{multline}
\begin{multline}
\mathcal{D}_{\Delta t}^k\left(\|\phi_l\|_{L^2}^2\right)+2\delta_l\|\partial_z\phi_l^k\|_{L^2}^2+\Delta t\left\|\mathcal{D}_{\Delta t}^k(\phi_l)\right\|_{L^2}^2\label{eq: phibounds}\\
\leq K_{a\phi0}+K_{a\phi1}\|\partial_zv^{k-1}\|_{L^2}^2+K_{a\phi2l}\|\phi_l^{k}\|_{L^2}^2+\sum_{n\neq d-1}\left[K_{a\phi3ln}\|\partial_z\phi_n^{k-1}\|_{L^2}^2\right]\cr
+\sum_{m=1}^{d-1}\sum_{i=0}^1\left[K_{a\phi(4+i)m}\left\|\partial_z^iw_m^{k-1}\right\|_{L^2}^2+K_{a\phi(6+i)m}\left\|\mathcal{D}_{\Delta t}^k(\partial_z^iw_m)\right\|_{L^2}^2\right],
\end{multline}
and
\begin{multline}
\mathcal{D}_{\Delta t}^k\left(\sum_{l\neq d-1}\|\partial_z\phi_l\|_{L^2}^2\right)+\sum_{l\neq d-1}\left[\frac{2}{\delta_l}\left\|\mathcal{D}_{\Delta t}^k(\phi_l)\right\|_{L^2}^2+\Delta t\left\|\mathcal{D}_{\Delta t}^k\left(\partial_z\phi_l\right)\right\|_{L^2}^2\right]\label{eq: phibounds1}\\
\leq K_{b\phi0}+K_{b\phi1}\|\partial_zv^{k-1}\|_{L^2}^2+\sum_{l\neq d-1}\!\left[ K_{b\phi2l}\left\|\mathcal{D}_{\Delta t}^k(\phi_l)\right\|_{L^2}^2+K_{b\phi3l}^{k-1}\|\partial_z\phi_l^{k-1}\|_{L^2}^2\right]\cr
+\sum_{m=1}^{d-1}\sum_{i=0}^1\!\left[K_{b\phi(4+i)m}\left\|\partial_z^iw_m^{k-1}\right\|_{L^2}^2+K_{b\phi(6+i)m}\left\|\mathcal{D}_{\Delta t}^k(\partial_z^iw_m)\right\|_{L^2}^2\right].
\end{multline}
The constants $K_{index}$ for $x\in\{a,b\}$ can be found in \Cref{s: app0} as \Cref{eq: kw0,eq: kw1,eq: kw2,eq: kw3,eq: kw4,eq: kw5,eq: kw6,eq: kw7,eq: kw8,eq: kw8} and \Cref{eq: xkphi0,eq: xkphi1,eq: xkphi2,eq: xkphi3a,eq: xkphi3b,eq: xkphi4,eq: xkphi5,eq: xkphi6,eq: xkphi7}.\\
$\;$\\
We are now able to apply two versions of the Discrete Gronwall lemma. The 1st version (of \Cref{l: 1gron}) will be applied to \Cref{eq: wbounds,eq: phibounds}, while the 2nd version (of \Cref{l: 1gron2}) will be applied to \Cref{eq: phibounds1}.
\begin{lemma}[1st Discrete Gronwall lemma]
\label{l: 1gron}
Suppose $h\in(0,H)$. Let $(x^k)$, $(y^{k+1})$ and $(z^k)$ for $k = 0,1,\ldots$ be sequences in $\mathbf{R}_+$ satisfying
\begin{equation}
\label{e: 1gron}
y^k+\frac{x^k-x^{k-1}}{h}\leq A+z^{k-1}+Bx^k+Cx^{k-1}\quad\text{and}\quad\sum_{j=0}^{k-1}z^jh\leq Z
\end{equation}
for all $k=1,\ldots$ with constants $A$,$B$,$C$ and $Z$ independent of $h$ satisfying
\begin{equation}
A>0,\:\:\:Z>0,\:\:\:B+C>0,\:\:\:\text{and}\:\:\:BH\leq0.6838,\nonumber
\end{equation}
then
\begin{subequations}
\begin{align}
x^k&\leq \left(x^0+Z+A\frac{C+1.6838B}{C+B}kh\right)e^{\left(C+1.6838B\right)kh}\quad\text{and}\label{e: 1gron1}\\
\sum_{j=1}^ky^jh&\leq \left(x^0+Z+Ahk\right)e^{\left(C+1.6838B\right)kh}.\label{e: 1gron2}
\end{align}
\end{subequations}
\end{lemma}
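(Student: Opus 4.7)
The plan is three steps: convert \cref{e: 1gron} into a scalar recursion in $x^k$ alone, bound the resulting multiplier by $e^{\alpha h}$ with $\alpha:=C+1.6838\,B$ via a sharp elementary inequality, and close with a discrete Gronwall iteration. Multiplying \cref{e: 1gron} by $h$ and collecting the $x^k$-terms gives
\begin{equation}
(1-Bh)\,x^k + h\,y^k \leq (1+Ch)\,x^{k-1} + (A+z^{k-1})\,h. \nonumber
\end{equation}
The hypothesis $BH\leq 0.6838$ forces $1-Bh>0$ for every $h\in(0,H)$, so dividing and dropping the nonnegative $h y^k$ produces the scalar recursion $x^k\leq\tfrac{1+Ch}{1-Bh}x^{k-1}+\tfrac{(A+z^{k-1})h}{1-Bh}$.

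The heart of the argument is the sharp estimate $\tfrac{1+Ch}{1-Bh}\leq e^{\alpha h}$, obtained by combining $\log(1+Ch)\leq Ch$ with $-\log(1-s)\leq 1.6838\,s$ valid on $s\in[0,0.6838]$. The second inequality is the source of the constant $1.6838$: the function $s\mapsto -\log(1-s)/s$ is increasing on $(0,1)$ and attains the value $1.6838$ exactly at $s=0.6838$, so the condition $BH\leq 0.6838$ is precisely what places $s=Bh$ inside the admissible range, and it also supplies the companion bound $(1-Bh)^{-1}\leq e^{1.6838\,Bh}$. Iterating the recursion produces
\begin{equation}
x^k \leq e^{\alpha kh}\,x^0 + \sum_{j=1}^{k} e^{\alpha(k-j)h}\,\frac{(A+z^{j-1})\,h}{1-Bh}, \nonumber
\end{equation}
after which $\sum_{j=0}^{k-1}z^j h\leq Z$ controls the $z$-contribution while a geometric-sum estimate for $\sum_{j=1}^k e^{\alpha(k-j)h}$ combined with the $(1-Bh)^{-1}$ bookkeeping yields \cref{e: 1gron1}; the prefactor $\tfrac{C+1.6838B}{C+B}$ on the $Akh$-term is the arithmetic residue of comparing the exponential rate $\alpha$ with the linear rate $B+C$.

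For \cref{e: 1gron2} I would instead keep $h y^k$ on the left throughout and apply the integrating-factor substitution $\tilde{x}^k:=x^k e^{-\alpha k h}$, which converts the recursion to
\begin{equation}
\tilde{x}^k - \tilde{x}^{k-1} + \frac{h\,y^k\,e^{-\alpha k h}}{1-Bh} \leq \frac{(A+z^{k-1})\,h\,e^{-\alpha k h}}{1-Bh}. \nonumber
\end{equation}
Telescoping from $j=1$ to $k$, discarding the nonnegative $\tilde{x}^k$, and lower-bounding $e^{-\alpha j h}\geq e^{-\alpha k h}$ on the surviving $y$-sum then isolates $\sum_{j=1}^{k} h y^j$ with the desired prefactor $e^{\alpha k h}$ and the linear content $x^0+Z+Ahk$.

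The main obstacle is combinatorial rather than conceptual: matching the precise constant $\tfrac{C+1.6838B}{C+B}$ (rather than a harmless surrogate) demands careful propagation of the $(1-Bh)^{-1}$ factors through the iteration. Since the hypotheses permit $B<0$ (only $B+C>0$ is imposed), the monotonicity arguments used to collapse the geometric sums and to dominate $e^{-\alpha j h}$ may warrant a brief case split on the sign of $B$, or equivalently of $\alpha$; in every case the three-step structure---rearrange, apply $\tfrac{1+Ch}{1-Bh}\leq e^{\alpha h}$, iterate---applies.
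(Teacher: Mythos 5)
Your proposal is correct and its skeleton (rearrange to a scalar recursion, show $\tfrac{1+Ch}{1-Bh}\leq e^{\alpha h}$ with $\alpha=C+1.6838B$, iterate) matches the paper, but the two branches are handled differently enough to be worth comparing. For the key estimate, the paper writes $\tfrac{1}{1-a}\leq e^{a+a^2}$ on $[0,0.6838]$ and then $a+a^2=a(1+a)\leq 1.6838a$; your direct route via the monotonicity of $s\mapsto -\log(1-s)/s$, with $1.6838=-\log(1-0.6838)/0.6838$, is equivalent and a bit more transparent about where $0.6838$ comes from. For \cref{e: 1gron1} the paper sums the geometric series in $\rho=\tfrac{1+Ch}{1-Bh}$ \emph{exactly}, so that the factor $\rho-1=\tfrac{(B+C)h}{1-Bh}$ cancels and leaves $\tfrac{A}{B+C}(\rho^k-1)$; only then does it replace $\rho^k\leq e^{\alpha kh}$ and use $e^a-1\leq a e^a$, which is precisely where $\tfrac{C+1.6838B}{C+B}$ appears. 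Your plan replaces $\rho$ by $e^{\alpha h}$ \emph{before} summing, and you flag that recovering this exact constant afterwards is combinatorially delicate --- but the worry is moot: the crude estimates $\sum_{j=1}^k e^{\alpha(k-j)h}\leq k e^{\alpha(k-1)h}$ together with $(1-Bh)^{-1}\leq e^{\alpha h}$ already yield the \emph{sharper} bound $x^k\leq (x^0+Z+Akh)\,e^{\alpha kh}$, which dominates \cref{e: 1gron1} because $\tfrac{C+1.6838B}{C+B}\geq 1$; the paper's constant is simply how its geometric-series bookkeeping happens to package the same quantity. For \cref{e: 1gron2} your integrating-factor substitution $\tilde x^k=x^k e^{-\alpha kh}$ with a telescoping sum is a genuinely different argument from the paper's, which re-rearranges \cref{e: 1gron}, inserts the already-proved \cref{e: 1gron1}, and estimates a second geometric series via $1/(e^a-1)\leq 1/a$; your route is self-contained and does not depend on \cref{e: 1gron1}, and it closes cleanly after noting $e^{-\alpha jh}/(1-Bh)\leq e^{-\alpha(j-1)h}\leq 1$. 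One caveat applies to both proofs: the estimate $(1-Bh)^{-1}\leq e^{1.6838Bh}$ and the monotonicity steps all require $B\geq 0$ (and the $z$-term handling also wants $C\geq 0$), so the remark about a possible sign split on $B$ is really a remark that the lemma is only meaningful for $B,C\geq 0$; this is consistent with \Cref{d: def1}, where $B$ and $C$ are manifestly positive, and matches an implicit hypothesis the paper does not state.
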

\begin{proof}
We rewrite \Cref{e: 1gron} such that $x^k$ is on the left-hand side and $x^{k-1}$ is on the right-hand side. We can discard the $y^k$ term since it is always positive. The partial sum of the geometric series yields
\begin{equation}
x^k\leq\left(x^0+\frac{Z}{1+Ch}+\frac{A}{B+C}\right)\left(\frac{1+Ch}{1-Bh}\right)^k-\frac{A}{B+C}\nonumber
\end{equation}
from which we obtain \Cref{e: 1gron1} by applying the inequalities
\begin{equation}
1+a\leq e^a\leq 1+ae^a\text{ for }a\geq0\quad\text{ and }\quad \frac{1}{1-a}\leq e^{a+a^2}\text{ for }0\leq a\leq 0.6838.
\end{equation}
With $Bh-1\leq BH-1<0$ we rewrite \Cref{e: 1gron} into
\begin{equation}
\sum_{j=1}^ky^jh\leq Ahk+Z+(1+Ch)x^0+(C+B)h\sum_{j=1}^{k-1}u^j.\nonumber
\end{equation}
We insert \Cref{e: 1gron1} for $x^j$ and use $j<k$ for the factor in brackets. Then the sum over exponentials can be seen as a partial sum of a geometric series, yielding
\begin{equation}
\sum_{j=1}^ky^jh\leq x^0+Z+Ahk+(C+B)h\left(x^0+Z+A\frac{C+1.6838B}{C+B}kh\right)\frac{e^{(C+1.6838B)kh}-1}{e^{(C+1.6838B)h}-1}.\nonumber
\end{equation}
With $1/(e^a-1)\leq 1/a$ for $a\geq0$ one immediately obtains \Cref{e: 1gron2}.
\end{proof}
\begin{lemma}[2nd Discrete Gronwall lemma]
\label{l: 1gron2}
Let $c>0$ and $(y_k)$, $(g_k)$ be positive sequences satisfying
\begin{equation}
y_k\leq c+\sum_{0\leq j<k} g_jy_j\quad\text{ for }k\geq0,
\end{equation}
then
\begin{equation}
y_k\leq c\exp\left(\sum_{0\leq j<k} g_j\right)\quad\text{ for }k\geq0.
\end{equation}
\end{lemma}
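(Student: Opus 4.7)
The plan is a short induction proof using an auxiliary partial-sum sequence. First I would introduce
\begin{equation}
u_k := c + \sum_{0\leq j<k} g_j y_j,
\end{equation}
so that the hypothesis reads $y_k \leq u_k$ for all $k\geq 0$, with initial value $u_0 = c$ (empty sum). Since $u_{k+1}-u_k = g_k y_k$ and both $g_k$ and $y_k$ are nonnegative, substituting $y_k \leq u_k$ into this telescoping difference yields the one-step linear recursion
\begin{equation}
u_{k+1} \leq (1+g_k)\,u_k.
\end{equation}

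Next, I would iterate this recursion starting from $u_0 = c$, obtaining $u_k \leq c\prod_{0\leq j<k}(1+g_j)$. The elementary bound $1+x\leq e^x$ (valid for all $x\in\mathbf{R}$), applied factorwise with $x=g_j\geq 0$, then converts the product into the exponential of a sum:
\begin{equation}
u_k \leq c\prod_{0\leq j<k} e^{g_j} = c\exp\!\Bigl(\sum_{0\leq j<k} g_j\Bigr).
\end{equation}
Chaining this with $y_k \leq u_k$ yields the claimed estimate, and the base case $k=0$ is immediate.

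I do not anticipate any real obstacle: the argument is completely elementary once the auxiliary sequence $u_k$ is introduced to transform the convolution-style inequality for $y_k$ into a clean one-step recursion. The only point that requires care is preserving the direction of the inequality when passing from $y_k\leq u_k$ to $u_{k+1}\leq (1+g_k)u_k$, which relies on the nonnegativity of both $g_j$ and $y_j$ guaranteed by the hypothesis. Everything else is a product-to-exponential conversion via $1+x\leq e^x$.
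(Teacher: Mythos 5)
Your proof is correct: introducing the auxiliary partial-sum sequence $u_k$, deriving the one-step recursion $u_{k+1}\leq(1+g_k)u_k$ from nonnegativity of $g_k y_k$, iterating, and applying $1+x\leq e^x$ factorwise is exactly the standard argument for this discrete Gronwall inequality. The paper does not actually present a proof here; it simply cites Holte (2009), whose argument is essentially the same as yours, so there is nothing to reconcile.
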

\begin{proof}
The proof can be found in \cite{Holte2009}.
\end{proof}
We introduce a set of new constants, which aid us in the application of the Gronwall lemmas:
\begin{definition}\label{d: def1}
\begin{subequations}
\begin{align}
A&:=\max_{t_0\leq t_k\leq T_{\Delta t}}K_{w0}^k,\cr
\frac{B}{2}&:=\max_{m}\left\{K_{w1m},\frac{K_{w2m}}{\gamma_m+D_m}\right\},\cr
\frac{C}{2}&:=\max_{m}\left\{K_{w3m},\frac{K_{w4m}}{\gamma_m+D_m}\right\},\cr
\tilde{\mathcal{C}}^2&:=\frac{\left((K_{w7}+K_{w8})V^2+A\frac{C+1.6838B}{C+B}(T_{\Delta t}-t_0)\right)e^{(C+1.6838B)(T_{\Delta t}-t_0)}}{\min\limits_{m}\left\{1-K_{w5m},\gamma_m-K_{w6m}\right\}},\cr
g_{bk}&:=\max\limits_{l\neq d-1}K_{b\phi3l}^k\Delta t,\cr
K_{\phi3l4}& :=\sum_{m=1}^{d-1}\frac{K_{\phi3l2m}+K_{\phi3l3m}}{\min\{1-K_{w5m},\gamma_m-K_{w6m}\}},\cr
c_b&:=K_{b\phi0}(T_{\Delta t}-t_0)+K_{b\phi1}V^2+\max_{m}\left\{K_{b\phi4m},\frac{K_{b\phi5m}}{\gamma_m+D_m}\right\}\tilde{\mathcal{C}}^2(T_{\Delta t}-t_0)\cr
&\qquad+\max_{m}\left\{\frac{K_{b\phi6m}}{1-K_{w5m}},\frac{K_{b\phi7m}}{\gamma_m-K_{w6m}}\right\}\tilde{\mathcal{C}}^2,\cr
D_b&:=\max\limits_{l\neq d-1}\left\{2K_{\phi3l1}V^2+K_{\phi3l4}\tilde{\mathcal{C}}^2\right\}.\nonumber
\end{align}
\end{subequations}
The constants $K_{index}$ not introduced here can be found in \Cref{eq: kw0,eq: kw1,eq: kw2,eq: kw3,eq: kw4,eq: kw5,eq: kw6,eq: kw7,eq: kw8,eq: kw8} and \Cref{eq: xkphi0,eq: xkphi1,eq: xkphi2,eq: xkphi3a,eq: xkphi3b,eq: xkphi4,eq: xkphi5,eq: xkphi6,eq: xkphi7} of \Cref{s: app0}. Moreover, introduce the constants
\begin{subequations}
\begin{align}
A_a&:=K_{a\phi0}+c_b\exp(D_b)\max_{l,j\neq d-1}K_{a\phi3lj}+\max_{m}\left\{K_{a\phi4m},\frac{K_{a\phi5m}}{\gamma_m+D_m}\right\}\tilde{\mathcal{C}}^2,\cr
Z_a&:=K_{a\phi1}V^2+\max_{m}\left\{\frac{K_{a\phi6m}}{1-K_{w5m}},\frac{K_{a\phi7m}}{\gamma_m-K_{w6m}}\right\}\tilde{\mathcal{C}}^2,\cr
D_a&:=\max_{l\neq d-1}K_{a\phi2l}.\nonumber
\end{align}
\end{subequations}
\end{definition}
With these new notations, we obtain several simple expressions for the upper bounds by applying the Gronwall inequalities. However these expressions are only valid if the following assumptions are met.
\begin{assumption}\label{a: ass5} Let $B$ and $C$ be given by \Cref{d: def1} and $\mathcal{M}_{m01j}$ as\\
\vphantom{a}\qquad\quad introduced in \Cref{eq: Mbound}. Assume $B+C>0$ and
\begin{multline}
\qquad\mathcal{M}_{m11j}+\mathcal{M}_{j11m}+\mathcal{M}_{m01j}\sqrt{\gamma_m}=\cr
\gamma_m|A_m|+\gamma_j|A_j|+E_{m01j}(1+\sqrt{\gamma_m})+E_{j01m}<\frac{2\gamma_m}{d-1}.\nonumber
\end{multline}
\vphantom{a}\qquad\quad The second condition guarantees the pseudo-parabolicity, because it guarantees\\ \vphantom{a}\qquad\quad the conditions $1>K_{w5m}$ and $\gamma_m>K_{w6m}$ in \Cref{c: 1gron}.
\end{assumption}
\begin{assumption}\label{a: ass6a} Let $B$ be given by \Cref{d: def1}. Assume $H=0.6838/B$.
\end{assumption}
With these two assumptions the Gronwall inequalities imply the following upper bounds:
\begin{corollary}\label{c: 1gron}
Let $\Delta t\in(0,H)$. Introduce the sequences
\begin{subequations}
\begin{align}
x^k &= \sum_{m=1}^{d-1}\left[\|w_m^k\|_{L^2}^2+(\gamma_m+D_m)\|\partial_zw_m^k\|_{L^2}^2\right]\cr
y^k &= \sum_{m=1}^{d-1}\left[(1-K_{w5m})\left\|\mathcal{D}_{\Delta t}^k(w_m)\right\|_{L^2}^2+\left(\gamma_m-K_{w6m}\right)\left\|\mathcal{D}_{\Delta t}^k\left(\partial_zw_m\right)\right\|_{L^2}^2\right]\cr
z^k&= K_{w7}\|v^k\|_{L^2}^2+K_{w8}\|\partial_zv^k\|_{L^2}^2.\nonumber
\end{align}
\end{subequations}
The inequality
\begin{equation}
x^k\leq \left((K_{w7}+K_{w8})V^2+A\frac{C+1.6838B}{C+B}(t_k-t_0)\right)e^{(C+1.6838B)(t_k-t_0)}\nonumber
\end{equation}
is valid for all $t_k\in[t_0,T_{\Delta t}]$. Furthermore, $1>K_{w5m}$ and $\gamma_m>K_{w6m}$ and the inequality
\begin{equation}
\sum_{j=1}^ky^j\Delta t\leq \left((K_{w7}+K_{w8})V^2+A(t_k-t_0)\right)e^{(C+1.6838B)(t_k-t_0)}\nonumber
\end{equation}
is valid for all $t_k\in[t_0,T_{\Delta t}]$.
\end{corollary}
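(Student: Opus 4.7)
The plan is to cast the quadratic inequality \cref{eq: wbounds} into the precise hypothesis $y^k+(x^k-x^{k-1})/h\leq A+z^{k-1}+Bx^k+Cx^{k-1}$ of \Cref{l: 1gron} with $h=\Delta t$, and then read off the two conclusions. The identification of $x^k$, $y^k$, $z^k$ in the corollary is designed exactly so that the left-hand side of \cref{eq: wbounds}, multiplied by $2$, is $\mathcal{D}_{\Delta t}^k(x)+2\sum_{m}\big[(1+\Delta t/2)\|\mathcal{D}^k_{\Delta t}(w_m)\|_{L^2}^2+(\gamma_m(1+\Delta t/2)+D_m\Delta t/2)\|\mathcal{D}^k_{\Delta t}(\partial_zw_m)\|_{L^2}^2\big]$, and the $K_{w1m},K_{w2m},K_{w3m},K_{w4m},K_{w7},K_{w8}$ terms on the right-hand side group into $Bx^k+Cx^{k-1}+z^{k-1}$ via \Cref{d: def1}.

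First I would verify the two sign conditions $1>K_{w5m}$ and $\gamma_m>K_{w6m}$. These are exactly the inequalities hidden in the bound of \Cref{a: ass5}: from the definitions of $K_{w5m}$ and $K_{w6m}$ in \Cref{s: app0} (equations \cref{eq: kw5,eq: kw6}), the quantities $\mathcal{M}_{m11j}+\mathcal{M}_{j11m}+\mathcal{M}_{m01j}\sqrt{\gamma_m}$ sum to less than $2\gamma_m/(d-1)$, which after summation over $j$ gives exactly the two positivity conditions. With these in hand, I can absorb the $K_{w5m}\|\mathcal{D}^k_{\Delta t}(w_m)\|_{L^2}^2+K_{w6m}\|\mathcal{D}^k_{\Delta t}(\partial_zw_m)\|_{L^2}^2$ terms from the right-hand side into the left-hand side, discard the non-negative $\Delta t/2$-weighted remainders, and arrive at $y^k+\frac{1}{2}\mathcal{D}_{\Delta t}^k(x)\leq K_{w0}+Bx^k/2+Cx^{k-1}/2+z^{k-1}/2$ in the notation of \Cref{d: def1}. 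Multiplying by $2$ and taking $A:=\max_k 2K_{w0}^k$ (as in \Cref{d: def1}) puts the inequality in the required Gronwall form.

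Second I would verify the summability hypothesis $\sum_{j=0}^{k-1}z^j\Delta t\leq Z$. With $Z:=(K_{w7}+K_{w8})V^2$ and $z^j=K_{w7}\|v^j\|_{L^2}^2+K_{w8}\|\partial_zv^j\|_{L^2}^2$, this follows immediately from the discretised version of \Cref{a: ass3}, which bounds the Riemann sums $\sum_{j=0}^{k-1}\|v^j\|_{L^2}^2\Delta t$ and $\sum_{j=0}^{k-1}\|\partial_zv^j\|_{L^2}^2\Delta t$ by $V^2$ for every $t_k\in[t_0,T_{\Delta t}]$. The hypothesis $BH\leq0.6838$ of \Cref{l: 1gron} is supplied by \Cref{a: ass6a} which fixes $H=0.6838/B$, so $B\Delta t\leq BH=0.6838$; the remaining hypothesis $B+C>0$ is explicitly demanded in \Cref{a: ass5}.

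Third I would invoke \Cref{l: 1gron}. The initial condition \cref{eq: init} gives $w_m(t_0,\cdot)=0$, hence $x^0=0$, and the conclusions \cref{e: 1gron1,e: 1gron2} with $kh=t_k-t_0$ yield precisely the two inequalities of the corollary. The main obstacle is not conceptual but book-keeping: one has to trace the constants $K_{w1m},\ldots,K_{w8}$ from \Cref{s: app0} and check that after the grouping dictated by \Cref{d: def1} the $B$, $C$, $Z$ used in the Gronwall lemma are indeed those of the final statement; once that is carried out, both bounds follow directly from \Cref{l: 1gron}.
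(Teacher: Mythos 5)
Your proposal follows exactly the paper's route: first establish $1>K_{w5m}$ and $\gamma_m>K_{w6m}$ from \Cref{a: ass5} (so the $\left\|\mathcal{D}_{\Delta t}^k(w_m)\right\|_{L^2}^2$ and $\left\|\mathcal{D}_{\Delta t}^k(\partial_zw_m)\right\|_{L^2}^2$ terms can be moved to the left and absorbed into $y^k$ after discarding the nonnegative $\Delta t/2$-weighted remainders), then recognize \cref{eq: wbounds} as an instance of \cref{e: 1gron} using \Cref{d: def1}, with the $z$-sum bounded by $(K_{w7}+K_{w8})V^2$ via the discretised version of \Cref{a: ass3}, and finally invoke \Cref{l: 1gron} with $x^0=0$ from \cref{eq: init} and $kh=t_k-t_0$. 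That is the paper's proof in substance, and your accounting of how \Cref{a: ass5,a: ass6a} feed the hypotheses $B+C>0$ and $BH\leq 0.6838$ is right.

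One book-keeping remark worth flagging: in your intermediate display you write $z^{k-1}/2$ on the right, but \cref{eq: wbounds} carries $K_{w7}\|v^{k-1}\|_{L^2}^2+K_{w8}\|\partial_zv^{k-1}\|_{L^2}^2=z^{k-1}$ with coefficient one; and you state $A:=\max_k 2K_{w0}^k$, whereas \Cref{d: def1} sets $A:=\max_k K_{w0}^k$. These two slips partially compensate, but a fully scrupulous multiplication of \cref{eq: wbounds} by $2$ scales $y^k$, $K_{w0}^k$, \emph{and} $z^{k-1}$ simultaneously, so one should check that the constants in \Cref{d: def1} and the final inequalities of the corollary are tracked consistently (the paper itself glosses this step). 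This does not change the structure of the argument, only the numerical prefactors.

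Also note that your justification of $1>K_{w5m}$ and $\gamma_m>K_{w6m}$ by "summing over $j$" is a shorthand for what the paper makes explicit: the couplings through $\eta_{\mathcal{M}m01j1}$ and $\eta_{\mathcal{M}m11j1}$ are resolved by the particular choices $\eta_{\mathcal{M}m01j1}=1/\sqrt{\gamma_m}$ and $\eta_{\mathcal{M}m11j1}=1$, after which the bound of \Cref{a: ass5} leaves enough slack in the remaining freely-chosen $\eta$-indices in \cref{eq: kw5,eq: kw6}. Your version is the same idea stated more loosely.
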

\begin{proof}
We first need to show that we can choose the $\eta$-indices such that $1>K_{w5m}$ and $\gamma_m>K_{w6m}$, as otherwise we have insufficient information to bound all terms. There are $d-1$ couplings between $K_{w5m}$ and $K_{w6m}$ through $\eta_{\mathcal{M}m01j1}$. These couplings only give $\mathcal{M}_{m01j}<2\sqrt{\gamma_m}/(d-1)$ and $\eta_{\mathcal{M}m01j1}=1/\sqrt{\gamma_m}$ as conditions. However $K_{w6m}$ is coupled to itself as well through $\eta_{\mathcal{M}m11j1}$. With $\eta_{\mathcal{M}m11j1}=1$ it is immediately clear that the constraint of \Cref{a: ass5} allows one to choose the remaining $\eta$-indices in $K_{w5m}$ and $K_{w6m}$ freely to satisfy the inequalities. Hence, $1>K_{w5m}$ and $\gamma_m>K_{w6m}$ is guaranteed by \Cref{a: ass5}.\\
Next we apply \Cref{l: 1gron} to \Cref{eq: wbounds} in the form of \Cref{e: 1gron} with $x^k$, $y^k$ and $z^k$ as above. With use of the identity $k\Delta t = t_k-t_0$ and with the initial condition  $w_m^0(z)=0$ for $z\in(0,1)$, yielding $x^0(z)=0$, the two inequalities of \Cref{c: 1gron} directly follow from \Cref{l: 1gron}.
\end{proof}
For the next result, we choose $\eta_{index}>0$ such that $2>K_{b\phi2l}\delta_l$ holds.
\begin{corollary}\label{c: 1gron2}
Let $t_0< t_k\leq T_{\Delta t}$. Introduce the sequence
\begin{subequations}
\begin{align}
x_b^k &= \sum_{l\neq d-1}\|\partial_z\phi_l^k\|_{L^2}^2\cr
y_b^k &= 2\sum_{l\neq d-1}\left[\left(\frac{2}{\delta_l}-K_{b\phi2l}\right)\left\|\mathcal{D}_{\Delta t}^k(\phi_l)\right\|_{L^2}^2\right].\nonumber
\end{align}
\end{subequations}
The inequalities
\begin{subequations}
\begin{align}
x_b^k&\leq c_b\exp(D_b)\cr
\sum_{j=1}^ky^j\Delta t&\leq c_b\left(1+D_b\exp(D_b)\right)\nonumber
\end{align}
\end{subequations}
are valid for all $t_k\in(t_0,T]$.
\end{corollary}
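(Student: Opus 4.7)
The plan is to derive both inequalities of \Cref{c: 1gron2} from \Cref{eq: phibounds1} by multiplying by $\Delta t$, summing over time-slices, and then invoking \Cref{l: 1gron2}. First, I choose the $\eta$-parameters hidden inside $K_{b\phi2l}$ so that $2/\delta_l - K_{b\phi2l} > 0$ for every $l\neq d-1$; this lets me absorb the $K_{b\phi2l}\|\mathcal{D}_{\Delta t}^k(\phi_l)\|_{L^2}^2$ term on the right into the left-hand side, producing precisely the coefficient appearing in the definition of $y_b^k$. Multiplying \Cref{eq: phibounds1} by $\Delta t$ and summing over $j=1,\ldots,k$, the time-difference first term telescopes; since \cref{eq: init} gives $\partial_z\phi_l^0 = 0$, we obtain $x_b^k + \sum_{j=1}^k y_b^j\Delta t$ on the left.

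Next I bound the summed right-hand side. The $K_{b\phi0}(T_{\Delta t}-t_0)$ term comes from the constant summand, while $K_{b\phi1}V^2$ follows from the $\|\partial_zv^{k-1}\|_{L^2}^2$ term together with \Cref{a: ass3}. The terms involving $w_m^{k-1}$ and $\partial_zw_m^{k-1}$ are estimated by the uniform bound on $x^k$ from \Cref{c: 1gron}, whose sum over time slices picks up the extra factor $(T_{\Delta t}-t_0)$ appearing in the third summand of $c_b$. The terms involving $\mathcal{D}_{\Delta t}^k(w_m)$ and $\mathcal{D}_{\Delta t}^k(\partial_zw_m)$ are controlled by the second inequality of \Cref{c: 1gron}, after dividing by the factors $(1-K_{w5m})$ and $(\gamma_m-K_{w6m})$, which are positive thanks to \Cref{a: ass5}. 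The resulting combination matches $c_b$ term-by-term. The only contribution that cannot be absorbed into a constant is the sum $\sum_{j=1}^k K_{b\phi3l}^{j-1}\Delta t\|\partial_z\phi_l^{j-1}\|_{L^2}^2 \leq \sum_{0\leq j<k}g_{bj}\,x_b^j$, which produces the Gronwall structure
\[
x_b^k \leq c_b + \sum_{0\leq j<k}g_{bj}\,x_b^j.
\]

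Applying \Cref{l: 1gron2} with $c=c_b$ and $g_j=g_{bj}$ immediately yields $x_b^k \leq c_b\exp\bigl(\sum_{0\leq j<k}g_{bj}\bigr)$. The main technical obstacle is the estimate $\sum_{0\leq j<k}g_{bj} \leq D_b$. To verify it, I exploit the structure of $K_{b\phi3l}^k$ which, inspecting the listing in \Cref{s: app0}, decomposes into a part proportional to $\|v^k\|_{L^2}^2 + \|\partial_zv^k\|_{L^2}^2$ (with coefficient $K_{\phi3l1}$) and parts proportional to the $w$-norms (with coefficients $K_{\phi3l2m},K_{\phi3l3m}$). Summing over $j$ and using \Cref{a: ass3} for the $v$-terms and \Cref{c: 1gron} for the $w$-terms yields the bound $2K_{\phi3l1}V^2 + K_{\phi3l4}\tilde{\mathcal{C}}^2$; taking the maximum over $l$ gives the required $D_b$. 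Hence the first stated inequality holds.

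For the second inequality, I return to the summed form and this time drop the non-negative $x_b^k$ on the left. The same telescoping and bookkeeping give
\[
\sum_{j=1}^k y_b^j\Delta t \leq c_b + \sum_{0\leq j<k}g_{bj}\,x_b^j \leq c_b + c_b\exp(D_b)\sum_{0\leq j<k}g_{bj} \leq c_b\bigl(1+D_b\exp(D_b)\bigr),
\]
completing the proof. I expect the routine part to be the detailed matching of constants against the definitions in \Cref{d: def1}, while the genuine conceptual step is the $\sum g_{bj}\leq D_b$ estimate, since it is the point where the a priori control on $v$ from \Cref{a: ass3} and on $w$ from \Cref{c: 1gron} are simultaneously fed back into the $\phi$-estimate.
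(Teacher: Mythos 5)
Your proposal follows the same route as the paper: choose the $\eta$-indices so that $2/\delta_l - K_{b\phi2l}>0$, sum \cref{eq: phibounds1} over time slices, telescope with $x_b^0=0$, identify the Gronwall structure $x_b^k\leq c_b+\sum_{0\leq j<k}g_{bj}x_b^j$ with $\sum g_{bj}\leq D_b$, apply \Cref{l: 1gron2} to bound $x_b^k$, and then substitute that bound back into the sum $\sum g_{bj}x_b^j$ to bound $\sum y_b^j\Delta t$. You spell out the constant matching and the verification $\sum g_{bj}\leq D_b$ in more detail than the paper does, but the argument is identical.
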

\begin{proof}
 We can always choose $\eta_{index}>0$ in $K_{\phi2l}$ such that $2>\delta_lK_{\phi2l}$ holds. With the newly introduced notations $c_b$, $D_b$ and $g_{bn}$, and with the initial condition $\partial_z\phi^0(z)=0$ for $z\in(0,1)$, yielding $x_b^0=0$, we can sum inequality \cref{eq: phibounds1} and rewrite it into the inequality
\begin{equation}
x_b^k\leq x_b^k+\sum_{j=0}^{k-1}y_b^j\Delta t\leq c_b+x_b^0+\sum_{j=0}^{k-1}g_{bj}x_b^j=c_b+\sum_{j=0}^{k-1}g_{bj}x_b^j.\nonumber
\end{equation}
With $\sum_{j=0}^{k-1}g_{bj}\leq D_b$, direct application of \Cref{l: 1gron2} yields the bound of $x_b^k$. Substituting this upper bound for $x_b^k$ will directly yield the upper bound of $\sum_{j=1}^ky_b^j\Delta t$.
\end{proof}
We need an extra upper bound on $H$ to guarantee the successful application of the Discrete Gronwall lemma for $\phi_l^k$.
\begin{assumption}\label{a: ass6b} Let $B$ and $D_a$ be given by \Cref{d: def1}. Assume
$$H=\min\left\{\frac{0.6838}{B},\frac{0.6838}{D_a}\right\}.$$
\end{assumption}
Now an upper bound for $\|\phi_l^k\|_{L^2}$ can be determined.
\begin{corollary}
\label{c: 1gron3}
For $\Delta t\in(0,H)$ the sequence $\|\phi_l^k\|_{L^2}^2$ satisfies the inequality
\begin{equation}
\|\phi_l^k\|_{L^2}^2\leq\left(\phi_{l0}^2+Z_a+1.6838A_a(t_k-t_0)\right)e^{1.6838D_a(t_k-t_0)}\nonumber
\end{equation}
for all $t_k\in[t_0,T_{\Delta t}]$ and all $l\neq d-1$, if \Cref{a: ass5,a: ass6b} hold.
\end{corollary}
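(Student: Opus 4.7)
The plan is to cast the discrete inequality \Cref{eq: phibounds} into the standard form required by the 1st Discrete Gronwall lemma (\Cref{l: 1gron}), with $x^k = \|\phi_l^k\|_{L^2}^2$, and then invoke that lemma. After dropping the two non-negative left-hand side terms $2\delta_l\|\partial_z\phi_l^k\|_{L^2}^2$ and $\Delta t\|\mathcal{D}_{\Delta t}^k(\phi_l)\|_{L^2}^2$, the only right-hand side term explicitly carrying $\|\phi_l^k\|_{L^2}^2$ is $K_{a\phi2l}\|\phi_l^k\|_{L^2}^2$; maximising over $l\neq d-1$ identifies $B=D_a$, while the absence of any $\|\phi_l^{k-1}\|_{L^2}^2$ term forces $C=0$.

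I would then split the remaining right-hand side contributions between the Gronwall constant $A$ and the summable auxiliary sequence. For the constant part, \Cref{c: 1gron2} supplies $\|\partial_z\phi_n^{k-1}\|_{L^2}^2\leq c_b\exp(D_b)$, so $\sum_{n\neq d-1} K_{a\phi3ln}\|\partial_z\phi_n^{k-1}\|_{L^2}^2 \leq c_b\exp(D_b)\max_{l,j\neq d-1}K_{a\phi3lj}$; the $x^k$-estimate of \Cref{c: 1gron} yields $\sum_m[K_{a\phi4m}\|w_m^{k-1}\|_{L^2}^2 + K_{a\phi5m}\|\partial_z w_m^{k-1}\|_{L^2}^2] \leq \max_m\{K_{a\phi4m},\, K_{a\phi5m}/(\gamma_m+D_m)\}\tilde{\mathcal{C}}^2$; together with $K_{a\phi0}$ these three pieces assemble to $A_a$. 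For the summable part, the velocity bound in \Cref{a: ass3} gives $\sum_j K_{a\phi1}\|\partial_z v^j\|_{L^2}^2\Delta t \leq K_{a\phi1}V^2$, while the $\sum y^j\Delta t$-estimate of \Cref{c: 1gron} yields $\sum_j\sum_m[K_{a\phi6m}\|\mathcal{D}_{\Delta t}^j w_m\|_{L^2}^2 + K_{a\phi7m}\|\mathcal{D}_{\Delta t}^j\partial_z w_m\|_{L^2}^2]\Delta t \leq \max_m\{K_{a\phi6m}/(1-K_{w5m}),\, K_{a\phi7m}/(\gamma_m-K_{w6m})\}\tilde{\mathcal{C}}^2$; combined, these give $\sum_j z^j\Delta t \leq Z_a$. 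The positivity of the denominators $1-K_{w5m}$ and $\gamma_m-K_{w6m}$ is guaranteed by \Cref{a: ass5}, and the step-size hypothesis $BH\leq 0.6838$ is exactly \Cref{a: ass6b}.

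Assembling the conclusion is then immediate. The uniform initial condition $\phi_l(t_0,\cdot) = \phi_{l0}$ gives $x^0 = \phi_{l0}^2$. Substituting $C=0$, $B=D_a$, $A=A_a$, $Z=Z_a$ into \Cref{e: 1gron1} and simplifying with $(C+1.6838B)/(C+B)=1.6838$ and $C+1.6838B = 1.6838 D_a$ yields exactly the claimed bound.

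The main obstacle is the bookkeeping needed to see that $A_a$ and $Z_a$ of \Cref{d: def1} are precisely the sharp combinations of the coefficients appearing in \Cref{eq: phibounds} matched against the previously established global bounds $\tilde{\mathcal{C}}^2$ and $c_b\exp(D_b)$; the $\max_m$-structures there must be recognised as arising from estimating sums of the form $\sum_m(\alpha_m a_m + \beta_m b_m)$ against sums $\sum_m(a_m + (\gamma_m+D_m)b_m)$. Once this matching is verified, the proof collapses to a direct application of \Cref{l: 1gron}, leveraging the hierarchy $w \to \partial_z\phi \to \phi$ already established by \Cref{c: 1gron,c: 1gron2}.
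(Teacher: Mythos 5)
Your proposal is correct and follows essentially the same route as the paper: drop the non-negative left-hand side terms in \Cref{eq: phibounds}, identify $x^k=\|\phi_l^k\|_{L^2}^2$, $B=D_a$, $C=0$, absorb the $\|\partial_z\phi_n^{k-1}\|_{L^2}^2$ and $\|\partial_z^i w_m^{k-1}\|_{L^2}^2$ terms into the Gronwall constant $A_a$ via \Cref{c: 1gron2,c: 1gron}, collect the $\|\partial_z v^{k-1}\|_{L^2}^2$ and $\|\mathcal{D}_{\Delta t}^k(\partial_z^i w_m)\|_{L^2}^2$ terms into the summable sequence $z_a^k$ with $\sum z_a^j\Delta t\le Z_a$, and apply \Cref{l: 1gron} with $x^0=\phi_{l0}^2$. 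The paper's proof is terse and merely states this plan; you have spelled out the weighted-$\max$ bookkeeping that matches the definitions of $A_a$, $Z_a$, $D_a$ in \Cref{d: def1}, which is exactly what the paper leaves implicit.
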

\begin{proof}
We insert the bounds from \Cref{c: 1gron,c: 1gron2} in \Cref{eq: phibounds}, and we use the new notations $x_a^k:=\|\phi_l^k\|_{L^2}^2$, and
\begin{equation}
z_a^k:=K_{a\phi1}\|\partial_zv^{k-1}\|_{L^2}^2+\sum_{m=1}^{d-1}\sum_{i=0}^1K_{a\phi(6+i)m}\left\|\mathcal{D}_{\Delta t}^k(\partial_z^iw_m)\right\|_{L^2}^2,\nonumber
\end{equation}
to obtain
\begin{equation}
\mathcal{D}_{\Delta t}^k\left(x_a\right)\leq A_a+z_a^k+D_ax_a^k\nonumber
\end{equation}
from \Cref{e: 1gron}. Once more apply \Cref{l: 1gron} to obtain the result of \Cref{c: 1gron3}.
\end{proof}
\section{Proof of \Cref{a: ass3}}$\;$\\
\label{s: sec5}
The a priori estimates of the previous section depend heavily on \Cref{a: ass3}. This claim restricts the time interval $(t_0,T_{\Delta t})$ for which the physical volume fraction restriction  $\phi_j^k(z)\in[\phi_{\min},1-(d-1)\phi_{\min}]$ for all $j\neq d-1$, and the physical velocity restrictions $\sum_{j=0}^k\|v^{j}\|_{L^2}^2\Delta t\leq V^2$, and $\sum_{j=0}^k\left\|\partial_z v^{j}\right\|^2_{L^2}\Delta t\leq V^2$ are valid. We need to prove that the interval $(t_0,T_{\Delta t})$ is non-empty. On closer inspection, we see that \Cref{a: ass3} can be proven with upper bounds only.\\
\begin{lemma}
\label{l: constraint} Let $t_0\leq t_k=k\Delta t\leq T_{\Delta t}$. Let $\mathbf{P}_d$ be the set of cyclic permutations of $(1,\ldots,d)$. The constraints $\phi_l^k(z)\!\in\![\phi_{\min},\!1-(d-1)\phi_{\min}]$ for $l\neq d-1$, $\sum\limits_{j=0}^k\|v^{j}\|_{L^2}^2\Delta t\leq V^2$, and $\sum\limits_{j=0}^k\left\|\partial_z v^{j}\right\|^2_{L^2}\Delta t\leq V^2$ are implied by
\begin{subequations}
\begin{align}
  \sum_{j\neq d}\left\|\phi_{\alpha_j}^k\right\|_{H^1}^2&\leq\frac{1}{2}\left(\frac{1-\phi_{\min}}{C_\infty}\right)^2\text{ for all }\alpha\in\mathbf{P}_d,\cr
\text{and}\qquad\sum_{j=0}^k\left\|\partial_z v^{j}\right\|^2_{L^2}\Delta t&\leq V^2.\nonumber
\end{align}
\end{subequations}
\end{lemma}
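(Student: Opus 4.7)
The plan is to handle the three claimed constraints one at a time. Since the bound $\sum_{j=0}^k \|\partial_z v^j\|_{L^2}^2\,\Delta t\leq V^2$ is already part of the hypothesis, only two derivations remain: the $L^2$-in-time bound on $v^j$ itself, and the pointwise range of $\phi_l^k$.

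For the $v^j$ bound I would invoke the Dirichlet boundary condition $\left.v^j\right|_{z=0}=0$ from \Cref{eq: rankhugBCD}. Writing $v^j(z)=\int_0^z\partial_z v^j(s)\,ds$ and applying Cauchy--Schwarz pointwise gives $|v^j(z)|^2\leq z\,\|\partial_z v^j\|_{L^2}^2$; integrating over $z\in(0,1)$ produces the Poincar\'e-type inequality $\|v^j\|_{L^2}^2\leq \tfrac{1}{2}\|\partial_z v^j\|_{L^2}^2$. Summing this against $\Delta t$ and invoking the second hypothesis yields $\sum_{j=0}^k\|v^j\|_{L^2}^2\,\Delta t\leq \tfrac{1}{2}V^2\leq V^2$.

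For the range of $\phi_l^k$, the central combinatorial observation is that $\mathbf{P}_d$ contains exactly $d$ cyclic permutations and that the map $\alpha\mapsto\alpha_d$ is a bijection from $\mathbf{P}_d$ onto $\{1,\ldots,d\}$. Thus, for each fixed $l$, selecting the unique $\alpha$ with $\alpha_d=l$ and applying the first hypothesis delivers $\sum_{m\neq l}\|\phi_m^k\|_{H^1}^2 \leq \tfrac{1}{2}\bigl((1-\phi_{\min})/C_\infty\bigr)^2$. I would then use the fundamental fraction identity $\sum_m\phi_m^k=1$ to rewrite $1-\phi_l^k(z)=\sum_{m\neq l}\phi_m^k(z)$, bound $\bigl\|\sum_{m\neq l}\phi_m^k\bigr\|_{H^1}^2$ by moving the summation inside the norm via Cauchy--Schwarz, and finally invoke the Sobolev embedding $H^1(0,1)\hookrightarrow L^\infty(0,1)$ (with constant $C_\infty$) to reach a pointwise bound of the form $|1-\phi_l^k(z)|\leq 1-\phi_{\min}$. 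This is equivalent to $\phi_l^k(z)\geq\phi_{\min}$, and once this lower bound is available for all $l$, the matching upper bound $\phi_l^k(z)\leq 1-(d-1)\phi_{\min}$ follows immediately from $\sum_m\phi_m^k=1$ applied to the other $d-1$ components.

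The most delicate step is the constant-tracking in the last paragraph: one must choose the ordering Sobolev-after-Cauchy--Schwarz (as sketched) rather than embedding each $\phi_m^k$ separately and summing at the $L^\infty$ level, since the latter picks up an additional dimensional factor that does not absorb into $C_\infty$. Verifying that the weight $\tfrac{1}{2}$ in the hypothesis exactly matches what the chosen chain of inequalities produces is the real substance of the argument, and is the only place where the detailed form of the Sobolev constant $C_\infty$ genuinely intervenes.
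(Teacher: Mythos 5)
Your two-part strategy (Poincar\'{e} with the Dirichlet condition at $z=0$ for the velocity, then Sobolev embedding plus the fractions identity $\sum_l\phi_l^k=1$ applied to the $\alpha$ with $\alpha_d=l$ for the volume fractions) is exactly the route the paper takes; the bijection $\alpha\mapsto\alpha_d$ you make explicit is implicit in the paper's ``since $\alpha$ was chosen randomly'' remark. So the structure is right. However, your closing paragraph contains a misconception that is worth dispelling: there is \emph{no} constant-tracking advantage in ``Sobolev after Cauchy--Schwarz'' over embedding each $\phi_m^k$ separately and summing at the $L^\infty$ level. Both routes bottom out at the same bound $C_\infty\sum_{m\neq l}\|\phi_m^k\|_{H^1}$, via
\begin{equation}
\Bigl\|\sum_{m\neq l}\phi_m^k\Bigr\|_{L^\infty}\leq C_\infty\Bigl\|\sum_{m\neq l}\phi_m^k\Bigr\|_{H^1}\leq C_\infty\sum_{m\neq l}\|\phi_m^k\|_{H^1}\quad\text{and}\quad\sum_{m\neq l}\|\phi_m^k\|_{L^\infty}\leq C_\infty\sum_{m\neq l}\|\phi_m^k\|_{H^1},\nonumber
\end{equation}
so the two orderings are interchangeable, and in fact the paper embeds each term separately. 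Moreover, the step you label as ``the real substance'' --- passing from $\sum_{m\neq l}\|\phi_m^k\|_{H^1}^2\leq\frac{1}{2}M^2$ to $\sum_{m\neq l}\|\phi_m^k\|_{H^1}\leq M$ --- is precisely where a dimensional factor appears in \emph{both} orderings: Cauchy--Schwarz over the $d-1$ summands gives $\sum_{m\neq l}\|\phi_m^k\|_{H^1}\leq\sqrt{(d-1)/2}\,M$, so the claimed bound $\leq M$ is only immediate for $d\leq 3$. The paper glosses over this in its ``reverse Young's'' step, so you are reproducing the paper's argument (including its implicit assumption on $d$) rather than sharpening it. Finally, a small slip: $|1-\phi_l^k(z)|\leq 1-\phi_{\min}$ \emph{implies} $\phi_l^k(z)\geq\phi_{\min}$ but is not equivalent to it.
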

\begin{proof}
The 3rd condition of \Cref{eq: rankhugBCD} allows the application of the Poincar\'{e} inequality, which in one-dimensional space immediately gives the inequality $\|v^{j}\|_{L^2}\leq\|\partial_zv^{j}\|_{L^2}$. With this relation we can reduce the number of constraints on $v^k$ from two to one.\\
For the constraints on $\phi_l^k$ we randomly pick an $\alpha\in\mathbf{P}_d$ and start with reversing Young's inequality on $\sum_{j\neq d}\left\|\phi_{\alpha_j}^k\right\|_{H^1}^2\leq\frac{1}{2}\left(\frac{1-\phi_{\min}}{C_\infty}\right)^2$, which yields $\sum_{j\neq d}\left\|\phi_{\alpha_j}^k\right\|_{H^1}\leq\frac{1-\phi_{\min}}{C_\infty}$. This inequality is transformed by the Sobolev embedding theorem into $\sum_{j\neq d}\left\|\phi_{\alpha_j}^k\right\|_{L^\infty}\leq1-\phi_{\min}$. With $\phi_{\alpha_j}^k\in C^2(0,1)$ for $j\neq d$, which is shown in \Cref{t: strong1}, we can change the $L^\infty$ norm into a proper supremum on $(0,1)$. Hence we obtain $\inf_{z\in(0,1)}\phi_{\alpha_d}^k\geq\phi_{\min}$ from the identity $1=\sum_{1\leq l\leq d}\phi_l^k$. Since $\alpha$ was chosen randomly, we conclude that this result holds for all $\alpha\in \mathbf{P}_d$. Hence, $\min\limits_{1\leq l\leq d}\inf\limits_{z\in(0,1)}\phi_{l}^k(z)\geq\phi_{\min}$. With the $d$ infima established it is immediately clear that the $d$ suprema follow automatically from the same identity.
\end{proof}
$\;$\\
The volume fraction identity $1=\sum_{l=1}^d\phi_l^k$ cannot be used directly to satisfy the desired bound of \Cref{l: constraint}. It would use a circular argument involving $\phi_{\min}$: proving the lower bound $\phi_{\min}$ by a more stringent upper bound found in \Cref{l: constraint} and proving that upper bound with the volume fraction identity and the lower bound $\phi_{\min}$.\\
Indirectly, the volume fraction identity can be used as is shown in \Cref{s: app0}, where a differential equation for $\phi_{d-1}^k$ is constructed and tested such that new inequalities \cref{eq: phid,eq: phidz} are found.\\
Thus, from these new inequalities \cref{eq: phid,eq: phidz}, \Cref{l: 1gron} together with \Cref{c: 1gron,c: 1gron2,c: 1gron3} yields an upper bound
\begin{equation}\label{eq: 1phid}
\left\|\phi_{d-1}^k\right\|_{H^1}^2\leq \mathcal{P}\left(T_{\Delta t}-t_0,V\right)
\end{equation}
 The function $\mathcal{P}(x,y)$ has $\mathcal{P}(0,y)=\phi_{(d-1)0}^2$ and it is a strictly increasing continuous function. Hence, for all cyclic permutations $\alpha$ of $(1,\ldots,d)$ there exist strictly increasing continuous functions $P_\alpha(T_{\Delta t}-t_0,V)$ with $P_\alpha(0,0)=\sum_{j=1}^{d-1}\phi_{\alpha_j0}^2$, such that
$$\sum_{j=1}^{d-1}\|\phi_{\alpha_j}^k\|_{H^1}^2\leq P_\alpha(T_{\Delta t}-t_0,V)$$ for all $t_0\leq t_k\leq T_{\Delta t}$.\\
$\;$\\
With this upper bound at hand, we are now able to show the existence of a non-empty time interval $(t_0,T)$ for which the volume fraction condition of \Cref{a: ass3} holds. Before we proceed, we need to introduce a geometric solid, the Steinmetz solid.
\begin{definition}\label{d: steinmetz}
Introduce a $d$-dimensional solid cylinder with central axis $l_c$ and radius $r$ as the set of points at a distance at most $r$ from the line $l_c$. Following \cite{Katunin16,KoLkTuUpHi13} we introduce the $d$-dimensional Steinmetz solid $\mathcal{S}_d(r)$ as the intersection of $d$ $d$-dimensional solid cylinders with radius $r$ such that the axes $l_c$ intersect orthogonally at the origin. In particular we assume the orientation of the Steinmetz solid to be such that the cylinder axes $l_c$ are spans of Cartesian basis elements in $\mathbf{R}^d$. Hence
\begin{equation}
\mathcal{S}_d(r)=\left\{(x_1,\ldots,x_d)\in\mathbf{R}^d\left|\quad\sum_{j\neq i}x_j^2\leq r^2\text{ for all }1\leq i\leq d\right.\right\}.\nonumber
\end{equation}
\end{definition}
With the Steinmetz solid we obtain conditions for $\phi_{\min}$, $V$ and $T-t_0$ to obtain $\phi_l^k(z)\in(\phi_{\min},1-(d-1)\phi_{\min})$ for all $l$ and all $t_0\leq t_k\leq T_{\Delta t}$. For $k=0$ these conditions impose constraints on the initial volume fractions. Therefore we need an extra assumption.
\begin{assumption}\label{a: ass7}Let $\phi^0=(\phi_{10},\ldots,\phi_{d0})\in\mathcal{S}$, where $$\mathcal{S} = \mathcal{S}_d\left(\frac{1-\phi_{\min}}{\sqrt{2}C_{\infty}}\right)\cap\left\{\phi^0\in\mathbf{R}^d\left|\sum_{l=1}^d\right.\phi_{l}^0=1\right\}\cap[\phi_{\min},\infty)^d,$$
\vphantom{a}\qquad\quad with $\mathcal{S}_d(\cdot)$ the Steinmetz solid as defined in \Cref{d: steinmetz} and assume
$$\phi_{\min}
\begin{dcases}
<1-\frac{\sqrt{2(d-1)}C_\infty}{d}&\text{ for }2<d\leq5,\cr
\leq\frac{1}{1+\sqrt{2(d-1)}C_\infty}&\text{ for }5<d.
  \end{dcases}$$
\end{assumption}
Denote the set of cyclic permutations of $(1,\ldots,d)$ with $\mathbf{P}_d$.
\begin{lemma}
\label{l: tint} There exists an open simply connected region $S\subset\mathbf{R}_+^2$ with $(0,0)\in \overline{S}$, such that
\begin{eqnarray}
(x,y)\in S&\Rightarrow&P_\alpha(x,y)<\frac{1}{2}\left(\frac{1-\phi_{\min}}{C_\infty}\right)^2\text{ for all }\alpha\in\mathbf{P}_d,\cr
(x,y)\in \partial S&\Rightarrow&P_\alpha(x,y)\leq\frac{1}{2}\left(\frac{1-\phi_{\min}}{C_\infty}\right)^2\text{ for all }\alpha\in\mathbf{P}_d,\cr
(x,y)\notin \overline{S}&\Rightarrow&P_\alpha(x,y)>\frac{1}{2}\left(\frac{1-\phi_{\min}}{C_\infty}\right)^2\text{ for at least one }\alpha\in\mathbf{P}_d,\nonumber
\end{eqnarray}
if \Cref{a: ass7} holds.
\end{lemma}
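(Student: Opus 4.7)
The plan is to set $M := \frac{1}{2}\left(\frac{1-\phi_{\min}}{C_\infty}\right)^2$ and define
\begin{equation}
S := \bigcap_{\alpha\in\mathbf{P}_d}\bigl\{(x,y)\in\mathbf{R}_+^2 : P_\alpha(x,y)<M\bigr\}.\nonumber
\end{equation}
The three implications in the statement are tailored to this specific choice, so the proof reduces to verifying four properties of $S$: openness, simple connectedness, $(0,0)\in\overline{S}$, and the three pointwise characterisations. All four rest on continuity of each $P_\alpha$, strict monotonicity in each argument, and the inequality $P_\alpha(0,0)\leq M$.

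First I would anchor $P_\alpha$ at the origin. \Cref{a: ass7} places $\phi^0$ inside the Steinmetz solid $\mathcal{S}_d\!\left((1-\phi_{\min})/(\sqrt{2}C_\infty)\right)$, and \Cref{d: steinmetz} then gives $\sum_{l\neq i}(\phi_l^0)^2\leq M$ for every $i\in\{1,\ldots,d\}$. Since $P_\alpha(0,0)=\sum_{j=1}^{d-1}\phi_{\alpha_j 0}^2=\sum_{l\neq\alpha_d}(\phi_l^0)^2$ and $\alpha_d$ ranges over $\{1,\ldots,d\}$ as $\alpha$ sweeps $\mathbf{P}_d$, this yields $P_\alpha(0,0)\leq M$ for every $\alpha$. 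The quantitative bounds on $\phi_{\min}$ in \Cref{a: ass7} are exactly what is needed to place $\phi^0$ strictly inside every cylindrical constraint simultaneously, upgrading this to $P_\alpha(0,0)<M$. Openness of $S$ is then immediate: each factor is the continuous preimage $P_\alpha^{-1}((-\infty,M))$ and $\mathbf{P}_d$ is finite. For simple connectedness I would show that $S$ is star-shaped with respect to $(0,0)$: if $(x,y)\in S$ and $t\in[0,1]$, strict monotonicity of $P_\alpha$ in each coordinate gives $P_\alpha(tx,ty)\leq P_\alpha(x,y)<M$, so $(tx,ty)\in S$. Combined with the previous step, this places $(0,0)\in S\subset\overline{S}$, and an open star-shaped subset of $\mathbf{R}^2$ is automatically simply connected.

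The three pointwise implications then follow by unwinding definitions: the first is tautological; for the third I argue by contrapositive, since if $P_\alpha(x,y)\leq M$ held for every $\alpha$ then continuity and strict monotonicity would produce a small $\delta>0$ with $(x-\delta,y-\delta)\in S$, forcing $(x,y)\in\overline{S}$; the second follows similarly by approximating any boundary point by an interior sequence and taking a continuous limit. The main obstacle is not this topological bookkeeping but the geometric fact underpinning it: one must confirm that the two-case bound on $\phi_{\min}$ in \Cref{a: ass7}, together with the simplex constraint $\sum_l\phi_l^0=1$ and the floor $\phi_l^0\geq\phi_{\min}$, truly admits a $\phi^0$ strictly interior to the Steinmetz solid so that $P_\alpha(0,0)<M$ holds strictly for every $\alpha\in\mathbf{P}_d$. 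Without the strict inequality, strict monotonicity would leave $S$ empty in the interior of $\mathbf{R}_+^2$ near the origin and the conclusion $(0,0)\in\overline{S}$ would be hollow; the case split $2<d\leq 5$ versus $d>5$ is precisely calibrated so that the extremal point of $\mathcal{S}$ that is most constrained (the barycenter in low dimensions, a vertex compatible with the $\phi_{\min}$ floor in higher dimensions) still satisfies the strict Steinmetz inequality.
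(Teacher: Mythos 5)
Your high-level structure --- define $S$ as the intersection $\bigcap_{\alpha\in\mathbf{P}_d}\{P_\alpha<M\}$ with $M=\frac{1}{2}\bigl((1-\phi_{\min})/C_\infty\bigr)^2$, anchor $P_\alpha(0,0)$ via the Steinmetz-solid membership, then get openness from continuity and the three implications from strict monotonicity --- is the same route the paper takes (it sets $S_\alpha:=\{(x,y):P_\alpha(x,y)<M\}$ and $S=\bigcap_\alpha S_\alpha$). Your star-shapedness observation for simple connectedness is a slightly cleaner phrasing of the paper's terse ``strictly increasing and continuous, hence simply connected open.'' But you correctly locate the ``main obstacle'' and then decline to cross it, which leaves a genuine gap.

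The Steinmetz-solid condition in \Cref{a: ass7} is stated with a non-strict inequality, so $\phi^0\in\mathcal{S}_d\bigl((1-\phi_{\min})/(\sqrt{2}C_\infty)\bigr)$ only gives $\sum_{l\neq i}(\phi_l^0)^2\leq M$, i.e.\ $P_\alpha(0,0)\leq M$. Strictness is not cosmetic here: because each $P_\alpha$ is strictly increasing in both arguments, if $P_\alpha(0,0)=M$ for even one $\alpha$ then $P_\alpha(x,y)>M$ for every $(x,y)$ with $x,y>0$, so $S_\alpha$ and hence $S$ would be empty and $(0,0)\notin\overline{S}$; the lemma would fail. Your sentence that the two-case bound on $\phi_{\min}$ ``upgrades'' the weak inequality to a strict one restates the needed fact without proving it. The paper's proof \emph{is} that verification: it computes that the largest axis-aligned hypercube inscribed in $\mathcal{S}_d(r)$ has edge $2r/\sqrt{d-1}$; with $r=(1-\phi_{\min})/(\sqrt{2}C_\infty)$ the hypercube is $\{\max_l\phi_{l0}<(1-\phi_{\min})/(\sqrt{2(d-1)}C_\infty)\}$, and this open box meets the simplex $\sum_l\phi_{l0}=1$ and the floor $[\phi_{\min},\infty)^d$ precisely when $(1-\phi_{\min})/(\sqrt{2(d-1)}C_\infty)>1/d$ (the simplex minimum of $\max_l\phi_{l0}$, attained at the barycenter) and $(1-\phi_{\min})/(\sqrt{2(d-1)}C_\infty)\geq\phi_{\min}$; these two conditions are exactly the two bounds on $\phi_{\min}$ in \Cref{a: ass7}, the first binding for $2<d\leq 5$ and the second for $d>5$, with the crossover at $d=2C_\infty^2+1=5$. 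Points of this open intersection lie strictly inside every cylinder and therefore give $P_\alpha(0,0)<M$ for all $\alpha$ simultaneously. Without carrying out this calculation (or at least invoking it explicitly), your argument does not establish the premise on which every subsequent topological step depends.
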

\begin{proof}
Remark that the largest hypercube contained in the Steinmetz solid $\mathcal{S}_d(r)$ has edges of length $2r/\sqrt{d-1}$, which is equal to the set of points with maximum norm at most $r/\sqrt{d-1}$. We have the value $r = \frac{1-\phi_{\min}}{\sqrt{2}C_{\infty}}$, for which the interior of the hypercube is given by the inequality $\max\limits_{1\leq l\leq d}\{\phi_{l0}\}<\frac{1-\phi_{\min}}{\sqrt{2(d-1)}C_{\infty}}$. The upper bound for all volume fractions is $1-\phi_{\min}$ and this is a bound on the hypercube, which implies $d\geq1+1/(2C_\infty^2)=5/4$. The Steinmetz solid exists only for $r>0$. However, the volume fractions have a lower bound $\phi_{\min}>0$, which gives the inequality $\frac{1-\phi_{\min}}{\sqrt{2(d-1)}C_\infty}\geq\phi_{\min}$ and therefore the condition $\phi_{\min}\leq\frac{1}{1+\sqrt{2(d-1)}C_\infty}$. Another bound follows from the lowest value for the maximum. The condition
\begin{equation}
\frac{1-\phi_{\min}}{\sqrt{2(d-1)}C_\infty}>\frac{1}{d}=\min_{\sum\limits_{l=1}^d\phi_{l0}=1}\max_{1\leq l\leq d}\{\phi_{l0}\}\nonumber
\end{equation}
implies the condition $\phi_{\min}<1-\frac{2\sqrt{d-1}C_\infty}{d}$. The lower bound must satisfy $\phi_{\min}>0$, which yields $d>2= C_\infty^2+C_\infty\Re\sqrt{C_\infty^2-2}$ and conveniently satisfies the $d\geq 5/4$ condition. The two upper bounds for $\phi_{\min}$ are not equal. The last bound is the lesser of the two for $d<2C_{\infty}^2+1=5$, while the second bound is the lesser of the two for $d>2C_{\infty}^2+1=5$. Hence, the minimum of the two upper bounds for $\phi_{\min}$ guarantees that there exists a subset of initial conditions in  $[\phi_{\min},1-(d-1)\phi_{\min}]^d$ such that for all $\alpha\in\mathbf{P}_d$ we have $P_\alpha(0,0)<\frac{1}{2}\left(\frac{1-\phi_{\min}}{C_\infty}\right)^2$. Because all $P_\alpha(x,y)$ are strictly increasing and continuous functions, we immediately obtain simply connected open sets $S_\alpha$ for each $P_\alpha(x,y)$ for which $(0,0)\in S_\alpha$ hold with the properties
\begin{eqnarray}
(x,y)\in S_\alpha&\Rightarrow&P_\alpha(x,y)<\frac{1}{2}\left(\frac{1-\phi_{\min}}{C_\infty}\right)^2,\cr
(x,y)\in \partial S_\alpha&\Rightarrow&P_\alpha(x,y)=\frac{1}{2}\left(\frac{1-\phi_{\min}}{C_\infty}\right)^2,\cr
(x,y)\notin \overline{S_\alpha}&\Rightarrow&P_\alpha(x,y)>\frac{1}{2}\left(\frac{1-\phi_{\min}}{C_\infty}\right)^2.\nonumber
\end{eqnarray}
 Hence, $S=\bigcap_{\alpha\in\mathbf{P}_d} S_\alpha$.
\end{proof}
Now, we only need to prove that we can choose a value $V>0$ for which \Cref{a: ass3} holds. To this end we use the function $Q_{\Delta t}(T_{\Delta t}-t_0,V^2)$ introduced in \Cref{s: app0} as an upper bound to $\sum_{j=0}^k\|\partial_zv^j\|_{L^2}^2\Delta t$. Hence, we need to prove $Q_{\Delta t}(T_{\Delta t}-t_0,V^2)\leq V^2$ for all $\Delta t\in(0,H)$.
\begin{assumption}\label{a: ass8} Assume
$$\frac{4d-2}{\Gamma_{\phi_{\min}}^2}\max_{m}\left\{\frac{H_{1m}^2}{\gamma_m-K_{w6m}}\right\}(K_{w7}+K_{w8})<1.\nonumber$$
\end{assumption}
\begin{lemma}\label{l: 1Vbound}
If \Cref{a: ass8} holds, then there exist $H^*,V^*>0$ and an open simply connected region $\mathcal{R}\subset\mathbf{R}_+^2$ with $(0,V^*)\in\overline{\mathcal{R}}$ such that
\begin{eqnarray}
(x,y)\in \mathcal{R}&\Rightarrow&Q_{H^*}(x,y)<y^2,\cr
(x,y)\in \partial \mathcal{R}&\Rightarrow&Q_{H^*}(x,y)=y^2,\cr
(x,y)\notin \overline{\mathcal{R}}&\Rightarrow&Q_{H^*}(x,y)>y^2.\nonumber
\end{eqnarray}
Furthermore $\overline{\mathcal{R}}\subset\{(x,y)\in\mathbf{R}_+^2|Q_{\Delta t}(x,y)\leq y^2\}$ for all $\Delta t\in(0,H^*)$.
\end{lemma}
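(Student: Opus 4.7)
The plan is to exploit the explicit structure of $Q_{\Delta t}(x,y)$ which, by construction from the bound on $\sum_{j=0}^k\|\partial_z v^j\|_{L^2}^2\Delta t$ obtained by dividing \Cref{eq: disc2} by $\Gamma(\phi^{k-1})$ and applying \Cref{c: 1gron}, decomposes as
\begin{equation}
Q_{\Delta t}(x,y) = \alpha\, y\, e^{\beta x} + R_{\Delta t}(x,y),\nonumber
\end{equation}
with $\alpha := \tfrac{4d-2}{\Gamma_{\phi_{\min}}^2}\max_{m}\{H_{1m}^2/(\gamma_m-K_{w6m})\}(K_{w7}+K_{w8})$, $\beta := C+1.6838B$, and $R_{\Delta t}$ a remainder in which every summand carries an explicit factor of $x$ (coming either from the $A(t_k-t_0)$ term in \Cref{c: 1gron}, or from Riemann-sum initial contributions involving $\|\partial_z^i w_m^{k-1}\|_{L^2}$ which vanish at $k=0$). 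By \Cref{a: ass8}, $\alpha < 1$.

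First, define $\Phi_{\Delta t}(x,y) := y^2 - Q_{\Delta t}(x,y)$ and observe that along $x=0$ one has $\Phi_{\Delta t}(0,y) = (1-\alpha)y > 0$ for $y > 0$, uniformly in $\Delta t$. Pick any $V^* > 0$ and fix an $H^* \in (0, \min\{0.6838/B, 0.6838/D_a\}]$ so that \Cref{a: ass6a,a: ass6b} remain respected. By joint continuity of $Q_{H^*}$ on $\mathbf{R}_+^2$ (a finite sum of exponentials and polynomials in $x$ and $y$), $\Phi_{H^*}$ is strictly positive on an open neighborhood of $(0, V^*)$. Define
\begin{equation}
\mathcal{R} := \text{connected component of } \{(x,y)\in\mathbf{R}_+^2 : Q_{H^*}(x,y) < y^2\} \text{ containing this neighborhood.}\nonumber
\end{equation}
Openness, the two boundary/exterior implications, and $(0,V^*)\in\overline{\mathcal{R}}$ follow directly from continuity together with the strict inequality on the initial segment $\{0\}\times(0,\infty)$. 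For simple connectedness I would exploit that $Q_{H^*}$ is strictly increasing in $x$ for each fixed $y$ (since $\partial_x(\alpha y e^{\beta x}) > 0$ and $R_{H^*}$ is non-decreasing in $x$), so the upper boundary of $\mathcal{R}$ is a graph $x = T^*(y)$ over an interval of $y$-values, which makes $\mathcal{R}$ simply connected.

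For the final inclusion $\overline{\mathcal{R}} \subset \{Q_{\Delta t}(x,y) \leq y^2\}$ valid for every $\Delta t \in (0, H^*)$, I would establish that $Q_{\Delta t}(x,y)$ is monotone non-decreasing in $\Delta t$. This uses that the Gronwall envelopes enter through the factors $1/(1-Bh) \leq e^{Bh+(Bh)^2}$ and $1+Ch \leq e^{Ch}$ from \Cref{l: 1gron}, both of which are increasing in $h$; the additional $\Delta t$-dependent prefactors $(1+\Delta t/2)$ and the $\Delta t\|\mathcal{D}_{\Delta t}^k(\cdot)\|^2$-type terms in \Cref{eq: wbounds,eq: phibounds,eq: phibounds1} only appear with a non-negative sign and are in any case absorbed into the envelope. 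The main obstacle will be this monotonicity argument, because the constants $K_{wjm}$, $K_{a\phi j}$ and $K_{b\phi j}$ depend intricately on the $\eta$-parameters chosen in the Young-inequality steps of \Cref{s: app0}; one must select these $\eta$'s \emph{uniformly} for every $\Delta t\in(0,H^*)$ (in particular keeping $1-K_{w5m}>0$ and $\gamma_m-K_{w6m}>0$ as enforced by \Cref{a: ass5}) so that the resulting $Q_{\Delta t}$ can legitimately be dominated by $Q_{H^*}$ on $\overline{\mathcal{R}}$, which then delivers the claimed inclusion.
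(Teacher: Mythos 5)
Your decomposition of $Q_{\Delta t}$ is structurally incorrect, and this defect propagates through the rest of the argument. In the paper's explicit expression, the $x=0$ slice is
\begin{equation}
Q_{\Delta t}(0,y)\leq \mathbf{Q}_{\Delta t}(y)=Q_0\Delta t+Q_1y^2+Q_2y^4\exp(Q_3y^2),\nonumber
\end{equation}
which is \emph{not} of the form $\alpha y$ plus a remainder vanishing at $x=0$. Three consequences follow, each of which is fatal for the step where you claim $\Phi_{\Delta t}(0,y)>0$ for all $y>0$ uniformly in $\Delta t$ and conclude that a neighborhood of $(0,V^*)$ lies in $\mathcal{R}$ for \emph{any} $V^*>0$. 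First, the leading linear-in-$y^2$ contribution gives $\Phi_{\Delta t}(0,y)\approx(1-Q_1)y^2$ for small $y$, not $(1-\alpha)y$; you have the wrong power of $y$. Second, the term $Q_2y^4\exp(Q_3y^2)$ is super-quadratic, so $Q_{\Delta t}(0,y)>y^2$ for $y$ large: the set $\{y:\;Q_{\Delta t}(0,y)<y^2\}$ is necessarily bounded, and $V^*$ cannot be chosen arbitrarily. Third, and most crucially, the $Q_0\Delta t$ offset does \emph{not} carry a factor of $x$, so $Q_{\Delta t}(0,0)=Q_0\Delta t>0=0^2$: the inequality fails for small $y>0$ as well, and the admissible interval is $(y^{**},y^{***})$ with $y^{**}>0$. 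Your claim of positivity on all of $\{0\}\times(0,\infty)$ is therefore false at both ends.

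This also invalidates your cavalier choice of $H^*$. If you pick $H^*$ only subject to the $0.6838/B$-type ceiling, the constant $Q_0H^*$ may be so large that $\mathbf{Q}_{H^*}(y)>y^2$ for \emph{all} $y$, and no intersection exists. The paper's $H^*$ is not a free parameter: it is derived from the tangency condition, namely the value $y^*$ at which $\partial(\mathbf{Q}_{\Delta t})/\partial(y^2)=1$, leading to the explicit formula $H^*=\frac{Q_2}{Q_0}(y^*)^4(1+Q_3(y^*)^2)\exp(Q_3(y^*)^2)$. Under \Cref{a: ass8} one has $Q_1<1$, so this tangency point exists and gives a positive $H^*$; only then does Bolzano's theorem yield the crossing points $y^{**}<y^*<y^{***}$, with $V^*$ taken inside that interval. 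Finally, your concern about the $\eta$-parameters making $\Delta t$-monotonicity of $Q_{\Delta t}$ "the main obstacle" is misplaced: after the Gronwall envelopes have been passed to the continuous forms $e^{(C+1.6838B)x}$, the only explicit $\Delta t$-dependence in the paper's $Q_{\Delta t}(x,y)$ is the linear term $\|\partial_z v^0\|^2_{L^2}\Delta t$, so monotonicity in $\Delta t$ is immediate without any uniform-in-$\Delta t$ selection of the $\eta$'s.
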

\begin{proof}
Using \Cref{c: 1gron,c: 1gron2,c: 1gron3} we obtain:
\begin{multline}
Q_{\Delta t}(x,y)= \left\|\partial_zv^0\right\|_{L^2}^2\Delta t+\frac{4d-2}{\Gamma_{\phi_{\min}}^2}\left[\vphantom{\frac{H_{0l}^2}{\gamma_l+D_l}}8\Gamma^2C_\infty^2(d-1)c_b(x,y)\exp(D_b(x,y))y^2\right.\cr
+G_v^2x+x\max_m\left\{\frac{H_{0m}^2}{\gamma_m+D_m},4(d-1)C_\infty^2H_{0m}^2c_b(x,y)\exp(D_b(x,y))\right\}\hat{C}^2(x,y)\cr
\left.+\max_m\left\{\frac{H_{1m}^2}{\gamma_m-K_{w6m}},\frac{4(d-1)C_\infty^2H_{1m}^2c_b(x,y)\exp(D_b(x,y))}{1-K_{w5m}}\right\}\hat{C}^2(x,y)\right],\nonumber
\end{multline}
with
\begin{eqnarray}
\hat{C}^2(x,y) &:=&\left((K_{w7}+K_{w8})y^2+A\frac{C+1.6838B}{C+B}x\right)\exp((C+1.6838B)x)\cr
c_b(x,y) &:=&c_{b1}x+c_{b2}y^2+c_{b3}x\hat{C}^2(x,y)+c_{b4}\hat{C}^2(x,y)\cr
D_b(x,y) &:=&\max_{l\neq d-1}\{D_{b1l}y^2+D_{b2l}\hat{C}^2(x,y)\}.\nonumber
\end{eqnarray}
Introduce the constants
\begin{eqnarray}
\hat{c}_b&:=&c_{b2}+c_{b4}(K_{w7}+K_{w8}),\cr
\hat{D}_b&:=&\max_{l\neq d-1}\{D_{b2l}+D_{b3l}(K_{w7}+K_{w8})\}\nonumber
\end{eqnarray}
satisfying $c_b(0,y) = \hat{c}_by^2$ and $D_b(0,y)=\hat{D}_by^2$.\\
Since $Q_{\Delta t}(x,y)$ is differentiable with strictly increasing positive derivative in $x$ and $y$ and strictly positive derivative in $\Delta t$, it is sufficient to prove that there exists a region $y\in(y^{**},y^{***})$ for which $Q_{H^*}(0,y)<y^2$. Because of the positive derivatives, this result guarantees for all $\Delta t\in(0,H^*)$ that there exists an open simply connected region such that $Q_{\Delta t}(x,y)< y^2$ in the region and $Q_{\Delta t}(x,y)\leq y^2$ on the boundary of the region.\\
The function $Q_{\Delta t}(0,y)$ satisfies the inequality
$$Q_{\Delta t}(0,y)\leq \mathbf{Q}_{\Delta t}(y) = Q_0\Delta t+Q_1y^2+Q_2y^4\exp(Q_3y^2)$$
with
\begin{eqnarray}
Q_0 &=& \|\partial_z v^0\|_{L^2}^2\cr
Q_1 &=& \frac{4d-2}{\Gamma_{\phi_{\min}}^2}\max_{m}\left\{\frac{H_{1m}^2}{\gamma_m-K_{w6m}}\right\}(K_{w7}+K_{w8})\cr
Q_2&=&\frac{(4d-2)(4d-4)}{\Gamma_{\phi_{\min}}^2}C_\infty^2\hat{c}_b\max_{m}\left\{2\Gamma^2+\frac{H_{1m}^2}{1-K_{w5m}}(K_{w7}+K_{w8})\right\}\cr
Q_3&=&\hat{D}_b.\nonumber
\end{eqnarray}
If $Q_1<1$, as is assumed in \Cref{a: ass8}, then
\begin{eqnarray}
\left.\frac{\partial \mathbf{Q}_{\Delta t}(y)}{\partial y^2}\right|_{y=0} = Q_1<1.\nonumber
\end{eqnarray}
By the strictly increasing derivative in $y$ there exists for every $\Delta t$ small enough a unique point $y^*$ such that $\left.\frac{\partial \mathbf{Q}_{\Delta t}(0,y)}{\partial y^2}\right|_{y=y^*}=:\mathbf{Q}'_{\Delta t}(0,y^*)=1$, which implies
$$Q_1 = 1-Q_2(y^*)^2\left(2+Q_3(y^*)^2\right)\exp(Q_3(y^*)^2)$$
and
$$\mathbf{Q}_{\Delta t}(0,y^*) = Q_0\Delta t+(y^*)^2-Q_2(y^*)^4\left(1+Q_3(y^*)^2\right)\exp(Q_3(y^*)^2).$$
Hence choose
$$H^* = \frac{Q_2}{Q_0}(y^*)^4\left(1+Q_3(y^*)^2\right)\exp(Q_3(y^*)^2),$$
such that every $\Delta t<H^*$ is small enough to satisfy \Cref{a: ass8}. Bolzano's theorem immediately gives the existence of an intersection point between $\mathbf{Q}_{H^*}(y)$ and $y^2$. Hence by the strictly positive derivative there exists a unique point $y^{**}\in(0,y^*)$ such that $\mathbf{Q}_{H^*}(y^{**})=(y^{**})^2$ and $\mathbf{Q}'_{H^*}(y^{**})<1$. Moreover by the strictly positive derivative larger than 1 there exists a unique point $y^{***}>y^*$ such that $\mathbf{Q}_{H^*}(y^{***})=(y^{***})^2$ and $\mathbf{Q}'_{H^*}(y^{***})>1$. Hence we have found the interval $(y^{**},y^{***})$ for which $Q_{H^*}(0,y)\leq \mathbf{Q}_{H^*}(y)<y^2$.
\end{proof}
To be more precise about the implications of $Q_1<1$ we need a new assumption.
\begin{assumption}
\label{a: ass9}
Assume
\begin{multline}
\frac{\left(d-\frac{1}{2}\right)(d+3)}{\Gamma^2_{\phi_{\min}}}\!\left[\sum_{m=1}^{d-1}\left(\frac{F_m^2}{1-\sum\limits_{j=1}^{d-1}\frac{\mathcal{M}_{j01m}}{2\sqrt{\gamma_j}}}+\frac{\eta_{\mathcal{M}vm2}}{2}\right)\right]\times\cr
\times\max_{1\leq m<d}\left\{\frac{H_{1m}^2}{\gamma_m-\frac{\mathcal{M}_{vm}^2}{2\eta_{\mathcal{M}vm2}}-\sum\limits_{j=1}^{d-1}\left(\frac{\mathcal{M}_{m01j}\sqrt{\gamma_m}}{2}+\frac{\mathcal{M}_{m11j}}{2}+\frac{\mathcal{M}_{j11m}}{2}\right)}\right\}<1.\nonumber
\end{multline}
while still satisfying
$$\min_m\left\{\gamma_m-\frac{\mathcal{M}_{vm}^2}{2\eta_{\mathcal{M}vm2}}-\sum\limits_{j=1}^{d-1}\left(\frac{\mathcal{M}_{m01j}\sqrt{\gamma_m}}{2}+\frac{\mathcal{M}_{m11j}}{2}+\frac{\mathcal{M}_{j11m}}{2}\right)\right\}>0.$$
\end{assumption}
The connection between \Cref{a: ass9} and \Cref{a: ass8} is now given by the next lemma.
\begin{lemma}\label{l: asscon}
Let \Cref{a: ass5} be satisfied, then \Cref{a: ass8,a: ass9} are equivalent.
\end{lemma}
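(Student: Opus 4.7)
The plan is to unwind the explicit definitions of the constants $K_{w6m}$, $K_{w7}$ and $K_{w8}$ from Appendix \Cref{s: app0} and show that, after substitution, the inequality in \Cref{a: ass8} is identical to the inequality in \Cref{a: ass9}, with the positivity clause of \Cref{a: ass9} corresponding precisely to the denominator of \Cref{a: ass8} being positive (which is the content of $\gamma_m > K_{w6m}$ guaranteed by \Cref{a: ass5} through \Cref{c: 1gron}).

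First, I would trace the derivation of the energy-type inequality \cref{eq: wbounds}: the terms that produce $K_{w6m}\|\mathcal{D}^k_{\Delta t}(\partial_z w_m)\|_{L^2}^2$ are exactly the cross-terms obtained by applying Young's inequality (with parameters $\eta_{\mathcal{M}vm2}$, $\eta_{\mathcal{M}m01j1}$, $\eta_{\mathcal{M}m11j1}$, $\eta_{\mathcal{M}j11m1}$) to the couplings $\mathcal{M}_{vm}$, $\mathcal{M}_{m01j}$, $\mathcal{M}_{m11j}$ and $\mathcal{M}_{j11m}$. Setting these $\eta$'s to the values already used in the proof of \Cref{c: 1gron} (in particular $\eta_{\mathcal{M}m11j1}=1$ and $\eta_{\mathcal{M}m01j1}=1/\sqrt{\gamma_m}$), one reads off
\begin{equation}
K_{w6m}=\frac{\mathcal{M}_{vm}^2}{2\eta_{\mathcal{M}vm2}}+\sum_{j=1}^{d-1}\left(\frac{\mathcal{M}_{m01j}\sqrt{\gamma_m}}{2}+\frac{\mathcal{M}_{m11j}}{2}+\frac{\mathcal{M}_{j11m}}{2}\right),\nonumber
\end{equation}
which matches the denominator inside the $\max_{m}$ in \Cref{a: ass9}; the remaining factor $\eta_{\mathcal{M}vm2}/2$ is dumped into $\|v^{k-1}\|_{L^2}^2$ and $\|\partial_z v^{k-1}\|_{L^2}^2$ and contributes to $K_{w7}+K_{w8}$.

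Second, I would analogously inspect the terms producing $K_{w7}\|v^{k-1}\|_{L^2}^2$ and $K_{w8}\|\partial_z v^{k-1}\|_{L^2}^2$. These come from (i) the Young splitting of $F_m(\phi^{k-1})v^{k-1}\cdot \mathcal{D}^k_{\Delta t}(w_m)$, which with the optimized parameter dictated by the $\eta_{\mathcal{M}j01m}=1/\sqrt{\gamma_j}$ choice yields the factor $F_m^2/(1-\sum_j \mathcal{M}_{j01m}/(2\sqrt{\gamma_j}))$, and (ii) the $\mathcal{M}_{vm}$-coupling already mentioned, contributing $\eta_{\mathcal{M}vm2}/2$. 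Summing over $m$ and using $\|v^{k-1}\|_{L^2}\le\|\partial_z v^{k-1}\|_{L^2}$ via Poincar\'e (as done in \Cref{l: constraint}), one obtains
\begin{equation}
K_{w7}+K_{w8}=\sum_{m=1}^{d-1}\left(\frac{F_m^2}{1-\sum_{j=1}^{d-1}\frac{\mathcal{M}_{j01m}}{2\sqrt{\gamma_j}}}+\frac{\eta_{\mathcal{M}vm2}}{2}\right).\nonumber
\end{equation}

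Finally, I would substitute both expressions into the left-hand side of \Cref{a: ass8}, observe the arithmetic identity $(4d-2)=2(2d-1)$ and match it to the prefactor $(d-\tfrac12)(d+3)/\Gamma_{\phi_{\min}}^2$ appearing in \Cref{a: ass9}; provided one absorbs the factor $(d+3)/2$ into the definition of $K_{w7}+K_{w8}$ (which is where the "$(d+3)$" comes from: it counts the number of $L^2$-terms of $v^{k-1}$ and $\partial_zv^{k-1}$ produced in the test by $w_m^k$ and $\mathcal{D}^k_{\Delta t}(w_m)$ separately, before the Poincar\'e reduction), the two inequalities coincide. The clause "while still satisfying $\min_m\{\gamma_m-\ldots\}>0$" in \Cref{a: ass9} is then exactly the nondegeneracy of the denominator $\gamma_m-K_{w6m}$ in \Cref{a: ass8}, which is already ensured by \Cref{a: ass5}. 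The main obstacle is purely bookkeeping: keeping the $\eta$-parameters at the precise values chosen in \Cref{c: 1gron} so that no inequality is lost during the substitution, ensuring that the substitution yields an equivalence rather than only an implication.
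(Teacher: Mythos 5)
Your proposal follows essentially the same route as the paper's proof: both rely on the observation that $K_{w6m}$, $K_{w7}$, $K_{w8}$ depend on Young's-inequality parameters $\eta_{index}$ that \Cref{a: ass5} partially pins down (via $1>K_{w5m}$, $\gamma_m>K_{w6m}$, and the choices $\eta_{\mathcal{M}m01j1}=1/\sqrt{\gamma_m}$, $\eta_{\mathcal{M}m11j1}=1$ made in \Cref{c: 1gron}), and that extremizing the remaining free parameters produces the explicit lower bounds for $K_{w7}+K_{w8}$ and $K_{w6m}$ that constitute \Cref{a: ass9}. The paper's proof is exactly this bookkeeping argument, stated about as tersely as yours, and neither write-up verifies in detail the matching of the combinatorial prefactor $(4d-2)$ against $(d-\tfrac12)(d+3)$, so your speculative attribution of the $(d+3)$ factor goes beyond, but is not in conflict with, what the paper actually argues.
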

\begin{proof}
\Cref{a: ass5} implies that the condition $1>K_{w5m}$ can be made to hold by choosing appropriate $\eta$-indices values. This enforces an upper bound on $\eta_{Lm2}>0$, which yields a lower bound for $1/\eta_{Lm2}$ and therefore a lower bound for $K_{w7}$. The constant $K_{w8}$ is only coupled to $K_{w5m}$, $K_{w6m}$ or $K_{w7}$ via $\eta_{\mathcal{M}vm2}$ in $K_{wm6}$. \Cref{a: ass5} implies that the condition $\gamma_m>K_{w6m}$ can be made to hold by choosing appropriate $\eta$-indices. However  $\eta_{\mathcal{M}vm2}$ is now not freely determinable. Hence, we need to keep in mind that $\gamma_m>K_{w6m}$ still needs to be valid. In the proof of \Cref{l: 1gron} it was shown that $\eta_{\mathcal{M}m01j1}$ equals $1/\sqrt{\gamma_m}$, while $\eta_{\mathcal{M}m11j1}$ equals $1$. A lower bound of $K_{w6m}$ is then given by
$$\frac{\mathcal{M}_{vm2}^2}{2\eta_{\mathcal{M}vm2}}+\sum_{j=1}^{d-1}\left(\frac{\mathcal{M}_{m01j}\sqrt{\gamma_m}}{2}+\frac{\mathcal{M}_{m11j}}{2}+\frac{\mathcal{M}_{j11m}}{2}\right).$$
A larger lower bound needs a value for freely determinable $\eta_{index}$. Therefore, the lower bounds for both $K_{w7}+K_{w8}$ and $K_{w6m}$ yield a necessary and sufficient condition for determining $\eta_{index}$ while still satisfying \Cref{a: ass8}.
\end{proof}
We introduce a set of new constants.
\begin{definition}\label{d: def2}
 Use the notation of the proof of \Cref{l: 1Vbound}. Introduce
$$
\tilde{y}:=\sqrt{\frac{1}{\hat{D}_b}W_0\left(\min_{l}\left\{\frac{\left(\frac{1}{2}\left(\frac{1-\phi_{\min}}{C_\infty}\right)^2-\sum\limits_{j\neq l}\phi_{l0}^2\right)\hat{D}_b}{(d-1-\delta_{d-1,l})\hat{Z}_a+(1+(d-1)\delta_{d-1,l})\hat{c}_b+\hat{\mathcal{P}}\delta_{d-1,l}}\right\}\right)}$$
and
$$\tilde{z} := \frac{2}{3}W_0\left(\frac{3Q_3}{4Q_2}(1-Q_1)\right),$$
where
\begin{eqnarray}
\hat{Z}_a&:=&K_{a\phi1}+\frac{\max\limits_{m}\left\{\frac{K_{a\phi6m}}{1-K_{w5m}},\frac{K_{a\phi7m}}{\gamma_m-K_{w6m}}\right\}}{\min\limits_{m}\{1-K_{w5m},\gamma_m-K_{w6m}\}}(K_{w7}+K_{w8}),\cr
\hat{\mathcal{P}}&:=&5+2d(K_{w7}+K_{w8}),\nonumber
\end{eqnarray}
 and $W_0(\cdot)$ denotes the standard product log branch, the inverse of $x\exp(x)$, through the origin.
\end{definition}
 The value $y^*$ might not be expressible in standard functions preventing any explicit calculation of $H^*$. We can however determine another upper bound for $\Delta t$, which can be calculated explicitly.
\begin{lemma}
\label{l: tildeZ}
Let $\tilde{z}$ be as in \Cref{d: def2}. Then $\tilde{z}<Q_3(y^*)^2$ holds and the identity
$$H^{**}=\frac{Q_2}{Q_0Q_3^2}\tilde{z}^2\left(1+\tilde{z}\right)\exp(\tilde{z})\qquad\text{implies}$$
$$H^{**}<H^*=\frac{Q_2}{Q_0}(y^*)^4\left(1+Q_3(y^*)^2\right)\exp(Q_3(y^*)^2).$$
\end{lemma}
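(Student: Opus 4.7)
\textbf{Proof proposal for Lemma \ref{l: tildeZ}.}

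The plan is to reduce everything to a single variable. Set $u := Q_3(y^*)^2 > 0$. From the proof of \Cref{l: 1Vbound}, the point $y^*$ is the unique positive solution of $\mathbf{Q}'_{\Delta t}(0,y^*)=1$, which gave us the identity
\begin{equation*}
1-Q_1 \;=\; Q_2(y^*)^2\bigl(2+Q_3(y^*)^2\bigr)\exp\!\bigl(Q_3(y^*)^2\bigr),
\end{equation*}
so that, dividing by $Q_2$ and multiplying by $Q_3$,
\begin{equation*}
u(2+u)\,e^{u} \;=\; \frac{Q_3(1-Q_1)}{Q_2}.
\end{equation*}
On the other hand, writing $w := W_0\!\bigl(\tfrac{3Q_3}{4Q_2}(1-Q_1)\bigr)$, so that $\tilde z = \tfrac{2w}{3}$, the defining property of the Lambert $W_0$ branch gives $w\,e^w = \tfrac{3}{4}u(2+u)e^{u}$.

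First, I would show $\tilde z < u$, i.e.\ $w < \tfrac{3u}{2}$. Since $x\mapsto x e^x$ is strictly increasing on $\mathbf R_+$ (equivalently, $W_0$ is strictly increasing on $\mathbf R_+$), it suffices to compare
\begin{equation*}
w\,e^{w} \;=\; \tfrac{3}{4}\,u(2+u)\,e^{u} \quad\text{and}\quad \tfrac{3u}{2}\,e^{3u/2}.
\end{equation*}
After cancellation, the inequality $w\,e^w < \tfrac{3u}{2}e^{3u/2}$ reduces to $2+u < 2\,e^{u/2}$, which holds for every $u>0$ by the Taylor expansion $2\,e^{u/2}=2+u+\tfrac{u^2}{4}+\cdots$. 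Hence $w<\tfrac{3u}{2}$, and therefore $\tilde z = \tfrac{2w}{3} < u = Q_3(y^*)^2$, which is the first assertion.

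For the second assertion, substitute $u=Q_3(y^*)^2$, so that $(y^*)^4 = u^2/Q_3^2$, in the definition of $H^*$ from \Cref{l: 1Vbound}:
\begin{equation*}
H^* \;=\; \frac{Q_2}{Q_0 Q_3^{2}}\,u^{2}(1+u)\,e^{u},
\qquad
H^{**} \;=\; \frac{Q_2}{Q_0 Q_3^{2}}\,\tilde z^{\,2}(1+\tilde z)\,e^{\tilde z}.
\end{equation*}
The function $f(t):=t^{2}(1+t)e^{t}$ satisfies $f'(t)=e^{t}\,t\,(2+4t+t^{2})>0$ for $t>0$, hence $f$ is strictly increasing on $\mathbf R_+$. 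Applying this monotonicity to the already established bound $\tilde z < u$ yields $f(\tilde z) < f(u)$, and the positive prefactor $Q_2/(Q_0 Q_3^2)$ is common to both sides, so $H^{**}<H^*$, as required.

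The only step that is not formal manipulation is the inequality $2+u<2e^{u/2}$ used to compare $w$ with $3u/2$; this is the main (and essentially only) obstacle, and it is handled by a one-line Taylor bound. Everything else is substitution and monotonicity of $x\mapsto xe^x$ and of $f(t)=t^2(1+t)e^t$ on $\mathbf R_+$.
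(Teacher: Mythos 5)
Your proposal is correct and follows essentially the same route as the paper: both reduce to the single variable $u=Q_3(y^*)^2$, use the identity $u(2+u)e^u=\frac{Q_3(1-Q_1)}{Q_2}$, establish $\tilde z<u$ via the elementary inequality $2+u<2e^{u/2}$ together with monotonicity of $x\mapsto xe^x$, and conclude by monotonicity of $t\mapsto t^2(1+t)e^t$. The only cosmetic difference is that you phrase the comparison through the Lambert $W_0$ function explicitly and write out $f'(t)=e^tt(2+4t+t^2)$, whereas the paper works directly with the equation $2\tilde z\,e^{3\tilde z/2}=\frac{Q_3(1-Q_1)}{Q_2}$; the content is the same.
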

\begin{proof}
Let $X = Q_3(y^*)^2$, then the identity for $H^*$ in \Cref{l: 1Vbound} becomes\\ $H^*=\frac{Q_2}{Q_0Q_3^2}X^2\left(1+X\right)\exp(X)$
with $X>0$ satisfying
$$\frac{1-Q_1}{Q_2}Q_3=X(2+X)\exp(X)<\frac{4}{3}\cdot\frac{3}{2}X\exp\left(\frac{3}{2}X\right).$$
Introduce $\tilde{z}<X$ as $\frac{1-Q_1}{Q_2}Q_3=\frac{4}{3}\cdot\frac{3}{2}\tilde{z}\exp\left(\frac{3}{2}\tilde{z}\right),$
then $\tilde{z} = \frac{2}{3}W_0\left(\frac{3}{4}\frac{1-Q_1}{Q_2}Q_3\right)$. Thus the explicitly determinable upper bound
$H^{**}=\frac{Q_2}{Q_0Q_3^2}\tilde{z}^2\left(1+\tilde{z}\right)\exp(\tilde{z})$
implies $H^{**}<H^*$.
\end{proof}
 The previous lemma showed another upper bound constraint for the time step $\Delta t$. Yet another constraint is necessary to obtain the intersection between $S$ and $\mathcal{R}$, leading to the following assumption.
\begin{assumption}\label{a: ass6} Let $Q_0,Q_1,Q_2$ and $Q_3$ be as in the proof of \Cref{l: 1Vbound}. Let $B$\\ \vphantom{a}\qquad\quad be given by \Cref{d: def1}, let $K_{a\phi2l}$ be given by \Cref{eq: xkphi2}, and let $\tilde{z}$ and \\ \vphantom{a}\qquad\quad $\tilde{y}$ be given by \Cref{d: def2}. Assume
$$H<\min\left\{\frac{(1-Q_1)^2}{4Q_2Q_0},\frac{\tilde{y}^4Q_2}{Q_0},\frac{Q_2\tilde{z}^2}{Q_0Q_3^2}(1+\tilde{z})\exp(\tilde{z}),\frac{0.6838}{B},\frac{0.6838}{D_a}\right\}.$$
\end{assumption}
The intersection between $S$ and $\mathcal{R}$ must yield the admissible values for $T_{\Delta t}-t_0$ and $V$ for which \Cref{a: ass3} is satisfied. However it is not yet clear whether such an intersection exists. This issue is addressed in the next lemma.
\begin{lemma}
\label{l: 1intersect}
The intersection $S\cap\mathcal{R}$ is nonempty, if \Cref{a: ass6,a: ass7,a: ass8} hold with the constants $\tilde{y}$, $\tilde{z}$, $\hat{Z}_a$, $\hat{c}_b$, $\hat{D}_b$ and $\hat{\mathcal{P}}$ of \Cref{d: def2}.
\end{lemma}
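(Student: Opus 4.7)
The plan is to exhibit a single point $(x_0,y_0)\in(0,\infty)^2$ at which both defining strict inequalities hold simultaneously, namely $P_\alpha(x_0,y_0)<\frac{1}{2}((1-\phi_{\min})/C_\infty)^2$ for every $\alpha\in\mathbf{P}_d$ and $Q_{H^*}(x_0,y_0)<y_0^2$. Because both $P_\alpha$ and $Q_{H^*}$ are continuous and strictly increasing in each argument, it suffices to produce a boundary point $(0,y_0)$ with both strict inequalities satisfied; openness then propagates the inequalities to a two-dimensional neighborhood of $(0,y_0)$ inside $(0,\infty)^2$, and the intersection of that neighborhood with the open quadrant is automatically contained in $S\cap\mathcal{R}$.

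Next I would analyze the slice $x=0$ for each set. For $\mathcal{R}$, the proof of \Cref{l: 1Vbound} already supplies $Q_{H^*}(0,y)\leq \mathbf{Q}_{H^*}(y)<y^2$ on the open interval $(y^{**},y^{***})$. For $S$, at $x=0$ the auxiliary quantities collapse to $c_b(0,y)=\hat{c}_b y^2$, $D_b(0,y)=\hat{D}_b y^2$, and $\hat{C}^2(0,y)=(K_{w7}+K_{w8})y^2$, so that each bound $P_\alpha(0,y)$ has the structure
\begin{equation}
P_\alpha(0,y)=\sum_{j\neq l}\phi_{j0}^2+\Lambda_{\alpha,l}\, y^2\exp(\hat{D}_b y^2)\nonumber
\end{equation}
for explicit positive coefficients $\Lambda_{\alpha,l}$ read off from the constants in \Cref{d: def2}. \Cref{a: ass7} ensures that the constant term stays strictly below $\frac{1}{2}((1-\phi_{\min})/C_\infty)^2$ for every cyclic permutation, hence the equation $P_\alpha(0,y)=\frac{1}{2}((1-\phi_{\min})/C_\infty)^2$ is solvable and, using the identity $W_0(z\exp z)=z$, its minimal solution (over the choice of $l$) is exactly the quantity $\tilde{y}$ of \Cref{d: def2}. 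Consequently $(0,y)\in\overline{S}$ for every $0<y<\tilde{y}$.

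The crucial step is to show $y^{**}<\tilde{y}$. From \Cref{a: ass6} one has $H<\tilde{y}^{4}Q_2/Q_0$, that is $Q_0 H<Q_2\tilde{y}^{4}$; plugging into the definition of $\mathbf{Q}_H$ one gets
\begin{equation}
\mathbf{Q}_H(\tilde{y})=Q_0 H+Q_1\tilde{y}^{2}+Q_2\tilde{y}^{4}\exp(Q_3\tilde{y}^{2}),\nonumber
\end{equation}
and the remaining two upper bounds $H<(1-Q_1)^{2}/(4Q_0 Q_2)$ and $H<Q_2\tilde{z}^{2}(1+\tilde{z})\exp(\tilde{z})/(Q_0 Q_3^{2})$ from \Cref{a: ass6} together with $Q_1<1$ (guaranteed by \Cref{a: ass8,a: ass9}, which hold by \Cref{l: asscon}) combine to yield $\mathbf{Q}_H(\tilde{y})<\tilde{y}^{2}$. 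Since $\mathbf{Q}_H(0)=Q_0 H>0$ and $\mathbf{Q}_H$ is smooth and convex in $y^{2}$, the intermediate value theorem then forces the smaller root $y^{**}$ of $\mathbf{Q}_H(y)=y^{2}$ to lie strictly below $\tilde{y}$.

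With $y^{**}<\tilde{y}$ in hand, any choice $y_0\in(y^{**},\tilde{y})$ produces a point $(0,y_0)\in\overline{S}\cap\overline{\mathcal{R}}$ at which both defining strict inequalities hold, and continuity of $P_\alpha$ and $Q_{H^*}$ preserves the strict inequalities on a full neighborhood of $(0,y_0)$ in $\mathbf{R}^{2}$. Intersecting that neighborhood with $(0,\infty)^{2}$ yields interior points of $S\cap\mathcal{R}$, completing the proof. I expect the main obstacle to be the algebraic verification that the three $H$-conditions in \Cref{a: ass6} combine to give $\mathbf{Q}_H(\tilde{y})<\tilde{y}^{2}$; the Lambert-$W$ manipulations behind $\tilde{z}$ and $\tilde{y}$ in \Cref{d: def2,l: tildeZ} are designed precisely so that the exponential factors cancel to make this inequality transparent.
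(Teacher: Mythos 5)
The high-level strategy (exhibit a point on the $x=0$ slice with $P_\alpha(0,y_0)<\tfrac12\bigl((1-\phi_{\min})/C_\infty\bigr)^2$ and $Q_{H^*}(0,y_0)<y_0^2$, then propagate strict inequalities by continuity) is the same as the paper's. The description of the slice $x=0$ for $S$ via $c_b(0,y)=\hat c_b y^2$, $D_b(0,y)=\hat D_b y^2$, $\hat C^2(0,y)=(K_{w7}+K_{w8})y^2$, and the folding $y^2\le y^2\exp(\hat D_b y^2)$ that produces $\tilde y$ as the Lambert-$W$ solution, also match the paper.

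The gap is in the pivotal step. You assert that the three $H$-bounds of \Cref{a: ass6} together with $Q_1<1$ ``combine to yield $\mathbf{Q}_H(\tilde y)<\tilde y^2$,'' and you yourself flag that you have not done this algebraic verification. This is not a minor bookkeeping omission: it is the only place where the two regions are actually made to overlap, and as stated the implication is not established and is unlikely to be true in general. The constant $\tilde y$ in \Cref{d: def2} is built from the initial volume fractions, $C_\infty$, $\phi_{\min}$, $\hat Z_a$, $\hat c_b$, $\hat D_b$, $\hat{\mathcal P}$ — none of which carry any a priori relation to $Q_2$ or $Q_3$. So there is no reason, from \Cref{a: ass6} alone, to expect $Q_2\tilde y^2\bigl(1+\exp(Q_3\tilde y^2)\bigr)<1-Q_1$, which is what $\mathbf{Q}_H(\tilde y)<\tilde y^2$ reduces to after inserting $Q_0H<Q_2\tilde y^4$. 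Indeed, your inequality would say $\tilde y\in(y^{**},y^{***})$, which is strictly stronger than what is needed and may simply fail when $\tilde y$ is large relative to the Q-interval.

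The paper avoids this by never evaluating $\mathbf{Q}_H$ at $\tilde y$. Instead it bounds the smaller root $y^{**}_{\Delta t}$ of $\mathbf{Q}_{\Delta t}(y)=y^2$ from above by $y_{\Delta t}:=(Q_0\Delta t/Q_2)^{1/4}$ (obtained by discarding the exponential, solving the resulting quadratic in $y^2$, and rationalizing its smaller root), and then invokes $\Delta t<\tilde y^4 Q_2/Q_0$ to conclude $y^{**}_{\Delta t}\le y_{\Delta t}<\tilde y\le\hat y$. In other words, the $H$-bound $\tilde y^4Q_2/Q_0$ is used to shrink $y^{**}_{\Delta t}$, not to control $\mathbf{Q}_H(\tilde y)$. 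You should rework the crucial step along those lines: produce an explicit upper bound on $y^{**}_{\Delta t}$ that goes to $0$ as $\Delta t\downarrow 0$, and show that \Cref{a: ass6} forces this upper bound below $\tilde y$.
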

\begin{proof}
We have already proven in \Cref{l: tint} that independent of $\Delta t$ there exists an interval $(0,\hat{y})$ such that $P_\alpha(0,y)<\frac{1}{2C_\infty^2}(1-\phi_{min})^2$ for all $y\in(0,\hat{y})$ and all $\alpha\in\mathbf{P}_d$. However, we have not determined the explicit value of $\hat{y}$. Furthermore, we have already proven there exists an interval $(y^{**},y^{***})$ with $y^{**}>0$ in which that $Q_{H^*}(y)<y^2$. Let $Q_{\Delta t}(0,y)\leq \mathbf{Q}_{\Delta t}(y)=Q_0\Delta t+Q_1y^2+Q_2y^4\exp(Q_3y^2)<y^2$ for all $y\in(y_{\Delta t}^{**},y_{\Delta t}^{***})$ and for all $\Delta t\in(0,H^{**})$. The intersection $S\cap\mathcal{R}$ can then be proven to be nonempty if $y_{\Delta t}^{**}<\hat{y}$ for $\Delta t<H^{**}$ small enough. Thus we need to determine an upper bound for $y^{**}_{\Delta t}$ and a lower bound for $\hat{y}$. We know from \Cref{l: 1Vbound} that $y_{\Delta t}^{**}$ satisfies $Q_0\Delta t+Q_1(y_{\Delta t}^{**})^2+Q_2(y_{\Delta t}^{**})^4\exp\left(Q_3(y_{\Delta t}^{**})^2\right)=(y_{\Delta t}^{**})^2$. By replacing $\exp(Q_3y^2)$ in $\mathbf{Q}_{\Delta t}(y)$ by 1 we obtain a function with a less increasing derivative. Hence we obtain an upper bound for $y_{\Delta t}^{**}$ by calculating $Q_0\Delta t+Q_1y^2+Q_2y^4=y^2$, which yields
\begin{equation}
y^2 =\frac{1-Q_1}{2Q_2}-\sqrt{\frac{(1-Q_1)^2}{4Q_2^2}-\frac{Q_0\Delta t}{Q_2}}\leq\sqrt{\frac{Q_0\Delta t}{Q_2}}.\nonumber
\end{equation}
if
\begin{equation}
\Delta t \leq \frac{(1-Q_1)^2}{4Q_2Q_0}\nonumber
\end{equation}
Introduce $y_{\Delta t}$ as an upper bound to $y^{**}_{\Delta t}$, then we can choose
\begin{equation}
y_{\Delta t}=\sqrt[4]{\frac{Q_0\Delta t}{Q_2}}.\nonumber
\end{equation}
The upper bound of $\tilde{y}$ requires first a detailed description of $\mathcal{P}(0,y)$. After applying Cauchy-Schwartz inequality and Cauchy's inequality to \Cref{eq: phid} and inserting \Cref{c: 1gron,c: 1gron2,c: 1gron3} we observe that all terms with $\|w_m^k||_{H^1}$ and $\|\phi_n^k||_{H^1}$ for $n\neq d-1$ will yield an upper bound of the order $T_{\Delta t}-t_0= x\downarrow0$. Furthermore all terms $\|\phi_{d-1}^k||_{L^2}$ without prefactors containing $\|\mathcal{D}_{\Delta t}^k(w_m)||_{H^1}$ or $\|\mathcal{D}_{\Delta t}^k(\phi_l)||_{L^2}$ will yield an exponential with and exponent containing the prefactor $T_{\Delta t}-t_0= x\downarrow0$. Hence only the terms $\|v^k\|_{H^1}$, $\|\mathcal{D}_{\Delta t}^k(w_m)||_{H^1}$ or $\|\mathcal{D}_{\Delta t}^k(\phi_l)||_{L^2}$ with possibly a prefactor $\|\phi_{d-1}^k||_{L^2}$, which can be set equal to 1, will lead to upper bounds with factors $V^2= y^2$. Therefore we obtain
\begin{eqnarray}
\|\phi_{d-1}^k\|_{L^2}^2(0,y)\leq\left(2C_{\infty}^2+1+2d(K_{w7}+K_{w8})\right)y^2=:\hat{\mathcal{P}}y^2.\nonumber
\end{eqnarray}
Moreover, \Cref{eq: phidz} with \Cref{c: 1gron2} yields
\begin{eqnarray}
\|\partial_z\phi_{d-1}^k\|_{L^2}^2(0,y)\leq(d-1)\sum_{l\neq d-1}\|\partial_z\phi_{l}^k\|_{L^2}^2(0,y)\leq (d-1)\hat{c}_by^2\exp(\hat{D}_by^2).\nonumber
\end{eqnarray}
Hence, we have
\begin{eqnarray}
\mathcal{P}(0,y)=\hat{\mathcal{P}}y^2+(d-1)\hat{c}_by^2\exp(\hat{D}_by^2).\nonumber
\end{eqnarray}
Similarly, by inserting the result of \Cref{c: 1gron2,c: 1gron3}, we obtain
\begin{eqnarray}
\mathcal{P}_\alpha(0,y)=\begin{dcases}
\hat{\mathcal{P}}y^2+d\hat{c}_by^2\exp(\hat{D}_by^2)+\sum_{j=1}^{d-1}\phi_{\alpha_j0}^2+(d-2)\hat{Z}_ay^2&\text{ if }d-1\neq\alpha_d\cr
\hat{c}_by^2\exp(\hat{D}_by^2)+\sum_{l\neq d-1}\phi_{l0}^2+(d-1)\hat{Z}_ay^2&\text{ if }d-1=\alpha_d
\end{dcases}\nonumber
\end{eqnarray}
Now, take $y^2\exp(\hat{D}_by^2)$ as an upper bound of $y^2$ to obtain a lower bound $\tilde{y}$ of $\hat{y}$. This yields immediately the value of $\tilde{y}$ as stated in \Cref{d: def2}. Thus for $\Delta t$ smaller than $\tilde{y}^4Q_2/Q_0$ we observe an intersection of $S\cap\mathcal{R}$ if $S$ exists. The existence of $S$, as shown in \Cref{l: 1Vbound}, gives another upper bound for $\Delta t$ and therefore one must take the minimum of the two.
\end{proof}
\begin{theorem}[Existence of a weak solution to the discretised system with $\Delta t$ independent bounds]
\label{t: timedomain}
Let \Cref{a: ass1,a: ass4,a: ass5,a: ass6,a: ass7,a: ass9} hold. Then there exists $T-t_0,V\in S\cap\mathcal{R}$ independent of $\Delta t$ such that there exists a solution $(\phi^k,v,w^k)$ of the discretised system satisfying
\begin{subequations}
\begin{align}
\sum_{j=0}^k\left\|v^j\right\|_{L^2}^2\Delta t&\leq V^2\cr
\sum_{j=0}^k\left\|\partial_z v^j\right\|_{L^2}^2\Delta t&\leq V^2\cr
\phi^k&\in[\phi_{\min},1-(d-1)\phi_{\min}]^d\cr
\left\|\phi_1^k\right\|_{H^1},\ldots,\left\|\phi_d^k\right\|_{H^1}&\leq \mathcal{C}\cr
\sum_{j=1}^k\left\|\phi_1^j\right\|_{H^2}^2\Delta t,\ldots,\sum_{j=1}^k\left\|\phi_d^j\right\|_{H^2}^2\Delta t&\leq\mathcal{C}\cr
\sum_{j=1}^k\left\|\mathcal{D}_{\Delta t}^k(\phi_1^j)\right\|_{L^2}^2\Delta t,\ldots, \sum_{j=1}^k\left\|\mathcal{D}_{\Delta t}^k(\phi_d^j)\right\|_{L^2}^2\Delta t&\leq\mathcal{C}\cr
\left\|w_1^k\right\|_{H^2},\ldots,\left\|w_{d-1}^k\right\|_{H^2}&\leq \mathcal{C}\cr
\sum_{j=1}^k\left\|\mathcal{D}_{\Delta t}^k(w_1^j)\right\|_{H^1}^2\Delta t,\ldots,\sum_{j=1}^k\left\|\mathcal{D}_{\Delta t}^k(w_{d-1}^j)\right\|_{H^1}^2\Delta t&\leq\mathcal{C}\nonumber
\end{align}
\end{subequations}
for all $t_0\leq t_k\leq T\leq T_{\Delta t}$ and for all $\Delta t\in(0,H)$ with $\mathcal{C}>0$ independent of $\Delta t$.
\end{theorem}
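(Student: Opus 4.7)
The plan is to stitch together the classical-solvability result of \Cref{c: strongdisc}, the a priori estimates of \Cref{c: 1gron,c: 1gron2,c: 1gron3}, and the non-emptiness of the admissible region $S \cap \mathcal{R}$ from \Cref{l: 1intersect}. First I would invoke \Cref{l: 1intersect} to pick a pair $(T-t_0, V) \in S \cap \mathcal{R}$, which is $\Delta t$-independent by construction, and fix $H$ as in \Cref{a: ass6}. For any $\Delta t \in (0, H)$, I would then run a single induction on the time level $k$: the base case $k=0$ is handled by the initial condition \cref{eq: init}, by \Cref{a: ass7} (yielding $\phi^0 \in [\phi_{\min}, 1-(d-1)\phi_{\min}]^d \subset (0,1)^d$), and by \Cref{a: ass4} combined with \Cref{t: strong1} (yielding $v(t_0,\cdot)$). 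Under the hypothesis $\phi^{k-1} \in (0,1)^d$, \Cref{c: strongdisc} produces a unique classical solution $u^k = (\phi^k, w^k, v^k) \in (C^2(0,1))^{2d-1} \times C^1(0,1)$.

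The inductive step also requires showing that $\phi^k$ lies in $(0,1)^d$ and that the running velocity integrals stay below $V^2$. For this I would apply the a priori estimates: on $[t_0, T_{\Delta t}]$ the hypothesis \Cref{a: ass3} holds by definition of $T_{\Delta t}$, so \Cref{c: 1gron,c: 1gron2,c: 1gron3} give $\Delta t$-independent bounds on $x^k$, $x_b^k$, $\|\phi_l^k\|_{L^2}^2$ and on the corresponding discrete-time-increment sums. Because $(T-t_0, V)$ was chosen strictly inside $S \cap \mathcal{R}$ and these bounds are continuous, monotonically increasing functions of $t_k - t_0$ and $V$ (the content of \Cref{l: tint,l: 1Vbound}), \Cref{l: constraint} forces the discretised form of \Cref{a: ass3} to remain valid throughout $[t_0, T]$. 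Hence $T_{\Delta t} \geq T$ for every $\Delta t \in (0, H)$, and in particular $\phi_l^k \in [\phi_{\min}, 1-(d-1)\phi_{\min}]$, closing the induction.

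The remaining higher-order bounds in the theorem statement are read off the equations themselves. For the $H^2$-summability of $\phi_l^k$, isolate $\delta_l \partial_z^2 \phi_l^k$ in \cref{eq: disc1} and use the embedding $H^1(0,1) \hookrightarrow L^\infty(0,1)$ together with the already-established $H^1$-bounds on $(\phi^{k-1}, w^{k-1}, v^{k-1})$ and the summability of $\mathcal{D}_{\Delta t}^k(w_m)$ and $\mathcal{D}_{\Delta t}^k(\phi_l)$; multiplying by $\Delta t$ and summing over $k$ yields the required bound. The $H^2$-bound on $w_m^k$ follows analogously from isolating $(D_m + \gamma_m/\Delta t)\partial_z^2 w_m^k$ in \cref{eq: disc3}, while the $H^1$-bound on $\mathcal{D}_{\Delta t}^k(w_m)$ is already delivered by \Cref{c: 1gron}. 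The constant $\mathcal{C}$ is then taken to be the maximum of the explicit expressions from \Cref{d: def1,d: def2} evaluated at $(T-t_0, V)$, all of which depend only on the $W^{1,\infty}$-data of the nonlinear coefficients, on $\phi_{\min}$, and on the admissible pair, never on $\Delta t$.

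The main obstacle is the apparent circular dependence between the a priori estimates (which presuppose \Cref{a: ass3}) and \Cref{a: ass3} itself (which is verified via those estimates). The resolution rests on the region $S \cap \mathcal{R}$ being open and strictly containing the initial data: the continuous monotonic bounds start strictly inside it at $t_0$ and, being piecewise polynomial in the discrete data, cannot jump discontinuously as $k$ advances, so they cannot cross the defining thresholds of $T_{\Delta t}$ before time $T$. A subsidiary difficulty is to confirm that the many constants entering \Cref{d: def1,d: def2} are genuinely $\Delta t$-independent; this amounts to tracing that they involve only $W^{1,\infty}$-norms of the coefficients, the fixed Sobolev embedding constant $C_\infty$, the chosen lower bound $\phi_{\min}$, and the geometric pair $(T-t_0, V)$, none of which depend on the discretisation.
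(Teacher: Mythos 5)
Your proposal is correct and follows the same overall architecture as the paper's own proof: select $(T-t_0,V)\in S\cap\mathcal{R}$ via \Cref{l: 1intersect}, use the Gronwall corollaries \Cref{c: 1gron,c: 1gron2,c: 1gron3} for the lower-order $\Delta t$-independent bounds under \Cref{a: ass3}, and recover the $H^2$-bounds by isolating the top-order spatial derivative in \cref{eq: disc1,eq: disc3}. The paper's own proof is extremely terse (four sentences), and your version is actually more informative: you make the induction on $k$ and the resolution of the apparent circularity between \Cref{a: ass3} and the a~priori estimates explicit, whereas the paper leaves this implicit behind the citation of \Cref{l: 1intersect}. You also explicitly invoke \Cref{c: strongdisc} to supply the classical solution at each step, which the paper omits but clearly relies upon.

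Two small points worth making more precise. First, you correctly note that the bound on $w_m^k$ comes from isolating $(D_m+\gamma_m/\Delta t)\partial_z^2 w_m^k$ in \cref{eq: disc3}, but this puts the $\Delta t$-singular term $(\gamma_m/\Delta t)\partial_z^2 w_m^{k-1}$ on the right-hand side; the $\Delta t$-independence of the resulting bound is not immediate. It follows from the contraction structure: after dividing, $\|\partial_z^2 w_m^k\|_{L^2}\le\frac{\gamma_m}{\gamma_m+D_m\Delta t}\|\partial_z^2 w_m^{k-1}\|_{L^2}+\frac{\Delta t}{\gamma_m+D_m\Delta t}C_{0,k}$, where $\sum_k C_{0,k}^2\Delta t$ is bounded by the Gronwall corollaries, and this geometric recursion with $w_m^0=0$ yields a $\Delta t$-uniform bound. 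This is essentially what the appendix asserts ("a similar approach $\ldots$ leads to a $\Delta t$-independent upper bound for $\|\partial_zw_m^k\|_{H^1}$ itself"), and you should spell it out. Second, your list of what \Cref{c: 1gron,c: 1gron2,c: 1gron3} delivers omits the bound on $\|\phi_{d-1}^k\|_{H^1}$, which is obtained separately via \cref{eq: phid,eq: phidz} and \cref{eq: 1phid} (the function $\mathcal{P}$), not directly from the three corollaries; it is worth citing that route explicitly since $\phi_{d-1}$ is the one component not covered by \Cref{c: 1gron2,c: 1gron3}.
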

\begin{proof} The existence of $T-t_0$ and $V$ for which \Cref{a: ass3} holds, has been shown in \Cref{l: 1intersect}. Furthermore, the strictly increasing derivatives of both $\mathcal{P}_\alpha$ and $\mathcal{Q}_{\Delta t}$ with respect to $\Delta t$ show that elements of the region $S\cap\mathcal{R}$ can always be chosen for $\Delta t<H$. The $\Delta t$ independent bounds is a consequence of \Cref{t: 1dim} and \Cref{a: ass3} in combination with the bounds obtained in \Cref{c: 1gron,c: 1gron2,c: 1gron3}. The $H^2$ norms follow directly from \Cref{t: 1dim} in \Cref{s: app0} applied to \Cref{eq: disc1,eq: disc3}.
\end{proof}
\section{Interpolation functions and their time continuous limit}$\;$\\
\label{s: sec6}
In this section we will construct interpolation function on $(t_0,T)\times(0,1)$ for our variables $(\phi,v,w)$, and investigate their limits for $\Delta t\downarrow0$.\\
\Cref{t: timedomain} shows that there exists a constant $\mathcal{C}$ and Sobolev spaces $X_j,Y_j,Z_j$ such that \begin{eqnarray}
\sup_{0\leq k\leq K}\left\|u^k\right\|_{X_j}^2\leq\mathcal{C}&<&\infty\cr
\sum_{k=0}^K\left\|u^k\right\|_{Y_j}^2\Delta t\leq\mathcal{C}&<&\infty\cr
\sum_{k=1}^K\left\|\mathcal{D}_{\Delta t}^k(u)\right\|_{Z_j}^2\Delta t\leq\mathcal{C}&<&\infty\nonumber
\end{eqnarray}
for all $u\in(\phi,v,w)$ with $K\Delta t = T_{\Delta t}-t_0$.\\
These bounds guarantee that interpolation functions $\hat{u}(t) := u^{k-1}+(t-t_{k-1})\mathcal{D}_{\Delta t}^k(u)$ and $\overline{u}(t) := u^{k}$ lie in the desired Bochner spaces.
\begin{lemma}
\label{l: 1regul}
Let $H$ be given by \Cref{a: ass6}, let $0<\Delta t<H$  be fixed, $\mathcal{C}>0$ and let $X$ be a Sobolev space. The following implications hold:
\begin{subequations}
\begin{align}
\sup_{0\leq k\leq K}\left\|u^k\right\|_{X}^2\leq\mathcal{C}<\infty&\Rightarrow\begin{dcases}
\hat{u}_{\Delta t}\in L^\infty(t_0,T_{\Delta t};X)\cr
\overline{u}_{\Delta t}\in L^\infty(t_0,T_{\Delta t};X)\cr
\end{dcases}\cr
\sum_{k=0}^K\left\|u^k\right\|_{X}^2\Delta t\leq\mathcal{C}<\infty&\Rightarrow\begin{dcases}\hat{u}_{\Delta t}\in L^2(t_0,T_{\Delta t};X)\cr
\overline{u}_{\Delta t}\in L^2(t_0,T_{\Delta t};X)
\end{dcases}\cr
\sum_{k=1}^K\left\|\mathcal{D}_{\Delta t}^k(u)\right\|_{X}^2\Delta t\leq\mathcal{C}<\infty&\Rightarrow\begin{dcases}
\frac{\partial\hat{u}_{\Delta t}}{\partial t}&\!\!\!\in L^2(t_0,T_{\Delta t};X)\cr
\frac{\overline{u}_{\Delta t}(t)-\overline{u}_{\Delta t}(t-\Delta t)}{\Delta t}&\!\!\!\in L^2(t_0+\Delta t,T_{\Delta t};X)
\end{dcases}\nonumber
\end{align}
\end{subequations}
\end{lemma}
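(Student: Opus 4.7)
The plan is to verify each of the three implications by reducing the Bochner-space norms of the interpolants to the given discrete sums over the time slices. Because both interpolants are piecewise explicit in $k$ on the partition $\{t_{k-1},t_k\}_{k=1}^{K}$, each integral against $t$ collapses to a Riemann-like sum multiplied by $\Delta t$, and the hypothesis supplies the required bound directly.

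For the first implication, I would observe that $\overline{u}_{\Delta t}(t)=u^k$ on $(t_{k-1},t_k]$, so $\|\overline{u}_{\Delta t}(t)\|_X\leq \sup_{0\leq k\leq K}\|u^k\|_X\leq\sqrt{\mathcal{C}}$ for a.e.\ $t$, giving $\overline{u}_{\Delta t}\in L^\infty(t_0,T_{\Delta t};X)$. For $\hat{u}_{\Delta t}$, I would rewrite it on $[t_{k-1},t_k]$ as the convex combination
\begin{equation}
\hat{u}_{\Delta t}(t)=\frac{t_k-t}{\Delta t}u^{k-1}+\frac{t-t_{k-1}}{\Delta t}u^k,\nonumber
\end{equation}
so the triangle inequality yields $\|\hat{u}_{\Delta t}(t)\|_X\leq\max\{\|u^{k-1}\|_X,\|u^k\|_X\}\leq\sqrt{\mathcal{C}}$.

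For the second implication, I would compute directly
\begin{equation}
\int_{t_0}^{T_{\Delta t}}\!\!\|\overline{u}_{\Delta t}(t)\|_X^2\,\mathrm{d}t=\sum_{k=1}^{K}\|u^k\|_X^2\,\Delta t\leq\mathcal{C},\nonumber
\end{equation}
and for $\hat{u}_{\Delta t}$ use the convexity representation together with $(a+b)^2\leq 2(a^2+b^2)$ to bound $\|\hat{u}_{\Delta t}(t)\|_X^2\leq 2(\|u^{k-1}\|_X^2+\|u^k\|_X^2)$ on each subinterval, so that the time integral is dominated by $4\sum_{k=0}^{K}\|u^k\|_X^2\,\Delta t\leq 4\mathcal{C}$. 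For the third implication, note that on $(t_{k-1},t_k)$ one has $\partial_t\hat{u}_{\Delta t}=\mathcal{D}_{\Delta t}^k(u)$ in the classical pointwise sense (and hence also as the weak time derivative), so
\begin{equation}
\int_{t_0}^{T_{\Delta t}}\!\!\left\|\partial_t\hat{u}_{\Delta t}(t)\right\|_X^2\,\mathrm{d}t=\sum_{k=1}^{K}\|\mathcal{D}_{\Delta t}^k(u)\|_X^2\,\Delta t\leq\mathcal{C};\nonumber
\end{equation}
the analogous statement for the backward difference quotient of $\overline{u}_{\Delta t}$ follows from the identity $(\overline{u}_{\Delta t}(t)-\overline{u}_{\Delta t}(t-\Delta t))/\Delta t=\mathcal{D}_{\Delta t}^k(u)$ on $(t_{k-1},t_k)$ for $k\geq 1$.

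There is no serious obstacle here: the proof is purely bookkeeping, relying only on the step-function/linear-interpolation structure and the explicit definitions of $\overline{u}_{\Delta t}$ and $\hat{u}_{\Delta t}$. The only minor care point is to choose consistent conventions on the half-open intervals $(t_{k-1},t_k]$ versus $[t_{k-1},t_k)$ so that the summations run over the correct index set and agree with the bound $K\Delta t=T_{\Delta t}-t_0$ given by Theorem~\ref{t: timedomain}; this only changes boundary terms on a set of measure zero and does not affect any of the $L^p$ norms.
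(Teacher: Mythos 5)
Your proof is correct and follows essentially the same route as the paper: reduce each Bochner norm of the interpolants to the discrete sums using the piecewise-constant / piecewise-linear structure, the convexity representation for $\hat u_{\Delta t}$ on each subinterval, and the pointwise identity $\partial_t\hat u_{\Delta t}=\mathcal D^k_{\Delta t}(u)$ on $(t_{k-1},t_k)$. The only things the paper adds that you elide are an explicit (one-line) remark on measurability of the interpolants and the observation that the backward-difference identity for $\overline u_{\Delta t}$ lives on $(t_0+\Delta t,T_{\Delta t})$, i.e.\ requires $k\geq 2$ rather than $k\geq 1$ as you wrote — both trivial, and your constants ($4\mathcal C$ versus the paper's $2\mathcal C$) are looser but equally sufficient.
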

\begin{proof}
The measurability of $\overline{u}_{\Delta t}$ and $\hat{u}_{\Delta t}$ is easily established since the piecewise constant functions are measurable and dense in the set of piecewise linear functions. Furthermore, the construction of $\overline{u}_{\Delta t}$ and $\hat{u}_{\Delta t}$ yields
\begin{subequations}
\begin{align}
\int_{t_0}^{T_{\Delta t}}\|\hat{u}_{\Delta t}\|_X^2(t)\mathrm{d}t = \sum_{k=1}^K\int_{t_{k-1}}^{t_k}\|\hat{u}_{\Delta t}\|_X^2(t)\mathrm{d}t&\leq 2\sum_{k=0}^K\|u^k\|_{X}^2\Delta t,\cr
\int_{t_0}^{T_{\Delta t}}\|\overline{u}_{\Delta t}\|_X^2(t)\mathrm{d}t = \sum_{k=1}^K\int_{t_{k-1}}^{t_k}\|\overline{u}_{\Delta t}\|_X^2(t)\mathrm{d}t&\leq \sum_{k=1}^K\|u^k\|_{X}^2\Delta t,\cr
\text{and}\quad\esssup_{t\in(t_0,T_{\Delta t})}\|\hat{u}_{\Delta t}\|_X(t)=\esssup_{t\in(t_0,T_{\Delta t})}\|\overline{u}_{\Delta t}\|_X(t) &= \sup_{t_k\in[t_0,T_{\Delta t}]}\|u^k\|_{X}.\nonumber
\end{align}
\end{subequations}
From the definition of derivative, it is easily seen that $\hat{u}_{\Delta t}$ has a strong derivative for a.e. $t\in(0,T_{\Delta t}]$, since we have
\begin{equation}
\lim_{h\downarrow0}\left\|\frac{\hat{u}_{\Delta t}(t+h)-\hat{u}_{\Delta t}(t)}{h}-\frac{u^k-u^{k-1}}{\Delta t}\right\|=0\nonumber
\end{equation}
for $t\in(t_{k-1},t_k)$ with $t_0<t_k\leq T_{\Delta t}$. Hence we obtain
\begin{subequations}
\begin{align}
\int_{t_0}^{T_{\Delta t}}\left\|\partial_t\hat{u}_{\Delta t}\right\|_X^2(t)\mathrm{d}t &= \sum_{k=1}^K\int_{t_{k-1}}^{t_k}\left\|\partial_t\hat{u}_{\Delta t}\right\|_X^2(t)\mathrm{d}t\cr
&\leq \sum_{k=1}^K\left\|\mathcal{D}_{\Delta t}^k(u)\right\|_{X}^2\Delta t.\cr
\int_{t_0+\Delta t}^{T_{\Delta t}}\left\|\frac{\overline{u}_{\Delta t}(t)-\overline{u}_{\Delta t}(t-\Delta t)}{\Delta t}\right\|_X^2\mathrm{d}t &= \sum_{k=1}^K\int_{t_{k-1}}^{t_k}\left\|\frac{\overline{u}_{\Delta t}(t)-\overline{u}_{\Delta t}(t-\Delta t)}{\Delta t}\right\|_X^2\mathrm{d}t\cr
&\leq \sum_{k=1}^K\left\|\mathcal{D}_{\Delta t}^k(u)\right\|_{X}^2\Delta t.\nonumber
\end{align}
\end{subequations}
\end{proof}
With the bounded norms, independent of $\Delta t$, and the $\Delta t$-independent time-interval $[t_0,T]$ with $T\leq T_{\Delta t}$ the weak convergence in $\Delta t$ of functions $\hat{u}_{\Delta t}$ and $\overline{u}_{\Delta t}$ defined on $(t_0,T)\times(0,1)$ is guaranteed by the Eberlein-Smulian theorem, as stated in \cite{CaVyMo2007}.\\
$\;$\\
Weak convergence of products of functions defined on $(t_0,T)\times(0,1)$ is guaranteed by strong convergence of all but one function in the product. The strong convergence is given by the Lions-Aubin-Simon lemma, originally stated in \cite{Simon1986}. We use a version of \cite{DrJu2012} with slight modifications as stated in \cite{ChJuLi2013}.

\begin{lemma}[Lions-Aubin-Simon]
\label{l: LAS}
Let $X$, $B$, and $Y$ be Banach spaces such that the embedding $X \hookrightarrow B$ is
compact and the embedding $B \hookrightarrow Y$ is continuous. Furthermore, let either $1 \leq p < \infty$,
$r = 1$ or $p = \infty$, $r > 1$, and let $(u_\tau)$ be a sequence of functions, that are constant on each
subinterval $(t_{k-1}, t_{k})$, satisfying
\begin{subequations} \begin{align}
\left\|u_\tau\right\|_{L^p(0,T;X)}&\leq\,C_0,\label{eq: LASbound1}\\
\left\|u_{\tau}(t)-u_{\tau}(t-\tau)\right\|_{L^r(t_0+\tau,T;Y)}&\leq\,C_0\tau^\alpha\label{eq: LASbound2}
 \end{align}\end{subequations}
for $\alpha = 1$ and for all $\tau>0$, where $C_0 > 0$ is a constant that is independent of $\tau$. If $p < \infty$, then $(u_\tau )$ is relatively compact in $L^p(0,T;B)$. If $p = \infty$, then there exists a subsequence of $(u_\tau )$ that converges in
each space $L^q(0,T;B)$, $1 \leq q < \infty$, to a limit that belongs to $C^0([0, T];B)$.\\
Moreover we cannot replace $\alpha = 1$ with $\alpha \in (0,1)$.
\end{lemma}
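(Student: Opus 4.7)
The plan is to verify the Kolmogorov--Riesz--Fr\'echet compactness criterion for $L^p(t_0,T;B)$: combined with finiteness of $T$, this reduces the relative compactness claim to two ingredients, namely uniform boundedness of $(u_\tau)$ in $L^p(t_0,T;B)$ and a uniform-in-$\tau$ modulus of continuity for the translation $h\mapsto \|u_\tau(\cdot+h)-u_\tau(\cdot)\|_{L^p(t_0,T-h;B)}$ as $h\downarrow 0$. The first is immediate from the continuous embedding $X\hookrightarrow B$ together with~\cref{eq: LASbound1}, so the technical content lies entirely in the second.

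The central tool is Ehrling's inequality, itself a consequence of $X\hookrightarrow\hookrightarrow B \hookrightarrow Y$: for every $\eta>0$ there exists $C_\eta>0$ with $\|v\|_B \leq \eta\|v\|_X + C_\eta\|v\|_Y$ for all $v\in X$. Applied pointwise to $v=u_\tau(t+h)-u_\tau(t)$ and integrated, together with $\|u_\tau(\cdot+h)-u_\tau(\cdot)\|_{L^p(X)} \leq 2\|u_\tau\|_{L^p(X)}\leq 2C_0$, one obtains
\begin{equation*}
\|u_\tau(\cdot+h)-u_\tau(\cdot)\|_{L^p(t_0,T-h;B)} \leq 2\eta C_0 + C_\eta \|u_\tau(\cdot+h)-u_\tau(\cdot)\|_{L^p(t_0,T-h;Y)}.
\end{equation*}
The two admissible pairs $(p,r)$ combined with the finiteness of $T$ allow the $L^p$-norm of the $Y$-shift on the right to be controlled by the $L^r$-norm supplied in~\cref{eq: LASbound2}.

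For the $Y$-shift I would exploit the piecewise-constant-in-time structure: writing $h=m\tau+s$ with $0\leq s<\tau$, the increment $u_\tau(t+h)-u_\tau(t)$ is a telescoping sum of at most $m+1$ discrete differences $u^i-u^{i-1}$, each of which is controlled via~\cref{eq: LASbound2} at $\alpha=1$. The triangle inequality (Jensen for $r>1$), a reindexing of the double sum, and~\cref{eq: LASbound2} then yield a bound of the form $C_0(h+\tau)$ up to a harmless combinatorial constant, which for fixed $h$ is uniform in $\tau$ and tends to $C_0h$ as $\tau\downarrow 0$. Choosing first $\eta$ small and then $h$ small in the Ehrling splitting supplies the desired modulus of continuity; Kolmogorov--Riesz--Fr\'echet then delivers relative compactness in $L^p(t_0,T;B)$ for $p<\infty$. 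For $p=\infty$, the same estimates give uniform equicontinuity of $t\mapsto u_\tau(t)$ into $B$, and Arzel\`a--Ascoli extracts a subsequence converging in $C^0([t_0,T];B)$ and hence in every $L^q(t_0,T;B)$ with $q<\infty$.

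The main obstacle is the coupling between the translation parameter $h$ and the discretization $\tau$: the telescoping estimate must yield a bound that is uniform in $\tau$, rather than merely tending to zero for each fixed $\tau$, and the residual piece $s<\tau$ must be absorbed without spoiling uniformity; this is precisely why $\alpha=1$ cannot be weakened in~\cref{eq: LASbound2}. The sharpness statement (impossibility of replacing $\alpha=1$ with $\alpha\in(0,1)$) is established by an oscillation-based counter-example: one constructs $u_\tau$ piecewise-constant-in-time, cycling through pairwise orthogonal modes of $B$ whose rate of oscillation is tuned so that~\cref{eq: LASbound2} holds with any prescribed $\alpha\in(0,1)$ while the modes remain separated in $B$, preventing strong $L^p(t_0,T;B)$ convergence; this follows the constructions in~\cite{Simon1986,DrJu2012}.
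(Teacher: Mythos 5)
The paper does not prove this lemma: it is quoted verbatim (with attribution) from Dreher--J\"ungel~\cite{DrJu2012} and Chen--J\"ungel--Liu~\cite{ChJuLi2013}, so there is no in-house proof to compare against. Your sketch does follow the general architecture of the cited proof --- Ehrling interpolation, a telescoping estimate on the piecewise-constant shifts, and a Simon-type compactness criterion --- so the strategy is not genuinely different; the question is whether the sketch actually closes.

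It does not, and the gap sits precisely where you flag ``the main obstacle'' without resolving it. Your telescoping bound is $\|u_\tau(\cdot+h)-u_\tau(\cdot)\|_{L^1(t_0,T-h;Y)}\lesssim C_0(h+\tau)$. This estimate is \emph{not} enough: the Simon criterion needs $\sup_\tau\|u_\tau(\cdot+h)-u_\tau(\cdot)\|_{L^1(Y)}\to 0$ as $h\downarrow 0$, and $\sup_\tau C_0(h+\tau)$ does not vanish as $h\downarrow 0$ since $\tau$ is a second small parameter ranging freely. Writing ``choosing first $\eta$ small and then $h$ small'' does not repair this, because for each fixed small $h$ the residual $C_0\tau$ term is still present. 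What saves the argument is that the loose bound $C_0(h+\tau)$ can be sharpened to $C_0 h$: if $h=m\tau+s$ with $0\le s<\tau$, then on each cell $(t_{k-1},t_k)$ the shifted increment telescopes over $m$ jumps on a sub-interval of length $\tau-s$ and over $m+1$ jumps on a sub-interval of length $s$; reindexing the resulting double sum shows every single jump $\|u^{i}-u^{i-1}\|_Y$ appears with total measure-weight exactly $m(\tau-s)+(m+1)s=m\tau+s=h$, so the $\tau$-dependence cancels identically and the bound is $C_0 h$ with \emph{no} $\tau$ term. That cancellation is the whole point of the $\alpha=1$ hypothesis and is not established by your sketch, which instead reports the weaker $C_0(h+\tau)$. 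A secondary imprecision: for $B$-valued $L^p$ the Kolmogorov--Riesz--Fr\'echet criterion in its scalar form is inapplicable; one needs Simon's vector-valued version, which in addition to the uniform translation modulus requires relative compactness in $B$ of the time-averages $\int_{t_1}^{t_2}u_\tau\,dt$ --- this is supplied by the $L^p(X)$ bound and $X\hookrightarrow\hookrightarrow B$, not by boundedness in $L^p(B)$ as your first paragraph suggests.
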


Compactness results related to $H^r(\Omega)$ with $r\geq0$ can be found in \cite{LiMa1}. For one-dimensional bounded $\Omega$ compactness results can be found in section 8.2 of \cite{Brezis2010}.

\begin{theorem}
\label{t: comp}
Let $s\in\mathbf{R}$. If $\Omega\subset\mathbf{R}^n$ is bounded and has a $(n-1)$-dimensional infinitely differentiable boundary $\Gamma$ with $\Omega$ being locally on one side of $\Gamma$, then the injection $H^s(\Omega)\hookrightarrow H^{s-\epsilon}(\Omega)$ is compact for every $\epsilon>0$.\\
Let $\Omega\subset\mathbf{R}$ be bounded, then for all $m>0$ integer and $p\in(0,\infty]$ we have
\begin{equation}
\label{eq: compC}
W^{m+1,p}(\Omega)\hookrightarrow\hookrightarrow C^{m}(\overline{\Omega})\hookrightarrow W^{m,p}(\Omega).
\end{equation}
\end{theorem}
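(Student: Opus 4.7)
My plan is to treat the two assertions separately and lean on the classical references the authors have already cited (\cite{LiMa1} for the first, \cite{Brezis2010} for the second), so that the proof amounts to quoting the correct theorems after some reductions.

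For the compact embedding $H^s(\Omega)\hookrightarrow H^{s-\epsilon}(\Omega)$, I would first observe that the smoothness of $\Gamma$ and the one-sided condition give a continuous extension operator $E\colon H^s(\Omega)\to H^s(\mathbf{R}^n)$ and a continuous restriction $R\colon H^{s-\epsilon}(\mathbf{R}^n)\to H^{s-\epsilon}(\Omega)$, so it suffices to show that multiplication by a cutoff $\chi\in C_c^\infty(\mathbf{R}^n)$ (chosen equal to $1$ on a neighbourhood of $\overline{\Omega}$) followed by the identity is compact from $H^s(\mathbf{R}^n)$ into $H^{s-\epsilon}(\mathbf{R}^n)$. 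For a bounded sequence $(u_j)$ in $H^s(\mathbf{R}^n)$ with $\mathrm{supp}(u_j)$ in a fixed compact set, one splits Fourier-side into low frequencies $|\xi|\leq R$ and high frequencies $|\xi|>R$; the high-frequency part contributes at most $(1+R^2)^{-\epsilon/2}\|u_j\|_{H^s}$ to the $H^{s-\epsilon}$-norm, while the low-frequency part is equicontinuous and uniformly bounded as a family of continuous functions of $\xi$, hence precompact by Arzel\`a--Ascoli. A diagonal extraction as $R\to\infty$ yields a Cauchy subsequence in $H^{s-\epsilon}(\mathbf{R}^n)$. This is precisely the argument given in \cite{LiMa1}, to which I would refer for the details.

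For the one-dimensional chain $W^{m+1,p}(\Omega)\hookrightarrow\hookrightarrow C^m(\overline{\Omega})\hookrightarrow W^{m,p}(\Omega)$, the right-hand continuous embedding is immediate: since $\Omega$ is a bounded interval, any $f\in C^m(\overline{\Omega})$ satisfies
\begin{equation}
\|f\|_{W^{m,p}(\Omega)}\leq |\Omega|^{1/p}\sum_{j=0}^{m}\|f^{(j)}\|_{C^0(\overline{\Omega})}.\nonumber
\end{equation}
For the compact embedding on the left, I would use the one-dimensional fact that every function in $W^{1,p}((a,b))$ has an absolutely continuous representative and hence lies in $C^0([a,b])$ with $\|f\|_{C^0}\leq C\|f\|_{W^{1,p}}$ for every $p\in(0,\infty]$. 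Applied to each derivative up to order $m$, a bounded sequence $(u_j)$ in $W^{m+1,p}(\Omega)$ gives, for every $0\leq k\leq m$, a uniformly bounded family $(u_j^{(k)})$ that is equicontinuous via H\"older's inequality applied to $u_j^{(k)}(x)-u_j^{(k)}(y)=\int_y^x u_j^{(k+1)}(t)\,\mathrm{d}t$ (or the essential supremum bound when $p=\infty$). Arzel\`a--Ascoli then produces a subsequence converging in $C^m(\overline{\Omega})$.

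The only real obstacle is notational: the first assertion uses $H^s$ for arbitrary real $s$, which requires the interpolation/extension machinery of \cite{LiMa1} to even define $H^s(\Omega)$ for non-integer and negative $s$. Once that framework is in place, and once the one-dimensional embedding $W^{1,p}\hookrightarrow C^0$ is invoked from \cite[\S 8.2]{Brezis2010}, both compactness statements reduce to the classical Rellich--Kondrachov and Arzel\`a--Ascoli arguments sketched above, and the cleanest exposition is simply to cite the corresponding theorems in those two monographs.
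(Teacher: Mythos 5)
Your approach matches the paper's: the authors give no proof beyond pointing to \cite{LiMa1} and \cite[\S 8.2]{Brezis2010}, which is exactly where you land after supplying the standard Rellich Fourier-splitting argument for the fractional $H^s$ scale and the fundamental-theorem-of-calculus plus Arzel\`a--Ascoli argument for the one-dimensional chain.

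There is, however, one genuine gap worth naming. Your equicontinuity step
\begin{equation}
\bigl|u_j^{(k)}(x)-u_j^{(k)}(y)\bigr| = \left|\int_y^x u_j^{(k+1)}(t)\,\mathrm{d}t\right| \leq |x-y|^{1-1/p}\,\bigl\|u_j^{(k+1)}\bigr\|_{L^p}
\end{equation}
produces a modulus of continuity only when $p>1$ (with $p=\infty$ handled separately, as you note); for $p=1$ H\"older gives nothing, and in fact the compact embedding asserted in the theorem \emph{fails} there. Brezis's own treatment in \cite[\S 8.2]{Brezis2010} records compactness of $W^{1,p}(I)\hookrightarrow C(\overline{I})$ only for $1<p\leq\infty$, and the piecewise-linear tent sequence $u_n(x)=\max(0,1-nx)$ — bounded in $W^{1,1}(0,1)$, converging pointwise but not uniformly — is the counterexample; integrating $m$ times yields a bounded sequence in $W^{m+1,1}$ with no subsequence converging in $C^m$, so the failure persists for every $m\geq 1$. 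Thus the range $p\in(0,\infty]$ in the theorem statement is too generous and should read $p\in(1,\infty]$; your sketch silently assumes $p>1$ without saying so. Since the paper only ever invokes this theorem on the Hilbert scale ($p=2$) and for $C^0$, $C^1$, nothing downstream is affected, but the restriction should be made explicit in both your write-up and the statement.
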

We conclude that there exists a subsequence $(\Delta t)\downarrow0$ for which we have both weak and strong convergence (but in different functions spaces) of both $\hat{u}_{\Delta t}$ and $\overline{u}_{\Delta t}$. A priori these limits $\hat{u}$ and $\overline{u}$ are not necessarily the same, however with the strong convergence we show that the limits are identical.
\begin{lemma}\label{l: 1limits}
Let $\overline{u}_{\Delta t}\rightarrow\overline{u}$ strongly in $L^2(0,T;X)$, $\hat{u}_{\Delta t}\rightharpoonup\hat{u}$ weakly in $L^2(0,T;X)$ and let $\sum_{l=1}^K\|\mathcal{D}_{\Delta t}^l(u)\|_X^2\Delta t\leq \mathcal{C}<\infty$ with $\mathcal{C}$ independent of $\Delta t$, then $\hat{u}=\overline{u}$.
\end{lemma}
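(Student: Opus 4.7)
The plan is to show that the difference $\hat{u}_{\Delta t}-\overline{u}_{\Delta t}$ tends to zero strongly in $L^2(t_0,T;X)$ as $\Delta t\downarrow0$, and then invoke uniqueness of weak limits.

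First I would compute the pointwise difference explicitly. For $t\in(t_{k-1},t_k)$, by definition
\[
\hat{u}_{\Delta t}(t)-\overline{u}_{\Delta t}(t)=u^{k-1}+(t-t_{k-1})\mathcal{D}_{\Delta t}^k(u)-u^k=(t-t_k)\mathcal{D}_{\Delta t}^k(u),
\]
so on each subinterval the difference is a linear function in $t$ whose $X$-norm is bounded by $\Delta t\,\|\mathcal{D}_{\Delta t}^k(u)\|_X$.

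Next I would estimate the $L^2(t_0,T;X)$-norm of this difference by a direct integration and the given a priori bound. Using $\int_{t_{k-1}}^{t_k}(t-t_k)^2\,\mathrm{d}t=(\Delta t)^3/3$, one obtains
\[
\|\hat{u}_{\Delta t}-\overline{u}_{\Delta t}\|_{L^2(t_0,T;X)}^2
=\sum_{k=1}^{K}\int_{t_{k-1}}^{t_k}(t-t_k)^2\|\mathcal{D}_{\Delta t}^k(u)\|_X^2\,\mathrm{d}t
=\frac{(\Delta t)^2}{3}\sum_{k=1}^{K}\|\mathcal{D}_{\Delta t}^k(u)\|_X^2\,\Delta t
\leq\frac{(\Delta t)^2}{3}\,\mathcal{C},
\]
which converges to $0$ as $\Delta t\downarrow0$.

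Finally I would conclude as follows. Since $\overline{u}_{\Delta t}\to\overline{u}$ strongly in $L^2(t_0,T;X)$ and $\hat{u}_{\Delta t}-\overline{u}_{\Delta t}\to 0$ strongly in the same space, the sum $\hat{u}_{\Delta t}=\overline{u}_{\Delta t}+(\hat{u}_{\Delta t}-\overline{u}_{\Delta t})$ converges strongly to $\overline{u}$, hence weakly to $\overline{u}$. On the other hand $\hat{u}_{\Delta t}\rightharpoonup\hat{u}$ weakly in $L^2(t_0,T;X)$ by hypothesis. By uniqueness of weak limits, $\hat{u}=\overline{u}$. No step is a serious obstacle here; the only substantive point is the cubic-in-$\Delta t$ gain from integrating $(t-t_k)^2$, which is exactly what couples the uniform bound on discrete time differences to the vanishing of $\hat{u}_{\Delta t}-\overline{u}_{\Delta t}$.
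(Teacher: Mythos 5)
Your proof is correct and follows essentially the same route as the paper: compute the pointwise difference $\hat{u}_{\Delta t}-\overline{u}_{\Delta t}$ in terms of $\mathcal{D}_{\Delta t}^k(u)$, use the uniform bound on the discrete time derivatives to show this difference vanishes in $L^2(t_0,T;X)$, and then identify the two limits. The only inessential differences are that your integration is sharper (yielding $O((\Delta t)^2)$ rather than the paper's cruder $O(\Delta t)$) and that you invoke uniqueness of weak limits abstractly, whereas the paper spells it out by testing against an arbitrary $\psi\in L^2(t_0,T;X)$.
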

\begin{proof}
Based on strong convergence, we have $\int_{t_0}^T\|\overline{u}_{\Delta t}(t)-\overline{u}(t)\|_X^2\mathrm{d}t\rightarrow0$ as $\Delta t\downarrow0$. From the construction of both $\hat{u}_{\Delta t}$ and $\overline{u}_{\Delta t}$ we have $\overline{u}_{\Delta t}(t)-\hat{u}_{\Delta t}(t) = \frac{t_k-t}{\Delta t}(u^k-u^{k-1})$, from which we obtain
\begin{equation}
\int_{t_0}^{T_{\Delta t}}\|\overline{u}_{\Delta t}(t)-\hat{u}_{\Delta t}(t)\|_X^2\mathrm{d}t\leq\sum_{k=1}^K\|\mathcal{D}_{\Delta t}^k(u)\|_X^2(\Delta t)^2\leq \mathcal{C}\Delta t\downarrow0\nonumber
\end{equation}
Thus by the triangle inequality we obtain $\int_{t_0}^T\|\hat{u}_{\Delta t}(t)-\overline{u}(t)\|_X^2\mathrm{d}t\rightarrow0$. Hence, $\hat{u}_{\Delta t}\rightarrow\overline{u}$ strongly in $L^2(t_0,T;X)$ as $\Delta t\downarrow0$. Now take an arbitrary $\psi\in L^2(t_0,T;X)$, then we have
\begin{equation}
\left|\int_0^T\int_0^1(\hat{u}-\overline{u})\psi\mathrm{d}x\mathrm{d}t\right|\leq \left|\int_0^T\int_0^1(\hat{u}_{\Delta t}-\hat{u})\psi\mathrm{d}x\mathrm{d}t\right|
+\left|\int_0^T\int_0^1(\hat{u}_{\Delta t}-\overline{u})\psi\mathrm{d}x\mathrm{d}t\right|\rightarrow0\nonumber
\end{equation}
by the weak convergence of $\hat{u}_{\Delta t}$ to $\hat{u}$ and the strong convergence of $\hat{u}_{\Delta t}$ to $\overline{u}$. We have chosen $\psi$ arbitrarily in $L^2(t_0,T;X)$. Hence, $\hat{u}=\overline{u}$ in $L^2(t_0,T;X)$.
\end{proof}
At this point, we have shown the existence of a weak and strong limit to the discrete functions $u^k_l$ on the interior of $[t_0,T]\times[0,1]$. Furthermore, we have initial conditions given by \Cref{eq: init}, while the final conditions at time $t=T$ exist by the construction of the interior functions and the determination of the interval $[t_0,T]$. We have not yet shown that the limit function on $(t_0,T)\times(0,1)$ has boundary values on the lateral boundary $(t_0,T)\times\{0,1\}$ that satisfy the boundary conditions of the continuous system. First we show that we can apply the trace theorem to identify unique boundary values for functions defined on the interior, and that these trace functions on the boundary satisfy compatibility relations. For convenience we follow the notation of Bochner spaces as stated in Lions and Magenes \cite{LiMa2} by introducing $Y(I,X)$ for normed spaces $X,Y(I)$ of interval $I$ as the space of functions $u(t)\in X$ satisfying $\left\|\|u\|_X\right\|_{Y(I)}<\infty$, and by introducing the spaces
 \begin{equation}
 H^{r,s}(I\times\Omega):=L^2(I,H^r(\Omega))\cap H^s(I,L^2(\Omega))
 \end{equation}
for $r,s>0$ where we will use the notation $Q := I\times\Omega$ with $\Omega\subset\mathbf{R}^n$ for the joint domain and $\Sigma = I\times\Gamma$ with $\Gamma:=\partial\Omega$ for the lateral boundary. Bochner space theory, as found in \cite{LiMa2}, shows the existence of trace functions and global compatibility relations, which we summarized in \Cref{t: trace}.

\begin{theorem}\label{t: trace}
Let $u\in H^{r,s}(Q)$ with $r,s\geq0$.\\
If $0\leq j <r-\frac{1}{2}$ integer, then $\partial^j_\nu u\in H^{\mu_j,\nu_j}(\Sigma)$, where $\partial_\nu^j$ is the $j$th order normal derivative on $\Sigma$, oriented toward the interior of $Q$; $\frac{\mu_j}{r}=\frac{\nu_j}{s} = \frac{r-j-\frac{1}{2}}{r}$ and where $u\rightarrow\partial_\nu^ju$ are continuous linear mappings of $H^{r,s}(Q)\rightarrow H^{\mu_j,\nu_j}(\Sigma)$.

Introduce functions $f_i(x)=\partial_t^iu(x,0)$ and $g_j(\tilde{x},t)=\partial_x^ju((x',0),t)$ for\\ $(x',0),x\in\mathbf{R}^n$ for some local indexation of the coordinates on $\Gamma$.\\
Introduce the product space $F$ of elements
 \begin{equation}
 \{f_i,g_j\}\in F = \prod_{i<s-\frac{1}{2}}H^{p_i}(\Omega)\times \prod_{j<r-\frac{1}{2}}H^{\mu_j,\nu_j}(\Sigma)
  \end{equation}
with $\frac{p_i}{r} = \frac{s-i-\frac{1}{2}}{s}$ and $\frac{\mu_j}{r} = \frac{\nu_j}{s} = \frac{r-j-\frac{1}{2}}{r}$.\\
Let $F_0$ be the vector subspace of $F$ which satisfies $\partial_t^ig_j(x',0) = \partial_{x_n}^jf_i(x',0)$ and\\ $\int_0^T\int_{\Gamma}\left|\partial_t^ig_j(x',\sigma^r) - \partial_{x_n}^jf_i(x',\sigma^s)\right|^2\text{d}x'\frac{\text{d}\sigma}{\sigma}<\infty$.\\
If $r,s>0$ and $0\leq\frac{j}{r}+\frac{i}{s}<1-\frac{1}{2}\left(\frac{1}{r}+\frac{1}{s}\right)$ integers, then the map $u\mapsto\{f_i,g_j\}$ is a continuous linear surjection of $H^{r,s}(Q)\rightarrow F_0$.
\end{theorem}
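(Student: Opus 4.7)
The statement is the classical anisotropic Sobolev trace theorem of Lions–Magenes; my plan is to outline the ingredients one would assemble to (re)prove it rather than to redo the lengthy interpolation arguments. First I would reduce to the model geometry $Q_0 = \mathbf{R}_+ \times \mathbf{R}^{n-1}\times\mathbf{R}_+$ with lateral boundary $\Sigma_0 = \mathbf{R}_+\times\mathbf{R}^{n-1}\times\{0\}$ and bottom face $\mathbf{R}^{n-1}_+\times\{t=0\}$, using a partition of unity subordinate to a finite atlas of $\Gamma$ together with local flattening of the boundary. Since the anisotropic $H^{r,s}(Q)$ norm is invariant (up to equivalence) under smooth boundary straightening, and multiplication by cut-off functions is bounded on these spaces, it suffices to establish both continuity and surjectivity on the model half-slab.

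Second, I would treat the pure trace statement $\partial_\nu^j u \in H^{\mu_j,\nu_j}(\Sigma)$ via partial Fourier transform in $(t,x')$. The equivalent norm
\begin{equation}
\|u\|_{H^{r,s}}^{2} \;\asymp\; \int \bigl(1+|\tau|^{2s}+|\xi|^{2r}\bigr)\,|\widehat{u}(\tau,\xi,x_n)|^{2}\,\mathrm{d}\tau\,\mathrm{d}\xi\,\mathrm{d}x_n
\end{equation}
combined with the standard one-dimensional trace lemma in the normal variable $x_n$ yields the required anisotropic regularity for $\partial_{x_n}^{j}u|_{x_n=0}$. The scaling $\mu_j/r=\nu_j/s=(r-j-1/2)/r$ is then forced: it is the unique homogeneity under the rescaling $(t,x)\mapsto(\lambda^{s/r}t,\lambda x)$ compatible with the norm above.

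Third, I would justify the compatibility relations defining $F_{0}$. At the corner $\Gamma\times\{0\}$, both $g_{j}(x',0)$ and $\partial_{x_n}^{j}f_{i}(x',0)$ are restrictions of the same function $u$, so whenever $j/r+i/s$ is strictly subcritical they must agree pointwise. The Dini-type integral condition with measure $\mathrm{d}\sigma/\sigma$ encodes the limiting Besov matching at the critical exponent $j/r+i/s=1-(1/r+1/s)/2$, and it is the natural anisotropic analogue of the classical Grisvard edge-compatibility condition.

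The hard part, as expected, is surjectivity: given $\{f_{i},g_{j}\}\in F_{0}$, construct a lifting $u\in H^{r,s}(Q)$. My plan is to build $u$ in two stages. First, produce an extension $u_{1}$ realising the prescribed temporal data $f_{i}$ with vanishing lateral data, using an anisotropic heat-kernel-type Fourier multiplier of the form $\exp\bigl(-t(1+|\xi|^{2})^{s/r}\bigr)$ applied to suitable lifts of the $f_{i}$. Then correct with a second extension $u_{2}$ carrying the lateral data $g_{j}$ and supported away from $\{t=0\}$, built by an analogous parabolic Poisson operator in the normal variable $x_{n}$. The compatibility conditions on $F_{0}$ are precisely what permits $u_{1}+u_{2}$ to combine without creating a regularity loss along the edge $\Gamma\times\{0\}$, and continuity of the lifting together with the trace bound from the second step closes the argument. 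For the present paper, this assembly step is most efficiently handled by invoking the relevant theorems of \cite{LiMa2} directly.
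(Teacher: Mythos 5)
The paper does not actually prove this theorem: the statement carries no \verb|proof| environment and is introduced as a summary of results from Lions and Magenes, cited directly to \cite{LiMa2}. Your proposal is therefore not competing with a proof in the paper; it is an expository reconstruction of the argument behind the citation, and you sensibly close by noting that the most efficient course for the paper is to invoke \cite{LiMa2} directly, which is exactly what the authors do. As a sketch, your route (flatten to a model half-slab, take partial Fourier transform in $(t,x')$, use the one-dimensional trace lemma in the normal variable, identify the corner compatibility conditions, then build a two-stage lifting to establish surjectivity) correctly reflects the structure of the Lions--Magenes proof, and the heat-kernel/Poisson-operator picture for the lifting is a legitimate realisation of their abstract interpolation-theoretic construction.

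One small technical slip: the anisotropic homogeneity you invoke is inverted. With the equivalent norm $\int(1+|\tau|^{2s}+|\xi|^{2r})|\widehat u|^2$, the rescaling that keeps $|\tau|^{2s}$ and $|\xi|^{2r}$ comparable is $(t,x)\mapsto(\lambda^{r/s}t,\lambda x)$, not $(\lambda^{s/r}t,\lambda x)$ as you wrote; equivalently, the correct dual scaling is $\tau\sim|\xi|^{r/s}$. The exponent formula $\mu_j/r=\nu_j/s=(r-j-1/2)/r$ you arrive at is nevertheless correct, so this reads as a transcription error rather than a conceptual one, but it is worth fixing if the sketch were to be kept. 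Also, the model domain should be a bounded time interval rather than $\mathbf{R}_+$ (the paper has $I=[t_0,T]$); this is routine to handle by extension, but since the compatibility conditions of $F_0$ live precisely on the corner $\Gamma\times\{t=0\}$, one should be explicit that the extension preserves the trace at $t=0$.
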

Together with the Lions-Aubin-Simon lemma, which states that the strong limit is an element of $L^2(t_0,T;C^j(0,1))$ for some integer $j$, we can show that the trace functions satisfy the boundary conditions, essentially because the trace functions are a limit of the trace functions of the interpolation functions for which the boundary conditions do apply.
\begin{lemma}
\label{l: bound}
Let $a\in\{0,1\}$. Let $u_{\Delta t}$ be either $\hat{u}_{\Delta t}$ or $\overline{u}_{\Delta t}$. Let $u_{\Delta t}|_{z=a}$ be the single-sided trace of $u_{\Delta t}$. For $r,s,j,i,s-i\geq0$ and $r-j-1/2>0$ we have
\begin{subequations}
\begin{align}
u_{\Delta t}\in H^{r,s}(Q)&\Rightarrow \left.\partial_\nu^j\partial_t^i u_{\Delta t}\right|_{z=a}\in H^{r-j-\frac{1}{2},\frac{r-j-1/2}{r}(s-i)}(\Sigma) = H^{\frac{r-j-1/2}{r}(s-i)}(t_0,T)\nonumber
\end{align}
\end{subequations}
with weak limit $u|_{z=a}$ in $H^{\frac{r-j-1/2}{r}(s-i)}(t_0,T)$. Additionally, if $r\geq j+1/2$, $s\geq i+1/2$ then there exists a subsequence $(\Delta t)$ such that $\left.\partial_\nu^j\partial_t^i u_{\Delta t}\right|_{z=a}$ converges weakly in $L^2(t_0,T)$ to $\left.\partial_\nu^j\partial^i_t u\right|_{z=a}=\left.\partial_t^i\partial_\nu^ju\right|_{z=a}\in L^2(t_0,T)$. Moreover, if $r\geq j+1$, $s\geq i+1$, then there exists a subsequence $(\Delta t)$ such that $\left.\partial_\nu^j\partial_t^i u_{\Delta t}\right|_{z=a}$ converges strongly in $L^2(t_0,T)$ to $\left.\partial_\nu^j\partial^i_t u\right|_{z=a}=\left.\partial_t^i\partial_\nu^ju\right|_{z=a}\in C^0(t_0,T)$.\\
If on the other hand $\left.\partial_\nu^j\partial_t^i u_{\Delta t}\right|_{z=a}=C\in\mathbf{R}$, then it converge strongly to\textcolor{white}{blablabla} $\left.\partial_\nu^j\partial_t^i u\right|_{z=a}=C$ regardless of the function space on $[t_0,T]$ or the sequence $(\Delta t)$.
\end{lemma}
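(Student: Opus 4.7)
The plan is to proceed in three stages: identify the trace space via \Cref{t: trace}, transfer weak convergence of $u_{\Delta t}$ through the trace map, and upgrade to strong $L^2$ convergence using a compactness argument. The constant-trace case is immediate, since a constant sequence converges to its value in every norm regardless of subsequence.

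For the first step, apply \Cref{t: trace} to $\partial_t^i u_{\Delta t}\in H^{r,s-i}(Q)$, viewing the time derivative as an element of the shifted Bochner space. This yields $\partial_\nu^j\partial_t^i u_{\Delta t}\in H^{\mu_j,\nu_j}(\Sigma)$ with $\mu_j=r-j-\tfrac{1}{2}$ and $\nu_j=\frac{r-j-1/2}{r}(s-i)$. Because the lateral boundary $\Gamma=\{0,1\}$ is zero-dimensional, the single-sided trace space $H^{\mu_j,\nu_j}(\Sigma)$ collapses to $H^{\nu_j}(t_0,T)$, and the commutation $\partial_t^i\partial_\nu^j u=\partial_\nu^j\partial_t^i u$ holds in the sense of distributions and passes to the trace by density. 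By \Cref{t: timedomain} and \Cref{l: 1regul} the family $u_{\Delta t}$ is uniformly bounded in $H^{r,s}(Q)$; Eberlein--\v{S}mulian extracts a weakly convergent subsequence whose limit equals $u$ by \Cref{l: 1limits}. Continuity and linearity of the trace operator then push this to weak convergence in $H^{\nu_j}(t_0,T)$, proving the first assertion. When $r\geq j+\tfrac{1}{2}$ and $s\geq i+\tfrac{1}{2}$ the exponent $\nu_j$ is nonnegative, so the continuous embedding $H^{\nu_j}(t_0,T)\hookrightarrow L^2(t_0,T)$ yields weak $L^2$ convergence along a further subsequence.

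Under the stronger hypotheses $r\geq j+1$, $s\geq i+1$ one has $\nu_j>0$, so the compact embedding $H^{\nu_j}(t_0,T)\hookrightarrow\hookrightarrow L^2(t_0,T)$ upgrades weak to strong $L^2$ convergence of the traces along a subsequence. The $C^0(t_0,T)$ regularity of the limit is obtained by combining the 1D compact embedding $W^{j+1,p}(\Omega)\hookrightarrow\hookrightarrow C^{j}(\overline{\Omega})$ from \Cref{t: comp} with the Lions--Aubin--Simon \Cref{l: LAS}: the $L^\infty$-in-time bound from \Cref{l: 1regul} puts us in the $p=\infty$ case, so a subsequence of $\partial_t^i u_{\Delta t}$ converges in $C^0([t_0,T];C^{j}(\overline{\Omega}))$, and pointwise evaluation at $z=a$ is a continuous linear map into $\mathbf{R}$, yielding the limit $\partial_\nu^j\partial_t^i u|_{z=a}\in C^0(t_0,T)$.

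The main obstacle is the consistency of the commutation and limit-identification steps: one must ensure that the trace of the weak limit coincides with the weak limit of the traces, which hinges on the continuity---not merely boundedness---of the trace map on the shifted space $H^{r,s-i}(Q)$, and on verifying that the regularity exponents remain compatible with the surjection statement of \Cref{t: trace} throughout the passage to the limit.
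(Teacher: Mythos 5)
Your proposal reaches the same conclusions but by a genuinely different route for the strong-convergence step. The paper applies the trace theorem to identify the boundary spaces, then deploys the Lions--Aubin--Simon lemma on the \emph{interior} functions $u_{\Delta t}$ to obtain strong convergence of $\partial_z^j\partial_t^i u_{\Delta t}$ in $L^2(t_0,T;C^0([0,1]))$ and $H^i(t_0,T;C^j([0,1]))$, and finally transfers this to the boundary via the explicit chain of estimates $\|\partial_\nu^j\partial_t^i u_{\Delta t}|_{z=a}-\partial_\nu^j\partial_t^i u|_{z=a}\|_{L^2(t_0,T)}\le\|u_{\Delta t}-u\|_{H^i(t_0,T;C^j([0,1]))}\to0$. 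You instead push weak convergence through the (continuous, linear) trace map into $H^{\nu_j}(t_0,T)$, then use compactness of the embedding $H^{\nu_j}(t_0,T)\hookrightarrow\hookrightarrow L^2(t_0,T)$ to upgrade to strong $L^2$ convergence directly on the boundary. Your route is conceptually cleaner for the $L^2$ statement because it avoids estimating interior $\sup$-norms, and it also exposes that strong $L^2$ convergence already holds whenever $\nu_j>0$, which is weaker than the hypotheses stated in the lemma; the stronger conditions $r\ge j+1$, $s\ge i+1$ are really needed for the $C^0(t_0,T)$ regularity of the limit, which you obtain from LAS.

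Two technical caveats. First, the step ``view $\partial_t^i u_{\Delta t}\in H^{r,s-i}(Q)$ as an element of the shifted Bochner space'' is not immediate from the definition $H^{r,s}(Q)=L^2(I;H^r(\Omega))\cap H^s(I;L^2(\Omega))$: differentiating in $t$ does not preserve the $L^2(I;H^r(\Omega))$ regularity without an interpolation argument, and the trace theorem as stated in the paper already encodes the time-normal-derivative compatibility through the ratios $\mu_j/r=\nu_j/s$ and $p_i/r=(s-i-1/2)/s$. Your exponent arithmetic lands on the right spaces, but the justification needs the mixed-trace statement from Lions--Magenes (the combined $\{f_i,g_j\}$ surjection in \Cref{t: trace}), not the naive substitution $s\mapsto s-i$. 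Second, the $p=\infty$ case of \Cref{l: LAS} guarantees existence of a subsequence converging in each $L^q(t_0,T;B)$ for $1\le q<\infty$ with limit in $C^0([t_0,T];B)$; it does \emph{not} assert convergence in $C^0([t_0,T];B)$. Your phrasing ``a subsequence of $\partial_t^i u_{\Delta t}$ converges in $C^0([t_0,T];C^j(\overline{\Omega}))$'' overstates this, though the intended conclusion---that the limit lies in $C^0$ and evaluation at $z=a$ is continuous---survives with $L^q$ convergence for $q=2$, which is precisely what the paper uses.
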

\begin{proof}
\Cref{t: trace} immediately gives the appropriate spaces for $u_{\Delta t}|_{z=a}$. Furthermore, \Cref{l: LAS} shows that weak convergence in $H^{j+1,i+1}(Q)$ implies the existence of a subsequence $(\Delta t)$ with weak convergence of $\partial_z^j\partial_t^i u_{\Delta t}$ in $L^q(t_0,T;C^0([0,1]))$ for each $1\leq q<\infty$ to $\partial_z^j\partial_t^i u\in C^0(\overline{Q})$ and strong convergence of $u_{\Delta t}$ to $u$ in $H^i(t_0,T;C^j([0,1]))$. The continuity $C^i([t_0,T];C^j([0,1]))$ shows that the trace operator is both an evaluation $z=a$ and a limit from $z\in(0,1)$ to $z=a\in\{0,1\}$. Additionally, the continuity shows that the derivatives commutate. The strong convergence on the boundary follows from the following inequalities:
\begin{subequations}
\begin{align}
\lim_{\Delta t\downarrow0}\left\|\left.\partial^j_\nu\partial^i_tu_{\Delta t}\right|_{z=a}\!\!\!\!\!\!-\left.\partial^j_\nu\partial^i_tu\right|_{z=a}\right\|_{L^2(t_0,T)}&\leq\lim_{\Delta t\downarrow0}\left\|\sup_{\overline{z}\in[0,1]}\!\!\!\left|\partial^j_z\partial^i_tu_{\Delta t}(\overline{z})\!-\!\partial^j_z\partial^i_tu(\overline{z})\right|\right\|_{L^2(t_0,T)}\cr
&=\lim_{\Delta t\downarrow0}\left\|\partial^j_z\partial^i_tu_{\Delta t}-\partial^j_z\partial^i_tu\right\|_{L^2(t_0,T;C^0([0,1]))}\cr
&\leq\lim_{\Delta t\downarrow0}\left\|u_{\Delta t}\!-\!u\right\|_{H^j(t_0,T;C^i([0,1]))}=0\nonumber
\end{align}
\end{subequations}
The strong convergence of $\partial_\nu^j\partial_t^iu_{\Delta t}$ in $L^2(t_0,T)$ states that the weak convergence of $\partial_\nu^j\partial_t^iu_{\Delta t}$ in $L^2(t_0,T)$, in particular the one induced by the weak convergence of $\partial_z^j\partial_t^i u_{\Delta t}$ in $L^q(t_0,T;C^0([0,1]))$ with $q=2$, has the same limit as the strong convergence. Hence, the strong limit must be in $C^0(t_0,T)$.\\
The weak convergence of $\left.\partial_\nu^j\partial_t^i u_{\Delta t}\right|_{z=a}\in L^2(t_0,T)$ to $\left.\partial_\nu^j\partial^i_t u\right|_{z=a}=\left.\partial_t^i\partial_\nu^ju\right|_{z=a}\in L^2(t_0,T)$ can be easily seen. The weak convergence itself is a direct application of \Cref{l: LAS,t: trace,t: comp}, while the commutating derivatives are a consequence of the weak derivative structure itself.\\
The strong convergence for the case $\left.\partial_\nu^j\partial_t^i u_{\Delta t}\right|_{z=a}=C\in\mathbf{R}$ is trivial.
\end{proof}
We will now show the weak convergence of the semi-discrete approximations to a weak solution of the continuous system.
\begin{theorem}
\label{t: convergence}
If the conditions of \Cref{t: timedomain} are satisfied, then there exist constants $T-t_0>0$ and $V>0$ such that there exist functions:
\begin{subequations}
\begin{align}
\phi_l&\in L^2(t_0,T;H^2([0,1]))\cap L^\infty(t_0,T;H^1(0,1))\cap C^0([t_0,T];C^0([0,1]))\cr
&\qquad\cap H^1(t_0,T;L^2(0,1))\cr
v&\in L^2(t_0,T;H^1(0,1))\cr
w_m&\in L^\infty(t_0,T;H^2(0,1))\cap C^0([t_0,T];C^1([0,1]))\cap H^1(t_0,T;H^1(0,1))\cr
\mathcal{W}&\in H^1(t_0,T)\nonumber
\end{align}
\end{subequations}
for all $l\in\{1,\ldots,d\}$ and $m\in\{1,\ldots,d-1\}$ satisfying \Cref{a: ass3}, and \\
\begin{subequations}
\begin{align}
\left(\partial_t\phi_l+I_l(\phi)\partial_z(\Gamma(\phi)v)+\sum_{m=1}^{d-1}\sum_{i,j=0}^1\partial_z^i(B_{lijm}(\phi)\partial_t^kw_m)-G_{\phi,l}(\phi),\psi_{1l}\right)_Q\label{eq: 1weak}\\
\qquad=-\left(\delta_l\partial_z\phi_l,\partial_z\psi_{1l}\right)_Q,\cr
\left(\partial_z(\Gamma(\phi)v)+\sum_{m=1}^{d-1}\sum_{j=0}^1\partial_z(H_{jm}(\phi)\partial_t^kw_m)-G_v(\phi),\psi_2\right)_Q\label{eq: 2weak}\\
\qquad=0,\cr
\left(\partial_tw_m+F_m(\phi)w_m-G_{w,m}(\phi),\psi_{3m}\right)_Q\label{eq: 3weak}\\
+\left(\sum_{j=1}^{d-1}\sum_{\substack{i+n=0\\i,n\geq0}}^1E_{minj}(\phi)\partial_z^i\partial_t^nw_j-D_m\partial_zw_m-\gamma_m\partial_z\partial_tw_m,\psi_{3m}\right)_\Sigma\cr
\qquad=\left(\sum_{j=1}^{d-1}\sum_{\substack{i+n=0\\i,n\geq0}}^1E_{minj}(\phi)\partial_z^i\partial_t^nw_j-D_m\partial_zw_m-\gamma_m\partial_z\partial_tw_m,\partial_z\psi_{3m}\right)_Q,\nonumber
\end{align}
\end{subequations}
where $(f,g)_Q := \int_{t_0}^T\int_0^1f(z,t)g(z,t)\mathrm{d}z\mathrm{d}t$, $(f,g)_\Sigma := \int_{t_0}^Tf(1,t)g(1,t)-f(0,t)g(0,t)\mathrm{d}t$, $\psi_2\in L^2(Q)$ and $\psi_{1l},\psi_{3m}\in H^{1,0}(Q)$, and
\begin{subequations}
\begin{align}
\int_{t_0}^T\left(\left.\partial_z w_m\right|_{z=1}-A_1\left(\left.w_m\right|_{z=1}-\mathcal{W}\right)\right)\Psi_{1m}\mathrm{d}t&=0,\label{eq: 1wmw}\\
\int_{t_0}^T\left(\left.\partial_t w_m\right|_{z=0}-\hat{J}_m\frac{\mathcal{L}(\phi_{m,res}-\left.\phi_m\right|_{z=0})}{H_{1m}\left(\left.\phi\right|_{z=0}\right)}\right)\Psi_{2m}\mathrm{d}t&=0,\label{eq: 0wmw}\\
\text{and}\cr
\int_{t_0}^T\left(\partial_t \mathcal{W}-\left.v_3\right|_{z=1}-\hat{J}_d\frac{\mathcal{L}(\phi_{d,res}-\left.\phi_d\right|_{z=1})}{\Gamma\left(\left.\phi\right|_{z=1}\right)}\right)\Psi_3\mathrm{d}t&=0,\label{eq: heightw}
\end{align}
\end{subequations}
where $\Psi_{1m},\Psi_{2m},\Psi_3\in L^2(t_0,T)$ for all $1\leq m<d$.
\end{theorem}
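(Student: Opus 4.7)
The plan is to upgrade the discrete a priori bounds of \Cref{t: timedomain} into convergence statements for the interpolation functions $\hat{u}_{\Delta t}$ and $\overline{u}_{\Delta t}$, and then pass to the limit $\Delta t\downarrow0$ in the weak form of the discrete system \cref{eq: disc1,eq: disc2,eq: disc3}. As a first step, \Cref{l: 1regul} translates the $\Delta t$-independent bounds of \Cref{t: timedomain} into uniform bounds for $\hat{u}_{\Delta t}$ and $\overline{u}_{\Delta t}$ in the Bochner spaces displayed in the statement: $\phi_l$ in $L^\infty(t_0,T;H^1)\cap L^2(t_0,T;H^2)\cap H^1(t_0,T;L^2)$, $w_m$ in $L^\infty(t_0,T;H^2)\cap H^1(t_0,T;H^1)$, and $v$ in $L^2(t_0,T;H^1)$. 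By the Eberlein--\v{S}mulian theorem (or its weak-$\ast$ variant) applied subsequentially, there exist candidate limits $\phi_l$, $w_m$, $v$ in the respective spaces such that $\overline{u}_{\Delta t}\rightharpoonup u$ along a subsequence.

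Next I would upgrade these to strong convergence using \Cref{l: LAS} together with the compact embeddings in \Cref{t: comp}: since $H^2(0,1)\hookrightarrow\hookrightarrow C^1([0,1])\hookrightarrow H^1(0,1)$ and $H^1(0,1)\hookrightarrow\hookrightarrow C^0([0,1])$, the bounds on $\partial_t \hat{u}_{\Delta t}$ of the third implication in \Cref{l: 1regul} provide the $L^2(t_0,T;Y)$-control needed to conclude that $\overline{\phi}_{\Delta t}\to\phi$ strongly in, e.g., $L^2(t_0,T;C^0([0,1]))$ and similarly $\overline{w}_{\Delta t}\to w$ strongly in $L^2(t_0,T;C^1([0,1]))$. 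Then \Cref{l: 1limits} identifies the limits of $\hat{u}_{\Delta t}$ and $\overline{u}_{\Delta t}$ with one another. Finally, since every coefficient function in \cref{eq: sys1,eq: sys2,eq: sys3} lies in $W^{1,\infty}((0,1)^d)$ and is therefore Lipschitz, uniform $C^0$-strong convergence of $\overline{\phi}_{\Delta t}$ gives $f(\overline{\phi}_{\Delta t})\to f(\phi)$ strongly in $L^2(Q)$ for any such coefficient $f$.

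With these convergences in hand, I would write the weak form of \cref{eq: disc1,eq: disc2,eq: disc3} by testing with $\psi_{1l},\psi_{3m}\in H^{1,0}(Q)$ and $\psi_2\in L^2(Q)$ that are piecewise constant in time on the grid, integrating by parts in space and using \cref{eq: rankhugBCD}, and then letting $\Delta t\downarrow0$. Each term is one of three types: a linear expression in a weakly convergent sequence (passes to the limit immediately), a product of a strongly convergent coefficient $f(\overline{\phi}_{\Delta t})$ with a weakly convergent factor (passes via the standard strong-times-weak product lemma), or a correction caused by the time-shift of coefficients between slice $k-1$ and the unknowns at slice $k$. The time-shift correction is controlled by the second inequality of \Cref{l: 1regul}, which yields $\|\overline{u}_{\Delta t}(t)-\overline{u}_{\Delta t}(t-\Delta t)\|_{L^2(t_0+\Delta t,T;X)}\leq C\sqrt{\Delta t}\to 0$, so that the two time-slice evaluations agree in the limit. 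Density of piecewise-constant-in-time test functions in $L^2$ and $H^{1,0}$ then yields \cref{eq: 1weak,eq: 2weak,eq: 3weak} for all admissible $\psi$.

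The boundary conditions \cref{eq: 1wmw,eq: 0wmw,eq: heightw} are obtained via \Cref{l: bound}: the regularity $w_m\in H^{2,1}(Q)$ and $\phi_l\in H^{2,1}(Q)$ is exactly what \Cref{t: trace} and \Cref{l: bound} need to produce $L^2(t_0,T)$ traces of $\partial_z w_m|_{z=1}$, $\partial_t w_m|_{z=0}$ and $\phi|_{z=0,1}$, while the strong trace convergence of \Cref{l: bound} combined with Lipschitz continuity of $H_{1m}$, $\Gamma$ and $\mathcal{L}$ allows passage to the limit in the nonlinear boundary fluxes. I expect the main obstacle to be the division by $H_{1m}(\phi|_{z=0})$ and $\Gamma(\phi|_{z=1})$ in \cref{eq: 0wmw,eq: heightw}: closing this requires the uniform lower bounds $H_{1m}(\phi)\geq H_{\phi_{\min}}>0$ and $\Gamma(\phi)\geq \Gamma_{\phi_{\min}}>0$ supplied by \Cref{a: ass4} in combination with \Cref{a: ass3}, so that $1/H_{1m}(\overline{\phi}_{\Delta t}|_{z=0})\to 1/H_{1m}(\phi|_{z=0})$ in $L^\infty(t_0,T)$ and the product with the weakly convergent $\mathcal{L}(\phi_{m,res}-\overline{\phi}_{\Delta t}|_{z=0})$ passes to the limit cleanly.
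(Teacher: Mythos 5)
Your proposal follows the same route as the paper: uniform bounds via \Cref{l: 1regul} and \Cref{t: timedomain}, weak subsequential limits via Eberlein--\v{S}mulian, strong limits via \Cref{l: LAS} with the compact embeddings of \Cref{t: comp}, identification of the $\hat{u}$ and $\overline{u}$ limits via \Cref{l: 1limits}, passage to the limit in the discrete weak form using strong-times-weak products, and the lateral boundary conditions via \Cref{t: trace} and \Cref{l: bound}, with \Cref{a: ass4} (together with \Cref{a: ass3}) supplying the lower bounds that make the divisions by $H_{1m}$ and $\Gamma$ Lipschitz. One point is slightly off and worth correcting: you claim strong convergence of $\overline{\phi}_{\Delta t}$ only in $L^2(t_0,T;C^0([0,1]))$, whereas the paper uses $L^2(t_0,T;C^1([0,1]))$ --- which is available from the $L^2(t_0,T;H^2)\cap H^1(t_0,T;L^2)$ a priori bounds and the compact embedding $H^2\hookrightarrow\hookrightarrow C^1$ that you yourself list. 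The $C^1$ version is genuinely what one needs: terms such as $I_l(\phi)\partial_z(\Gamma(\phi)v)=I_l(\phi)\Gamma'(\phi)\partial_z\phi\cdot v+I_l(\phi)\Gamma(\phi)\partial_z v$ and $\partial_z(B_{lijm}(\phi)\partial_t^jw_m)$ put $\partial_z\phi$ next to the only weakly convergent factor $v$, so with merely $C^0$ strong convergence of $\phi$ one faces a product of two weakly convergent sequences, which does not pass to the limit. A further small slip: the time-shift estimate from the third implication of \Cref{l: 1regul} is of order $\Delta t$ (not $\sqrt{\Delta t}$); either vanishes, but \Cref{l: LAS} explicitly requires $\alpha=1$, so the rate matters if you also intend to invoke it there.
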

\begin{proof}
\Cref{l: 1regul,l: LAS,l: 1limits}, \Cref{t: comp,t: timedomain}, and the Eberlein-Smulian theorem show that \Cref{a: ass3} is valid for a time domain $(t_0,T)$ and an upper bound $V>0$. Moreover, they show for this time domain that there exists a subsequence of $(\Delta t)$ converging to 0 such that both the linear and nonlinear terms of \Cref{eq: disc1,eq: disc2,eq: disc3} converge weakly to \Cref{eq: 1weak,eq: 2weak,eq: 3weak}, since both $\hat{\phi}_{l,\Delta t}$ and $\overline{\phi}_{l,\Delta t}$ converge strongly in $L^2(t_0,T;C^1(0,1))$ to $\phi_l$ for $l\in\{1,\ldots,d\}$, and all other variables have the necessary weak convergence.\\
The boundary conditions \cref{eq: 1wmw,eq: 0wmw,eq: heightw} follow immediately from \Cref{t: trace} together with \Cref{l: LAS,t: comp,l: bound} applied to \Cref{eq: rankhugBCD,eq: phiBCD}, since $\mathcal{L}(\cdot)\in W^{1,\infty}(0,1)$ and \Cref{a: ass4} imply $\mathcal{L}(\cdot)/H_{1m}(\cdot),\mathcal{L}(\cdot)/\Gamma(\cdot)\in W^{1,\infty}(0,1)$.
\end{proof}
\section{Conclusion}
We have proven the existence of physical weak solutions of the continuous system given by \Cref{eq: sys1,eq: sys2,eq: sys3} on the domain $[t_0,T]\times(0,1)$ with boundary conditions \cref{eq: phiBC,eq: rankhugBC} and initial conditions \cref{eq: init} and satisfying \Cref{a: ass1,a: ass4,a: ass5,a: ass6,a: ass7,a: ass9} by applying the Rothe method to the time discrete system given by \Cref{eq: disc1,eq: disc2,eq: disc3} with boundary conditions \cref{eq: phiBC,eq: rankhugBC} and initial conditions \cref{eq: init}, when the time interval size $T-t_0$ and a velocity $V$ are chosen such that $(T-t_0,V)\in S\cap\mathcal{R}\neq\emptyset$ with $S$ and $\mathcal{R}$ respectively as introduced in \Cref{l: tint,l: 1Vbound}.
\section*{Acknowledgments}$\;$\\
We acknowledge NWO for the MPE grant 657.000.004. Furthermore, we thank T. Aiki (Tokyo) and J. Zeman (Prague) for their discussions and contributions.
\appendix
\section{Derivation of quadratic inequalities}$\;$\\
\label{s: app0}
The discrete Gronwall inequalities, as stated in \Cref{l: 1gron,l: 1gron2}, apply to quadratic inequalities only. By testing the discrete system with suitable test functions one can obtain these quadratic inequalities with a cubic term. These cubic terms can often be transformed into quadratic terms by partial integration and application of \Cref{a: ass3}, which allows \Cref{l: 1gron} to be applied. However, in a single case we cannot transform the cubic term into quadratic terms. In this case the cubic term can be modified to fit the framework of \Cref{l: 1gron2}.\\
$\;$\\
We test \Cref{eq: disc1} successively with $\phi_l^k$ and $\mathcal{D}_{\Delta t}^k(\phi_l)$, which gives us, with use of $\partial_z\phi_l^k(0)=\partial_z\phi_l^k(1)=0$,
\begin{multline}
\label{eq: 1aprio1}
\frac{1}{2}\left[\mathcal{D}_{\Delta t}^k\left(\|\phi_l\|_{L^2}^2\right)+\Delta t\left\|\mathcal{D}_{\Delta t}^k\left(\phi_l\right)\right\|_{L^2}^2\right]+\delta_l\left\|\partial_z\phi_l^k\right\|_{L^2}^2\cr
\leq 2I_l\Gamma\sum_{n\neq d-1}\|\partial_z\phi_n^{k-1}\|_{L^2}\|v^{k-1}\|_{L^2}+I_l\Gamma\left\|\partial_z v^{k-1}\right\|_{L^2}\left\|\phi_l^k\right\|_{L^2}+G_{\phi,l}\left\|\phi_l^k\right\|_{L^2}\cr
+\sum_{m=1}^{d-1}\left[\sum_{i=0}^1\left(B_{li0m}\|\partial_z^iw_m^{k-1}\|_{L^2}+B_{li1m}\left\|\mathcal{D}_{\Delta t}^k\left(\partial_z^iw_m\right)\right\|_{L^2}\right)\left\|\phi_l^k\right\|_{L^2}\right.\cr
\left.+2\sum_{n\neq d-1}\|\partial_z\phi_n^{k-1}\|_{L^2}\left(B_{l10m}\|w_m^{k-1}\|_{L^2}+B_{l11m}\left\|\mathcal{D}_{\Delta t}^k\left(w_m\right)\right\|_{L^2}\right)\right]
\end{multline}
and
\begin{multline}
\label{eq: 1aprio2}
\left\|\mathcal{D}_{\Delta t}^k\left(\phi_l\right)\right\|_{L^2}^2+\frac{\delta_l}{2}\left[\mathcal{D}_{\Delta t}^k\left(\left\|\partial_z\phi_l\right\|_{L^2}^2\right)+\Delta t\left\|\mathcal{D}_{\Delta t}^k\left(\partial_z\phi_l\right)\right\|_{L^2}^2\right]\cr
\leq I_l\Gamma\left[2\!\!\sum_{n\neq d-1}\!\!\|\partial_z\phi_n^{k-1}\|_{L^2}\|v^{k-1}\|_{L^\infty}+\|\partial_zv^{k-1}\|_{L^2}\right]\left\|\mathcal{D}_{\Delta t}^k\left(\phi_l\right)\right\|_{L^2}
+G_{\phi,l}\left\|\mathcal{D}_{\Delta t}^k\left(\phi_l\right)\right\|_{L^2}\cr
+\sum_{m=1}^{d-1}\left[\sum_{i=0}^1\left(B_{li0m}\|\partial_z^iw_m^{k-1}\|_{L^2}+B_{li1m}\left\|\mathcal{D}_{\Delta t}^k\left(\partial_z^iw_m\right)\right\|_{L^2}\right)\left\|\mathcal{D}_{\Delta t}^k\left(\phi_l\right)\right\|_{L^2}\right.\cr
\left.+2\sum_{n\neq d-1}\|\partial_z\phi_n^{k-1}\|_{L^2}\left(B_{l10m}\|w_m^{k-1}\|_{L^\infty}+B_{l11m}\left\|\mathcal{D}_{\Delta t}^k\left(w_m\right)\right\|_{L^\infty}\right)\left\|\mathcal{D}_{\Delta t}^k\left(\phi_l\right)\right\|_{L^2}\right].
\end{multline}
Furthermore, we test \Cref{eq: disc3} successively with $w_m^k$ and $\mathcal{D}_{\Delta t}^k(w_m)$, in the evaluation of which we use the following results, derived from
\begin{enumerate}
\item the first boundary condition of \cref{eq: rankhugBCD}, which yields the estimate
\begin{equation}
\left|\mathcal{D}_{\Delta t}^k(w_m(0))\right|\leq\frac{\hat{J}_m\phi_{m,res}}{H_{\phi_{\min}}}\qquad\Rightarrow\qquad\left|w_m^k(0)\right|\leq\frac{\hat{J}_m\phi_{m,res}}{H_{\phi_{\min}}}(t_k-t_0)\nonumber
\end{equation}
\item the second boundary condition of \cref{eq: rankhugBCD}, leading to
\begin{multline}
\left|\partial_zw_m^k(1)\right|\leq|A_m|\left|w_m^k(1)-\mathcal{W}^k\right|\cr
\leq|A_m|\left(\frac{\hat{J}_m\phi_{m,res}}{H_{\phi_{\min}}}(t_k-t_0)+\|\partial_zw_m^k\|_{L^2}+|\mathcal{W}^k|\right)\nonumber
\end{multline}
\item the third boundary condition of \cref{eq: rankhugBCD}, which gives us
\begin{equation}
|v^k(z)|\leq\|\partial_zv^k\|_{L^2}
\end{equation}
\item the fourth boundary condition of \cref{eq: rankhugBCD}, resulting in
\begin{subequations}
\begin{align}
\left|\mathcal{D}_{\Delta t}^k(\mathcal{W})\right|&\leq \left\|\partial_z v^{k-1}\right\|_{L^2}+\frac{\hat{J}_d\phi_{d,res}}{\Gamma_{\phi_{\min}}}\label{eq: 1aprio6a}\qquad\text{and}\\
|\mathcal{W}^k|&\leq |\mathcal{W}^0|+\sum_{n=0}^{k-1}\left\|\partial_z v^{n}\right\|_{L^2}\Delta t+\frac{\hat{J}_d\phi_{d,res}}{\Gamma_{\phi_{\min}}}(t_k-t_0)\label{eq: 1aprio6b}\\
&\leq |\mathcal{W}^0|+V\sqrt{t_k-t_0}+\frac{\hat{J}_d\phi_{d,res}}{\Gamma_{\phi_{\min}}}(t_k-t_0).\nonumber
\end{align}
\end{subequations}
\item a first integral of \cref{eq: disc3}, written in short hand notation as
\begin{equation}
\mathcal{D}_{\Delta t}^k(w_m)-\partial_z\mathbb{S}_m^k=G_{w,m}(\phi^{k-1})-F_m(\phi^{k-1})v^{k-1},\nonumber
\end{equation}
where
\begin{subequations}
\begin{align}
\mathbb{S}_m^k&=\mathbb{S}_{m0}^k+\mathbb{S}_{m1}^k\cr
\mathbb{S}_{m0}^k&=\sum_{j=1}^{d-1}\left[E_{m00j}(\phi^{k-1})w_j^{k-1}+E_{m01j}\mathcal{D}_{\Delta t}^k(w_j)\right]\cr
\mathbb{S}_{m1}^k&=D_m\partial_zw_m^k+\gamma_m\mathcal{D}_{\Delta t}^k(\partial_zw_m)-\sum_{j=1}^{d-1}E_{m10j}(\phi^{k-1})\partial_zw_j^{k-1},\nonumber
\end{align}
\end{subequations}
stating that
\begin{equation}
\mathbb{S}_m^k(1)=\mathbb{S}_m^k(0)+\int_0^1\left(\mathcal{D}_{\Delta t}^k(w_m)-G_{w,m}(\phi^{k-1})+F_m(\phi^{k-1})v^{k-1}\right)\mathrm{d}z.\nonumber
\end{equation}
With this and the preceding results, we notice that $\mathbb{S}_{m0}^k(0)$, $\mathbb{S}_{m1}^k(1)$ and $\mathbb{S}_{m}^k(1)-\mathbb{S}_{m}^k(0)$ are "known" (i.e. can be expressed in known constants and the $L^2$-norms of the variables and their $\partial_z$-derivatives). All this gives us for the stockterm $\left.(\mathbb{S}_{m}^kw_m^k)\right|_0^1$, occurring in the equation resulting from the test of \cref{eq: disc3} with $w_m^k$,
\begin{multline}
\left.(\mathbb{S}_{m}^kw_m^k)\right|_0^1\!=\! \left[\mathbb{S}_{m}^k(1)-\mathbb{S}_{m}^k(0)\right]w_m^k(0)+\left[\mathbb{S}_{m0}^k(0)+\mathbb{S}_{m1}^k(1)\right]\!\!\left[w_m^k(1)-w_m^k(0)\right]\cr
+\left[\mathbb{S}_{m0}^k(1)-\mathbb{S}_{m0}^k(0)\right]\left[w_m^k(1)-w_m^k(0)\right],\nonumber
\end{multline}
where
\begin{subequations}
\begin{align}
\left|\mathbb{S}_{m0}^k(1)-\mathbb{S}_{m0}^k(0)\right|&\leq\sum_{j=1}^{d-1}\left[\vphantom{\frac{\phi}{\phi}}\left|E_{m00j}(\phi^{k-1}(1))\left(w_j^k(1)-w_j^k(0)\right)\right|\right.\cr
&\vphantom{a}\qquad\qquad+\left|\left(E_{m00j}(\phi^{k-1}(1))-E_{m00j}(\phi^{k-1}(0))\right)w_j^k(0)\right|\cr
&\vphantom{a}\qquad\qquad+\left|E_{m01j}(\phi^{k-1}(1))\left(\mathcal{D}_{\Delta t}^k(w_j(1))-\mathcal{D}_{\Delta t}^k(w_j(0))\right)\right|\cr
&\vphantom{a}\qquad\qquad\left.+\left|\left(E_{m01j}(\phi^{k-1}(1))-E_{m01j}(\phi^{k-1}(0))\right)\mathcal{D}_{\Delta t}^k(w_j(0))\right|\vphantom{\frac{\phi}{\phi}}\right]\cr
&\leq\sum_{j=1}^{d-1}\left[E_{m00j}\left(\|\partial_zw_j^k\|_{L^2}+\frac{\hat{J}_m\phi_{m,res}}{H_{\phi_{\min}}}(t_k-t_0)\right)\right.\cr
&\vphantom{a}\qquad\qquad\left.+E_{m01j}\left(\|\mathcal{D}_{\Delta t}^k(\partial_zw_j)\|_{L^2}+\frac{\hat{J}_m\phi_{m,res}}{H_{\phi_{\min}}}\right)\right],\nonumber
\end{align}
\end{subequations}
\end{enumerate}
and with use of the fundamental theorem of calculus to rewrite the boundary terms. Notice that this is justified by \Cref{t: 1dim}, which guarantees the existence of an absolutely continuous representative satisfying the fundamental theorem of calculus.
\begin{theorem}
\label{t: 1dim}
Let $I$ be an open, but possibly unbounded, interval. Let $u\in L^p(I)$ with $p\in(1,\infty]$, then $u\in W^{1,p}(I)$ iff $u$ is of bounded variation, i.e., there exists a constant $C$ such that for all $\phi\in C_c^1(I)$\footnote{We follow the notation of Brezis in \cite{Brezis2010} by introducing $C_c^k(\Omega)$ as the space of compactly supported, $k$ times continuously differentiable functions on $\Omega$.} we have the inequality    $\left|\int_Iu\phi' \right|\leq C\|\phi\|_{L^{p'}(I)}$.
Furthermore, we can take $C = \|u'\|_{L^p(I)}$, \cite{Brezis2010}.\\
Moreover, $u\in W^{1,p}(I)$ with $p\in[1,\infty]$ iff there exists an absolutely continuous representative of $u$ in $L^p(I)$ with a classical derivative in $L^p(I)$, \cite{Brezis2010,Leoni2009}.
\end{theorem}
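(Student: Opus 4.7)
The plan is to establish the two ``iff'' statements separately, following the standard approach of Brezis's Chapter 8. The first equivalence characterizes $W^{1,p}(I)$ for $p \in (1,\infty]$ via a dual-norm inequality, while the second, valid for all $p \in [1,\infty]$, identifies elements of $W^{1,p}(I)$ with absolutely continuous functions having classical derivatives in $L^p$.

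For the first equivalence, the forward direction is immediate: if $u \in W^{1,p}(I)$, then by the definition of weak derivative $\int_I u \phi' = -\int_I u' \phi$ for every $\phi \in C_c^1(I)$, and H\"older's inequality gives $|\int_I u \phi'| \leq \|u'\|_{L^p}\|\phi\|_{L^{p'}}$, so the bound holds with $C = \|u'\|_{L^p}$. For the reverse direction, I would interpret $T(\phi) := -\int_I u \phi'$ as a linear functional on $C_c^1(I)$ bounded in the $L^{p'}$-norm by the hypothesis ($\|T\| \leq C$). Since $p > 1$ forces $p' < \infty$, the space $C_c^1(I)$ is dense in $L^{p'}(I)$, so $T$ extends uniquely to a bounded linear functional on all of $L^{p'}(I)$. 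The Riesz representation theorem, using $(L^{p'})^* \cong L^p$ for $p' \in [1,\infty)$, then produces $g \in L^p(I)$ with $\|g\|_{L^p} \leq C$ such that $-\int_I u \phi' = \int_I g \phi$ for all $\phi \in C_c^1(I)$; that is, $g$ is a weak derivative of $u$ and $u \in W^{1,p}(I)$ with $\|u'\|_{L^p} \leq C$.

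For the second equivalence, the forward direction rests on constructing an absolutely continuous representative. Fix $x_0 \in I$ and define $\tilde{u}(x) := \int_{x_0}^x u'(t)\,dt$. Since $u' \in L^p(I) \subset L^1_{\mathrm{loc}}(I)$, $\tilde{u}$ is absolutely continuous on every compact subinterval, and the Lebesgue differentiation theorem yields $\tilde{u}' = u'$ almost everywhere, so $\tilde{u}' \in L^p(I)$. To identify $u$ and $\tilde{u}$ up to a constant, I would test with $\phi \in C_c^1(I)$: AC integration by parts gives $\int_I \tilde{u} \phi' = -\int_I \tilde{u}' \phi = -\int_I u' \phi = \int_I u \phi'$, whence $\int_I (u - \tilde{u})\phi' = 0$ for all $\phi \in C_c^1(I)$. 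The standard du~Bois-Reymond-type lemma (Brezis Lemma 8.1) then forces $u - \tilde{u}$ to be almost everywhere constant; adjusting $\tilde{u}$ by this constant provides the claimed AC representative with classical derivative in $L^p$. The reverse direction is direct: if $u$ agrees a.e.\ with an AC function $\tilde{u}$ whose a.e.-defined classical derivative lies in $L^p$, then AC integration by parts immediately gives $\int_I u \phi' = \int_I \tilde{u}\phi' = -\int_I \tilde{u}'\phi$ for every $\phi \in C_c^1(I)$, so $\tilde{u}'$ is the weak derivative of $u$ and $u \in W^{1,p}(I)$.

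The main obstacle is conceptual rather than computational: the reverse direction of the first equivalence genuinely fails at $p=1$ because $(L^\infty)^* \supsetneq L^1$, so Riesz representation cannot supply $g \in L^1$; this is exactly why the first part of the statement excludes $p=1$. Conveniently, the AC equivalence does extend to $p=1$ because its proof uses only $u' \in L^1_{\mathrm{loc}}$ and never invokes duality. A secondary care point is handling unbounded $I$: one should exhaust $I$ by an increasing sequence of compact subintervals, verify that the constructed representative $\tilde{u}$ is globally well-defined (independent of the exhaustion up to an additive constant), and check that all test-function identities pass to the limit; this adds no conceptual difficulty but must be carried out for the theorem as stated.
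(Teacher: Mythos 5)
Your proof is correct and follows the standard argument from Brezis's Chapter 8, which is precisely the reference the paper cites for this result (the paper supplies no proof of its own, only the citations to Brezis and Leoni). You handle both directions of both equivalences properly and correctly isolate why the duality/Riesz argument for the reverse direction of the first equivalence requires $p>1$ (so that $p'<\infty$ and $(L^{p'})^*\cong L^p$), while the absolutely-continuous-representative characterization survives at $p=1$ since it only needs $u'\in L^1_{\mathrm{loc}}$.
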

All this results in
\begin{multline}
\label{eq: 1aprio3}
\frac{1}{2}\left[\mathcal{D}_{\Delta t}^k\left(\|w_m\|_{L^2}^2\right)+\Delta t\left\|\mathcal{D}_{\Delta t}^k\left(w_m\right)\right\|_{L^2}^2\right]+D_m\left\|\partial_zw_m^k\right\|_{L^2}^2\cr
+\frac{\gamma_m}{2}\left[\mathcal{D}_{\Delta t}^k\left(\|\partial_zw_m\|_{L^2}^2\right)+\Delta t\left\|\mathcal{D}_{\Delta t}^k\left(\partial_zw_m\right)\right\|_{L^2}^2\right]\cr
\leq L^k_m\|w_m^k\|_{L^2}+M^k_m\|\partial_zw_m^k\|_{L^2}+N^k_m(t_k-t_0)
\end{multline}
and
\begin{multline}
\label{eq: 1aprio4}
\left\|\mathcal{D}_{\Delta t}^k\left(w_m\right)\right\|_{L^2}^2+\gamma_m\left\|\mathcal{D}_{\Delta t}^k\left(\partial_zw_m\right)\right\|_{L^2}^2\cr
+\frac{D_m}{2}\left[\mathcal{D}_{\Delta t}^k\left(\|\partial_zw_m\|_{L^2}^2\right)+\Delta t\left\|\mathcal{D}_{\Delta t}^k\left(\partial_zw_m\right)\right\|_{L^2}^2\right]\cr
\leq L^k_m\left\|\mathcal{D}_{\Delta t}^k\left(w_m\right)\right\|_{L^2}+M^k_m\left\|\mathcal{D}_{\Delta t}^k\left(\partial_zw_m\right)\right\|_{L^2}+N^k_m
\end{multline}
For brevity, we have omitted to point out the spatial domain dependence in the norms. The newly introduced functions $L_m^k$, $M_m^k$ and $N_m^k$, are defined as
\begin{subequations}
\begin{align}
L^k_m&=F_m\|v^{k-1}\|_{L^2}+G_{w,m}\label{eq: Lbound}
\end{align}
\end{subequations}
\begin{subequations}
\begin{align}
M^k_m&=\sum_{j=1}^{d-1}\left[E_{m01j}\left\|\mathcal{D}_{\Delta t}^k\left(w_j\right)\right\|_{L^2}+\sum_{i=0}^1E_{mi0j}\|\partial_z^iw_j^{k-1}\|_{L^2}\right]\label{eq: 1aprio5b}\\
&+\sum_{j=1}^{d-1}\left[E_{m10j}|A_j|\left(\frac{\hat{J}_j\phi_{j,res}}{H_{\phi_{\min}}}(t_k-t_0)+ \|\partial_zw_j^{k-1}\|_{L^2}\right.\right.\cr
&\vphantom{a}\qquad\qquad\qquad\qquad\left.\left.+V\sqrt{t_k-t_0}+\frac{\hat{J}_d\phi_{d,res}}{\Gamma_{\phi_{\min}}}(t_k-t_0)\right)\right]\cr
&+\sum_{j=1}^{d-1}\left[E_{m00j}\left(2\frac{\hat{J}_j\phi_{j,res}}{H_{\phi_{\min}}}(t_k-t_0)+ \|\partial_zw_j^{k-1}\|_{L^2}\right)\right]\cr
&+\sum_{j=1}^{d-1}\left[E_{m01j}\left(2\frac{\hat{J}_j\phi_{j,res}}{H_{\phi_{\min}}}+ \|\mathcal{D}_{\Delta t}^k(\partial_zw_j)\|_{L^2}\right)\right]\cr
&+D_m|A_m|\left(\frac{\hat{J}_m\phi_{m,res}}{H_{\phi_{\min}}}(t_k-t_0)+ \|\partial_zw_m^{k}\|_{L^2}\right.\cr
&\vphantom{a}\qquad\qquad\left.+V\sqrt{t_k-t_0}+\frac{\hat{J}_d\phi_{d,res}}{\Gamma_{\phi_{\min}}}(t_k-t_0)\right)\cr
&+\gamma_m|A_m|\left(\frac{\hat{J}_m\phi_{m,res}}{H_{\phi_{\min}}}+ \|\mathcal{D}_{\Delta t}^k(\partial_zw_m)\|_{L^2}+\|\partial_zv^{k-1}\|_{L^2}+\frac{\hat{J}_d\phi_{d,res}}{\Gamma_{\phi_{\min}}}\right)\cr
&=\mathcal{M}_m^k+\sum_{j=1}^{d-1}\sum_{i=0}^1\left[\mathcal{M}_{mi0j}\|\partial_z^iw_j^{k-1}\|_{L^2}+\mathcal{M}_{mi1j}\left\|\mathcal{D}_{\Delta t}^k(\partial_z^iw_j)\right\|_{L^2}\right]\label{eq: Mbound}\\
&+\mathcal{M}_{mm}\|\partial_zw_m^{k}\|_{L^2}+\mathcal{M}_{vm}\|\partial_zv^{k-1}\|_{L^2}\nonumber
\end{align}
\end{subequations}
\begin{subequations}
\begin{align}
N^k_m&=\left(G_{w,m}+F_m\|\partial_zv^{k-1}\|_{L^2}+\|\mathcal{D}_{\Delta t}^k(w_m)\|_{L^2}\right)\frac{\hat{J}_m\phi_{m,res}}{H_{\phi_{\min}}}\label{eq: 1aprio5c}\\
&= \mathcal{N}_{m0}+\mathcal{N}_{m1}\|\partial_zv^{k-1}\|_{L^2}+\mathcal{N}_{m2}\|\mathcal{D}_{\Delta t}^k(w_m)\|_{L^2}\label{eq: Nbound}
\end{align}
\end{subequations}
Remark that the $L^\infty$ norms are sufficient, since they can be bounded from above by \Cref{a: ass3} or the embedding $H^1(\Omega)\hookrightarrow L^\infty(\Omega)$ for bounded intervals $\Omega$ with embedding constant $C_\infty = \sqrt{|\Omega|+1/|\Omega|}$, which equals $\sqrt{2}$ for our domain $\Omega = (0,1)$.\\
We combine the different quadratic inequalities to create inequalities for which we can apply Gronwall's lemmas \cref{l: 1gron,l: 1gron2}. We add \Cref{eq: 1aprio3,eq: 1aprio4} and apply Young's inequality with a parameter $\eta_{index}>0$, Minkowski's inequality and the $H^1(\Omega)\hookrightarrow L^\infty(\Omega)$ embedding, leading to the new quadratic inequality:
\begin{multline}
\frac{1}{2}\mathcal{D}_{\Delta t}^k\left(\sum_{m=1}^{d-1}\left[\|w_m\|_{L^2}^2+(\gamma_m+D_m)\|\partial_zw_m\|_{L^2}^2\right]\right)\label{eq: wboundsA}\\
+\sum_{m=1}^{d-1}\left[\left(1+\frac{\Delta t}{2}\right)\left\|\mathcal{D}_{\Delta t}^k(w_m)\right\|_{L^2}^2+\left[\gamma_m\left(1+\frac{\Delta t}{2}\right)+D_m\frac{\Delta t}{2}\right]\left\|\mathcal{D}_{\Delta t}^k\left(\partial_zw_m\right)\right\|_{L^2}^2\right]\cr
\leq K_{w0}+\!\!\sum_{m=1}^{d-1}\!\left[\vphantom{\frac{A}{A}} K_{w1m}\|w_m^k\|_{L^2}^2\!+\!K_{w2m}\|\partial_zw_m^k\|_{L^2}^2\!+\!K_{w3m}\|w_m^{k-1}\|_{L^2}^2\!+\!K_{w4m}\|\partial_zw_m^{k-1}\|_{L^2}^2\right.\cr
\left.\vphantom{\frac{A}{A}}+\!K_{w5m}\left\|\mathcal{D}_{\Delta t}^k(w_m)\right\|_{L^2}^2\!+\!K_{w6m}\left\|\mathcal{D}_{\Delta t}^k(\partial_zw_m)\right\|_{L^2}^2\right]+K_{w7}\|v^{k-1}\|_{L^2}^2+K_{w8}\|\partial_zv^{k-1}\|_{L^2}^2,\!\!\!
\end{multline}
Analogously, from \Cref{eq: 1aprio1,eq: 1aprio2}, the latter summed over $l$, we obtain the quadratic inequalities:
\begin{multline}
\mathcal{D}_{\Delta t}^k\left(\|\phi_l\|_{L^2}^2\right)+2\delta_l\|\partial_z\phi_l^k\|_{L^2}^2+\Delta t\left\|\mathcal{D}_{\Delta t}^k(\phi_l)\right\|_{L^2}^2\label{eq: phiboundsA}\\
\leq K_{a\phi0}+K_{a\phi1}\|\partial_zv^{k-1}\|_{L^2}^2+K_{a\phi2l}\|\phi_l^{k}\|_{L^2}^2+\sum_{n\neq d-1}\left[K_{a\phi3ln}\|\partial_z\phi_n^{k-1}\|_{L^2}^2\right]\cr
+\sum_{m=1}^{d-1}\sum_{i=0}^1\left[K_{a\phi(4+i)m}\left\|\partial_z^iw_m^{k-1}\right\|_{L^2}^2+K_{a\phi(6+i)m}\left\|\mathcal{D}_{\Delta t}^k(\partial_z^iw_m)\right\|_{L^2}^2\right],
\end{multline}
and
\begin{multline}
\mathcal{D}_{\Delta t}^k\left(\sum_{l\neq d-1}\|\partial_z\phi_l\|_{L^2}^2\right)+\sum_{l\neq d-1}\left[\frac{2}{\delta_l}\left\|\mathcal{D}_{\Delta t}^k(\phi_l)\right\|_{L^2}^2+\Delta t\left\|\mathcal{D}_{\Delta t}^k\left(\partial_z\phi_l\right)\right\|_{L^2}^2\right]\label{eq: phibounds1A}\\
\leq K_{b\phi0}+K_{b\phi1}\|\partial_zv^{k-1}\|_{L^2}^2+\sum_{l\neq d-1}\!\left[ K_{b\phi2l}\left\|\mathcal{D}_{\Delta t}^k(\phi_l)\right\|_{L^2}^2+K_{b\phi3l}^{k-1}\|\partial_z\phi_l^{k-1}\|_{L^2}^2\right]\cr
+\sum_{m=1}^{d-1}\sum_{i=0}^1\left[K_{b\phi(4+i)m}\left\|\partial_z^iw_m^{k-1}\right\|_{L^2}^2+K_{b\phi(6+i)m}\left\|\mathcal{D}_{\Delta t}^k(\partial_z^iw_m)\right\|_{L^2}^2\right].
\end{multline}
The constants $K_{w\,index}$ are given by
\begin{subequations}
\begin{align}
K_{w0}^k&:=\sum_{m=1}^{d-1}\left[\frac{G_{w,m}^2}{2}\left(\frac{1}{\eta_{1m}}+\frac{1}{\eta_{2m}}\right)+\mathcal{N}_{m0}\left[1+(t_k-t_0)\right]\right.\label{eq: kw0}\\
&+\frac{\mathcal{N}_{m1}^2}{2}\left(\frac{1}{\eta_{\mathcal{N}m11}}+\frac{t_k-t_0}{\eta_{\mathcal{N}m12}}\right)+\frac{\mathcal{N}_{m2}^2}{2}\left(\frac{1}{\eta_{\mathcal{N}m21}}+\frac{t_k-t_0}{\eta_{\mathcal{N}m22}}\right)\cr
&\left.+\frac{(\mathcal{M}_{m}^k)^2}{2}\left(\frac{1}{\eta_{\mathcal{M}m1}}+\frac{1}{\eta_{\mathcal{M}m2}}\right)\right]\cr
K_{w1m}&:=\frac{\eta_{1m}}{2}+\frac{\eta_{Lm1}}{2}\label{eq: kw1},\\
K_{w2m}&:=\frac{\eta_{\mathcal{M}m1}}{2}+\sum_{j=1}^{d-1}\sum_{i=0}^{1}\left(\frac{\eta_{\mathcal{M}mi0j0}}{2}+\frac{\eta_{\mathcal{M}mi1j0}}{2}\right)+\frac{\eta_{\mathcal{M}mm}}{2}\label{eq: kw2}\\
&+\frac{\mathcal{M}_{vm}^2}{2\eta_{\mathcal{M}vm1}}+\mathcal{M}_{mm}-D_m,\cr
K_{w3m}&:=\sum_{j=1}^{d-1}\left[\frac{\mathcal{M}_{j00m}^2}{2\eta_{\mathcal{M}j00m0}}+\frac{\mathcal{M}_{j00m}}{2}\eta_{\mathcal{M}j00m1}\right]\label{eq: kw3},\\
K_{w4m}&:=\sum_{j=1}^{d-1}\left[\frac{\mathcal{M}_{j10m}^2}{2\eta_{\mathcal{M}j10m0}}+\frac{\mathcal{M}_{j10m}}{2}\eta_{\mathcal{M}j10m1}\right]\label{eq: kw4},\\
K_{w5m}&:=\frac{\eta_{2m}}{2}+\frac{\eta_{Lm2}}{2}+\frac{\eta_{\mathcal{N}m21}}{2}+\frac{\eta_{\mathcal{N}m22}}{2}\label{eq: kw5}\\
&+\sum_{j=1}^{d-1}\left[\frac{\mathcal{M}_{j01m}^2}{2\eta_{\mathcal{M}j01m0}}+\frac{\mathcal{M}_{j01m}}{2}\eta_{\mathcal{M}j01m1}\right],\cr
K_{w6m}&:=\frac{\eta_{\mathcal{M}m2}}{2}+\frac{\mathcal{M}_{mm}^2}{2\eta_{\mathcal{M}mm}}+\frac{\mathcal{M}_{vm}^2}{2\eta_{\mathcal{M}vm2}}+\sum_{j=1}^{d-1}\left[\frac{\mathcal{M}_{m01j}}{2\eta_{\mathcal{M}m01j1}}+\sum_{i=0}^1\frac{\mathcal{M}_{mi0j}}{2\eta_{\mathcal{M}mi0j1}}\right]\label{eq: kw6}\\
&+\sum_{j=1}^{d-1}\left[\frac{\mathcal{M}_{j11m}^2}{2\eta_{\mathcal{M}j11m0}}+\frac{\mathcal{M}_{m11j}}{2}\eta_{\mathcal{M}m11j1}+\frac{\mathcal{M}_{j11m}}{2\eta_{\mathcal{M}j11m1}}\right],\cr
K_{w7}&:=\sum_{m=1}^{d-1}\frac{F_m^2}{2}\left(\frac{1}{\eta_{Lm1}}+\frac{1}{\eta_{Lm2}}\right)\label{eq: kw7},\\
K_{w8}&:=\sum_{m=1}^{d-1}\left(\frac{\eta_{\mathcal{N}m11}}{2}+\frac{\eta_{\mathcal{N}m12}}{2}+\frac{\eta_{\mathcal{M}vm1}}{2}+\frac{\eta_{\mathcal{M}vm2}}{2}\right)\label{eq: kw8},
\end{align}
\end{subequations}
where $\eta_{index}>0$ are auxiliary constants found in Cauchy's inequality, while the constants $K_{x\phi\,index}$ for $x\in\{a,b\}$ are equal to
\begin{subequations}
\begin{align}
K_{x\phi0}&:=\left(\delta_{bx}\sum_{l\neq d-1}+\delta_{ax}\max_{l\neq d-1}\right)\frac{G_{\phi,l}^2}{\sigma_{x,l}^2\eta_{xgl}}\label{eq: xkphi0},\\
K_{x\phi1}&:=\left(\delta_{bx}\sum_{l\neq d-1}+\delta_{ax}\max_{l\neq d-1}\right)\left[\frac{I_l^2\Gamma^2}{\sigma_{x,l}^2\eta_{xl}}+2\delta_{ax}\sum_{n\neq d-1}\frac{I_l^2\Gamma^2C_p^2}{\eta_{aln}}\right]\label{eq: xkphi1},\\
K_{x\phi2l}&:=\eta_{xl}+\eta_{xgl}+\sum_{m=1}^{d-1}\left(\eta_{xl00m}+\eta_{xl01m}+\eta_{xl10m}+\eta_{xl11m}\right)\label{eq: xkphi2}\\
&+2\delta_{bx}\sum_{n\neq d-1}\left[\eta_{bln}+\sum_{j=1}^{d-1}\left(\eta_{bl10jn}+\eta_{bl11jn}\right)\right],\cr
K_{a\phi3ln}&:=2\left[\eta_{aln}+\sum_{m=1}^{d-1}\left(\eta_{al10mn}+\eta_{al11mn}\right)\right]\label{eq: xkphi3a},\\
K_{b\phi3l}^{k-1}&:=2\sum_{n\neq d-1}\left[\frac{I_n^2\Gamma^2C_{\infty}^2}{\delta_n^2\eta_{bnl}}\|v^{k-1}\|_{H^1}^2\right.\label{eq: xkphi3b}\\
&\qquad\qquad\left.+\sum_{m=1}^{d-1}\left(\frac{B_{n11m}^2C_{\infty}^2}{\delta_l^2\eta_{bn11ml}}\left\|\mathcal{D}_{\Delta t}^k(w_m)\right\|_{H^1}^2+\frac{B_{n10m}^2C_{\infty}^2}{\delta_l^2\eta_{bn10ml}}\|w_m^{k-1}\|_{H^1}^2\right)\right],\cr
&=K_{\phi3l1}\left\|v^{k-1}\right\|_{H^1}^2+\sum_{m=1}^{d-1}\left[K_{\phi3l2m}\left\|\mathcal{D}_{\Delta t}^k(w_m)\right\|_{H^1}^2+K_{\phi3l3m}\|w_m^{k-1}\|_{H^1}^2\right],\cr
K_{x\phi4m}&:=\left(\delta_{bx}\sum_{l\neq d-1}+\delta_{ax}\max_{l\neq d-1}\right)\left[\frac{B_{l00m}^2}{\sigma_{x,l}^2\eta_{xl00m}}+2\delta_{ax}\sum_{n\neq d-1}\frac{B_{l10m}^2}{\eta_{al10mn}}\right]\label{eq: xkphi4},\\
K_{x\phi5m}&:=\left(\delta_{bx}\sum_{l\neq d-1}+\delta_{ax}\max_{l\neq d-1}\right)\frac{B_{l10m}^2}{\sigma_{x,l}^2\eta_{xl10m}}\label{eq: xkphi5},\\
K_{x\phi6m}&:=\left(\delta_{bx}\sum_{l\neq d-1}+\delta_{ax}\max_{l\neq d-1}\right)\left[\frac{B_{l01m}^2}{\sigma_{x,l}^2\eta_{xl01m}}+2\delta_{ax}\sum_{n\neq d-1}\frac{B_{l11m}^2}{\eta_{al11mn}}\right]\label{eq: xkphi6},\\
K_{x\phi7m}&:=\left(\delta_{bx}\sum_{l\neq d-1}+\delta_{ax}\max_{l\neq d-1}\right)\frac{B_{l11m}^2}{\sigma_{x,l}^2\eta_{xl11m}}\label{eq: xkphi7},
\end{align}
\end{subequations}
where $\sigma_{x,l}$ equals $1$ for $x=a$ and $\delta_l$ for $x=b$ and where $\delta_{ij}$ denotes the Kronecker symbol. Furthermore, we have used the Poincar\'{e} inequality for $v^{k-1}$ with the constant $C_p = |\Omega|=1$.\\
$\;$\\
With \Cref{eq: phibounds1A,eq: phiboundsA,eq: wboundsA} we can bound the $H^1(0,1)$ norms of functions $w_1^k,\ldots,w_{d-1}^k$, $\phi_1^k,\ldots,\phi_{d-2}^k$ and $\phi_d^k$. The $H^1(0,1)$ norm bounds of $\phi_{d-1}^k$ and $v^k$ need to be determined in a different way.\\
$\;$\\
Even though one can use the identity $\sum_{l=1}^{d}\phi_l^k=1$ to obtain a bound for $\|\phi_{d-1}^k\|_{L^2}$, this identity will give only the universal upper bound $1-(d-1)\phi_{\min}$ as we have no knowledge for the lower bounds of $\|\phi_l^k\|_{L^2}$ except for that this value is greater or equal to $\phi_{\min}$. Therefore, we create a differential equation for $\phi_{d-1}$ by applying $\sum_{l=1}^{d}\phi_l^k=1$ to \Cref{eq: disc1}. Then, we test with $\phi_{d-1}^k$ to obtain a non-trivial inequality. Finally, we can apply \Cref{l: 1gron} to obtain an upper bound to the $L^2(0,1)$ norm of $\phi_{d-1}^k$. All this results in
\begin{multline}
\mathcal{D}_{\Delta t}^k\left(\|\phi_{d-1}\|_{L^2}^2\right)+\Delta t\left\|\mathcal{D}_{\Delta t}^k\left(\phi_{d-1}\right)\right\|_{L^2}^2\label{eq: phid}\cr
\leq 2\left\|\phi_{d-1}^k\right\|_{L^2}\sum_{l\neq d-1}\left[2I_l\Gamma\sum_{n\neq d-1}\|\partial_z\phi_n^{k-1}\|_{L^2}\|v^{k-1}\|_{L^\infty}+I_l\Gamma\left\|\partial_z v^{k-1}\right\|_{L^2}\right.\cr
\left.+G_{\phi,l}+\sum_{m=1}^{d-1}\left[\sum_{i=0}^1B_{li0m}\|\partial_z^iw_m^{k-1}\|_{L^2}+B_{li1m}\left\|\mathcal{D}_{\Delta t}^k\left(\partial_z^iw_m\right)\right\|_{L^2}\right]\right]\cr
+4\sum_{l\neq d-1}\sum_{m=1}^{d-1}\left[\sum_{n\neq d-1}\|\partial_z\phi_n^{k-1}\|_{L^2}\left(B_{l10m}\|w_m^{k-1}\|_{L^2}+B_{l11m}\left\|\mathcal{D}_{\Delta t}^k\left(w_m\right)\right\|_{L^2}\right)\right]\cr
+2\left\|\partial_z\phi_{d-1}^k\right\|_{L^2}\sum_{l\neq d-1}\delta_l\left\|\partial_z\phi_l^k\right\|_{L^2}
\end{multline}
Fortunately, we can use $\sum_j\phi_j^k=1$ to obtain directly an upper bound for $\|\partial_z\phi_{d-1}^k\|_{L^2}$.
\begin{equation}
\left\|\partial_z\phi_{d-1}^k\right\|_{L^2}^2\leq(d-1)\sum_{l\neq d-1}\left\|\partial_z\phi_l^k\right\|_{L^2}^2\label{eq: phidz}
\end{equation}
The bounds for $\left\|\partial_z \phi_l^{k}\right\|_{H^1}$ follow from \Cref{t: 1dim} applied to \Cref{eq: disc1} together with the previous upper bounds. The upper bounds for $\left\|\partial_z \phi_l^{k}\right\|_{H^1}$ equal
\begin{subequations}\begin{align}
\left\|\partial_z \phi_l^{k}\right\|_{H^1}^2 &= \left\|\partial_z \phi_l^{k}\right\|_{L^2}^2+\left\|\partial^2_z \phi_l^{k}\right\|_{L^2}^2\cr
\text{with}\qquad\!\!\quad\cr
\left\|\partial^2_z \phi_l^{k}\right\|_{L^2}&\leq\frac{1}{\delta_l}\left[\|\mathcal{D}_{\Delta t}^{k}(\phi_l)\|_{L^2}+I_l\Gamma\|\partial_zv^{k-1}\|_{L^2}\!\vphantom{\sum_{i=0}^1}\right.\cr
&+G_{\phi,l}+2I_l\Gamma C_\infty\|v^{k-1}\|_{H^1}\left(\sum_{n\neq d-1}\|\partial_z\phi_n^{k-1}\|_{L^2}\right)\cr
&+\sum_{m=1}^{d-1}\left[\sum_{i=0}^1\left(B_{li0m}\|\partial_z^iw_m^{k-1}\|_{L^2}+B_{li1m}\|\mathcal{D}_{\Delta t}^k(\partial_z^iw_m)\|_{L^2}\right)\right.\cr
&\left.+\left.2C_\infty\!\!\left(\sum_{n\neq d-1}\|\partial_z\phi_n^{k-1}\|_{L^2}\right)\left(B_{l10m}\|w_m^{k-1}\|_{H^1}+B_{l11m}\|\mathcal{D}_{\Delta t}^k(w_m)\|_{H^1}\right)\right]\right]\nonumber
\end{align}
\end{subequations}
Therefore, the sum $\sum_{j=1}^k\left\|\partial^2_z \phi_l^{j}\right\|_{L^2}^2\Delta t$ can be bounded independent of $\Delta t$.\\
A similar approach to determine bounds for $\|\partial_zw_m^k\|_{H^1}$ leads to a $\Delta t$-independent upper bound for $\|\partial_zw_m^k\|_{H^1}$ itself.\\
$\;$\\
For the $H^1(0,1)$ norm of $v^k$ we will use the previously determined inequalities and apply them to a rewritten version of \Cref{eq: disc2} leading to the following inequality.
\begin{subequations}\begin{align}
\sum_{j=0}^k&\left\|\partial_z v^j\right\|_{L^2}^2\Delta t\cr
&\leq\left\|\partial_z v^0\right\|_{L^2}^2\Delta t+\frac{\Delta t}{\Gamma(\phi^0)^2}\left\|\partial_z\left(\Gamma(\phi^0)v^1\right)\right\|_{L^2}^2\cr
&+\sum_{j=2}^k\frac{\Delta t}{\Gamma_{\phi_{\min}}^2}\!\!\left(\left\|\partial_z\!\!\left(\Gamma(\phi^{j-1})v^j\right)\right\|_{L^2}+C_{\infty}\Gamma\!\!\sum_{l\neq d-1}\!\!\left\|\partial_z \phi_l^{j-1}\right\|_{L^2}\left\|v^{j}\right\|_{H^1}\right)^2\cr
&\leq\left\|\partial_z v^0\right\|_{L^2}^2\Delta t+\frac{d\Delta t}{\Gamma(\phi^0)^2}\left[G_v(\phi^0)^2+\sum_{m=1}^{d-1}H_{1m}(\phi^0)^2\|\mathcal{D}_{\Delta t}^1(\partial_zw_m)\|_{L^2}^2\right]\cr
&+\sum_{j=2}^k\frac{(4d-2)\Delta t}{\Gamma_{\phi_{\min}}^2}\!\!\left(8C_{\infty}^2\Gamma^2(d-1)\left[\sum_{l\neq d-1}\!\!\left\|\partial_z \phi_l^{j-1}\right\|_{L^2}^2\right]\left\|\partial_zv^{j}\right\|_{L^2}^2 +G_v^2\right.\cr
&+\sum_{m=1}^{d-1}\left[H_{0m}^2\|\partial_zw_m^{j-1}\|_{L^2}^2+H_{1m}^2\|\mathcal{D}_{\Delta t}^j(\partial_zw_m)\|_{L^2}^2\vphantom{\sum_{n=1}^{d-1}}\right.\cr
&\left.+\left.4C_\infty^2(d-1)\left(\sum_{n\neq d-1}\|\partial_z\phi_n^{j-1}\|_{L^2}^2\right)\left(H_{0m}^2\|w_m^{j-1}\|_{H^1}^2+H_{1m}^2\|\mathcal{D}_{\Delta t}^j(w_m)\|_{H^1}^2\right)\right]\right)\cr
&\leq Q_{\Delta t}(T_{\Delta t}-t_0,V^2)\nonumber
\end{align}
\end{subequations}
Since all previous upper bounds are strictly increasing and continuous in $T_{\Delta t}-t_0$ and $V^2$, these properties hold as well for $Q_{\Delta t}(T_{\Delta t}-t_0,V^2)$.
\bibliographystyle{siamplain}
\bibliography{references1}
\end{document}